\documentclass[11pt]{article}
\title{Roots of unity and torsion points of abelian varieties}
\usepackage{amsfonts}
\usepackage{amsmath}
\usepackage{amssymb}
\usepackage{amsthm}
\usepackage{a4wide}
\usepackage{todonotes}
\usepackage[utf8]{inputenc}

\newtheorem{theorem}{Theorem}
\numberwithin{theorem}{section}

\newtheorem{lemma}[theorem]{Lemma}

\newtheorem{proposition}[theorem]{Proposition}

\theoremstyle{definition}
\newtheorem{remark}[theorem]{Remark}
\newtheorem{example}[theorem]{Example}
\newtheorem{definition}[theorem]{Definition}

\newcommand{\simb}{\square}
\newcommand{\SpecialFiber}[1]{#1_\ell(\ell)}
\newcommand{\abGal}[1]{\operatorname{Gal}(\overline{#1}/#1)}
\newcommand{\MT}{\operatorname{MT}}
\newcommand{\Hg}{\operatorname{Hg}}
\newcommand{\Smooth}{\mathcal{T}^0}
\newcommand{\StabilizerAlg}{\mathcal{T}}
\newcommand{\Stabilizer}{T}
\newcommand{\OtherStabilizer}{U}
\newcommand{\uguale}{\circeq}


\usepackage[british]{babel}

\usepackage[style]{abstract}
\usepackage[affil-it]{authblk}

\date{}
\begin{document}

\renewcommand{\abstitlestyle}[1]{\center{\small{\textsc{#1}}}}

\author{Davide Lombardo
  \thanks{\texttt{davide.lombardo@math.u-psud.fr}}}
\affil{Département de Mathématiques d'Orsay}
\maketitle

\vspace{-30pt}
\begin{abstract}
We answer a question raised by Hindry and Ratazzi concerning the intersection between cyclotomic extensions of a number field $K$ and extensions of $K$ generated by torsion points of an abelian variety over $K$. 
We prove that the property called $(\mu)$ in \cite{MR2643390} holds for any abelian variety, while the same is not true for the stronger version of the property introduced in \cite{MR2862374}.

\medskip

\end{abstract}

\noindent \textbf{Keywords:} Galois representations, Mumford-Tate conjecture, abelian varieties, algebraic cycles

\noindent \textbf{MSC classes:} 11J95, 11G10, 14K15

\maketitle

\section{Introduction}
In this paper we consider the following problem: given a number field $K$, an abelian variety $A/K$ (of dimension $g$), a prime $\ell$, and a finite subgroup $H$ of $A[\ell^\infty]$, how does the number field $K(H)$ intersect the $\ell$-cyclotomic extension $K(\mu_{\ell^\infty})$? More precisely, is the intersection completely accounted for by the fact that $K(H)$ contains the image of the Weil pairing $H \times H \to \mu_{\ell^\infty}$?
In order to study this question, Hindry and Ratazzi have introduced in \cite{MR2643390} and \cite{MR2862374} two variants of a property they call $(\mu)$, and which we now recall.
We fix a polarization $\varphi:A \to A^\vee$ and, for every $n \geq 0$, we denote by $e_{\ell^n}$ the $\ell^n$-Weil pairing $A[\ell^n] \times A[\ell^n] \to \mu_{\ell^n}$ given by composing the usual Weil pairing $A[\ell^n] \times A^\vee[\ell^n] \to \mu_{\ell^n}$ with the map $A[\ell^n] \to A^\vee[\ell^n]$ induced by $\varphi$. If $H$ is a finite subgroup of $A[\ell^\infty]$ we now set
\[m_1(H)=\max\left\{k\in\mathbb{N}\ |\ \exists n\geq 0, \ \exists P,Q \in H\text{ of order }\ell^n\text{ such that } e_{\ell^n}(P,Q) \text{ generates }\mu_{\ell^k}\right\}.\]

Following \cite{MR2862374} we can then introduce the following definition: 

\begin{definition}\label{def_ms}
We say that $(A/K, \varphi)$ satisfies property $(\mu)_s$ (where ``s" stands for ``strong") if there exists a constant $C>0$, depending on $A/K$ and $\varphi$, such that for all primes $\ell$ and all finite subgroups $H$ of $A[\ell^\infty]$ the following inequalities hold:
\[
\frac{1}{C} [K(\mu_{\ell^{m_1(H)}}):K] \leq [K(H)\cap K(\mu_{\ell^{\infty}}) : K] \leq C [K(\mu_{\ell^{m_1(H)}}):K].
\]
\end{definition}

\begin{remark}
It is easy to see that the choice of the polarization $\varphi$ plays essentially no role, and $(A/K,\varphi)$ satisfies property $(\mu)_s$ for a given $\varphi$ if and only $(A/K,\psi)$ satisfies property $(\mu)_s$ for every polarization $\psi$ of $A/K$ (possibly for different values of the constant $C$); for this reason we shall simply say that $A/K$ satisfies property $(\mu)_s$ when it does for one (hence any) polarization. It is shown in \cite{MR2862374} that if $A/K$ satisfies the Mumford-Tate conjecture and has Mumford-Tate group isomorphic to $\operatorname{GSp}_{2\dim A, \mathbb{Q}}$, then property $(\mu)_s$ holds for $A$.
\end{remark}
We also consider the following variant of property $(\mu)_s$, which we call $(\mu)_w$ (``weak"), and which was first introduced in \cite[Définition 6.3]{MR2643390}:
\begin{definition}
We say that $A$ satisfies property $(\mu)_w$ if the following is true: there exists a constant $C>0$, depending on $A/K$, such that for all primes $\ell$ and all finite subgroups $H$ of $A[\ell^\infty]$ there exists $n \in \mathbb{N}$ (in general depending on $\ell$ and $H$) such that
\begin{equation}\label{eq_WeakMu1}
\frac{1}{C} \left[ K(\mu_{\ell^n}) : K \right] \leq \left[ K(H) \cap K(\mu_{\ell^\infty}) : K \right] \leq C \left[ K(\mu_{\ell^n}) : K \right].
\end{equation}
\end{definition}
Clearly, property $(\mu)_s$ implies property $(\mu)_w$. In this paper we show the following two results:
\begin{theorem}\label{thm_mw}
Let $K$ be a number field and $A/K$ be an abelian variety. Property $(\mu)_w$ holds for $A$.
\end{theorem}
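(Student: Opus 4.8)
The plan is to reduce everything to the case where $H$ is isotropic for the Weil pairing, exploiting that the cyclotomic tower is linearly ordered above its first layer.

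\medskip

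\noindent\textbf{Step 1: a free inclusion from the Weil pairing.} First I would record that $K(\mu_{\ell^{m_1(H)}})\subseteq K(H)$. Indeed, choose $P,Q\in H$ of order $\ell^{n}$ with $e_{\ell^{n}}(P,Q)$ generating $\mu_{\ell^{m_1(H)}}$; any $\sigma\in\operatorname{Gal}(\overline K/K(H))$ fixes $P$ and $Q$, hence fixes $e_{\ell^{n}}(\sigma P,\sigma Q)=\sigma(e_{\ell^{n}}(P,Q))$, i.e.\ a generator of $\mu_{\ell^{m_1(H)}}$. Thus $\mu_{\ell^{m_1(H)}}\subseteq K(H)$ and so $K(\mu_{\ell^{m_1(H)}})\subseteq L:=K(H)\cap K(\mu_{\ell^\infty})$. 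In particular the left inequality of $(\mu)_w$ holds with $C=1$ and $n=m_1(H)$; the whole point will be to find an $n$ (in general strictly larger than $m_1(H)$) for which the right inequality also holds.

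\medskip

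\noindent\textbf{Step 2: the chain structure of the cyclotomic tower.} For $\ell$ odd, $\operatorname{Gal}(K(\mu_{\ell^\infty})/K(\mu_\ell))$ embeds into $1+\ell\mathbb Z_\ell\cong\mathbb Z_\ell$, whose closed subgroups form a chain; hence the subextensions of $K(\mu_{\ell^\infty})/K$ containing $K(\mu_\ell)$ are totally ordered and are exactly the $K(\mu_{\ell^m})$. (The prime $\ell=2$ is analogous above $K(\mu_4)$, and being a single prime it is absorbed into $C$.) Consequently, whenever $\mu_\ell\subseteq K(H)$ — automatic as soon as $m_1(H)\geq1$ by Step 1 — the field $L$ contains $K(\mu_\ell)$ and therefore equals some $K(\mu_{\ell^n})$, so $(\mu)_w$ holds for this pair $(\ell,H)$ with constant $1$. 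This disposes of everything except the case $m_1(H)=0$ with $\mu_\ell\not\subseteq K(H)$, which I treat below; observe that then $H$ is isotropic for the Weil pairing.

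\medskip

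\noindent\textbf{Step 3: reduction to a statement about $[K(H)\cap K(\mu_\ell):K]$.} With $v:=[K(\mu_\ell):K]\geq(\ell-1)/[K:\mathbb Q]$ and $[K(\mu_{\ell^{m+1}}):K(\mu_{\ell^m})]\mid\ell$ for $m\geq1$, the set $\{[K(\mu_{\ell^m}):K]:m\geq0\}$ equals $\{1\}\cup\{v\ell^{j}:j\geq0\}$ up to finitely many repetitions; its consecutive ratios are at most $\ell$ and $v\asymp\ell$. Writing $[L:K]=d\,\ell^{b}$ with $d\mid v$ and comparing with $v\ell^{b}$, with $v\ell^{b-1}$, and with $1$, one checks that $(\mu)_w$ holds for $(\ell,H)$ exactly when $\min(d,\,v/d)$ is bounded, i.e.\ when $[K(H)\cap K(\mu_\ell):K]$ is not of intermediate size (between $O(1)$ and $v/O(1)$). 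So everything reduces to: \emph{for $H$ isotropic, $[K(H)\cap K(\mu_\ell):K]$ is bounded, or else has bounded index in $[K(\mu_\ell):K]$.}

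\medskip

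\noindent\textbf{Step 4: the main obstacle.} I would first reduce to $H$ a maximal isotropic submodule $\Lambda$ of $A[\ell^{h}]$ (with $\ell^{h}$ the exponent of $H$), since $K(H)\subseteq K(\Lambda)$; such $\Lambda$ satisfies $\Lambda^\perp=\Lambda$ for the Weil pairing, and a short computation then shows that any $\sigma\in\operatorname{Gal}(\overline K/K)$ fixing $\Lambda$ pointwise acts on $A[\ell^{h}]/\Lambda$ by the scalar $\chi_\ell(\sigma)$. Hence $[K(\Lambda)\cap K(\mu_{\ell^\infty}):K]$ is precisely the index, inside $\operatorname{Gal}(K(\mu_{\ell^\infty})/K)$, of the image under the similitude (cyclotomic) character of the stabiliser of $\Lambda$ in $\operatorname{Gal}(\overline K/K)$; the claim of Step 3 becomes the assertion that this similitude image has bounded index or else the degree is bounded. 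This is the technical heart, which I expect to establish by localising at the primes $\mathfrak p\mid\ell$: using the $\ell$-adic Hodge theory of $T_\ell A$ at $\mathfrak p$ (passing if necessary to semistable reduction, and using that the Hodge–Tate weights of $T_\ell A$ are $0$ and $1$) one bounds the tame ramification of $K(H)/K$ at $\mathfrak p$ in a way that forces the above dichotomy, while the finitely many small primes $\ell$ (and $\ell=2$) are absorbed into $C$. Combining this with Step 2 yields a single constant $C$ valid for all $\ell$ and all finite $H\subseteq A[\ell^\infty]$.
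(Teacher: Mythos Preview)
Your Steps 1--3 are sound and correctly isolate the heart of the matter: for each $\ell$ and each finite $H\subseteq A[\ell^\infty]$, the degree $[K(H)\cap K(\mu_\ell):K]$ must be either bounded or of bounded index in $[K(\mu_\ell):K]$. This reduction parallels the paper's Lemmas 2.22--2.23, though organized differently. The gap is entirely in Step 4.

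First, your reduction to a maximal isotropic $\Lambda\supseteq H$ via $K(H)\subseteq K(\Lambda)$ does not work. The inclusion only gives $[K(H)\cap K(\mu_\ell):K]\leq[K(\Lambda)\cap K(\mu_\ell):K]$, so if $\Lambda$ lands in the ``bounded index'' branch this says nothing about which branch $H$ is in. (The paper's own counterexample to $(\mu)_s$ exhibits a Lagrangian $\mathcal{H}\subset A[\ell]$ with $[K(\mathcal{H})\cap K(\mu_\ell):K]\geq(\ell-1)/2$, while a generic line inside $\mathcal{H}$ presumably sits in the other branch; both branches genuinely occur for isotropic subgroups, so you cannot collapse to the maximal case.)

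Second, and more seriously, the appeal to $\ell$-adic Hodge theory is not a proof but a hope. The dichotomy you need is a statement about the \emph{global} image $G_\ell\subset\operatorname{GSp}_{2g}(\mathbb F_\ell)$: namely, that the multiplier $\lambda$ restricted to the stabilizer of $H$ in $G_\ell$ has image either bounded or of bounded index in $\mathbb F_\ell^\times$. Local information at $\mathfrak p\mid\ell$ (Hodge--Tate weights, tame ramification) constrains the image of inertia but cannot by itself force such a global dichotomy, since the global image can be much smaller than what local conditions allow. The paper's proof makes essential use of the Serre--Wintenberger structural theorems: $G_\ell$ has bounded index in $\underline{H}_\ell(\mathbb F_\ell)$ for a smooth reductive $\mathbb Z_\ell$-group $\underline{H}_\ell$, the algebraic stabilizer $\mathcal T\subset\underline{H}_\ell$ of $H$ is smooth with a bounded number of components (uniformly in $\ell$ and $H$), hence $\lambda(\mathcal T^0)$ is either trivial or all of $\mathbb G_m$, and Lang's theorem controls the passage to $\mathbb F_\ell$-points. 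This algebraic-group input is what produces the dichotomy, and nothing in your sketch supplies a substitute for it.
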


\begin{theorem}\label{thm_ms}
There exists an abelian fourfold $A$, defined over a number field $K$, such that $\operatorname{End}_{\overline{K}}(A)=\mathbb{Z}$ and for which property $(\mu)_s$ does not hold. More precisely, such an $A$ can be taken to be any member of the family constructed by Mumford in \cite{MR0248146}.
\end{theorem}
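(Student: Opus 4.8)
The plan is to exploit the special shape of the Mumford--Tate group of Mumford's fourfolds. Recall that for such an $A$ there are a totally real cubic field $F$ and a quaternion algebra $D$ over $F$ for which $\MT(A)_{\overline{\mathbb Q}}\cong\mathbb G_m\cdot\operatorname{SL}_2^3$, acting on $H_1(A_{\overline{\mathbb Q}},\mathbb Q)\otimes\overline{\mathbb Q}\cong V_1\otimes V_2\otimes V_3$ (a tensor product of three $2$-dimensional spaces) through scalars times the external tensor product $\rho$ of the three standard representations of $\operatorname{SL}_2$; moreover the Weil pairing of a polarization corresponds, up to a scalar, to $\psi_1\otimes\psi_2\otimes\psi_3$ with $\psi_i$ the standard alternating form on $V_i$, the similitude (hence the mod-$\ell$ cyclotomic) character is $\lambda\cdot\rho(g_1,g_2,g_3)\mapsto\lambda^2$, and $\operatorname{End}_{\overline K}(A)=\mathbb Z$. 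I first note that the left-hand inequality in Definition~\ref{def_ms} is automatic: since the Weil pairing is Galois-equivariant one has $\mu_{\ell^{m_1(H)}}\subseteq K(H)$ for every $H$, whence $[K(\mu_{\ell^{m_1(H)}}):K]\le[K(H)\cap K(\mu_{\ell^\infty}):K]$. So, to disprove $(\mu)_s$, it suffices to violate the right-hand inequality, and the strategy is to fix one suitable prime $\ell$ and produce, for $n\to\infty$, subgroups $H_n\subseteq A[\ell^n]$ with $m_1(H_n)=0$ (so that $[K(\mu_{\ell^{m_1(H_n)}}):K]=1$) but $[K(H_n)\cap K(\mu_{\ell^\infty}):K]\to\infty$.

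To set this up, I would first replace $K$ by a finite extension (which does not affect whether $A$ satisfies $(\mu)_s$) so that the $\ell$-adic monodromy groups of $A$ are connected, and then fix a rational prime $\ell$ that splits completely in $F$, at whose places of $F$ the algebra $D$ is unramified, that does not divide $2\deg\varphi$, and that is large enough for the integral group schemes below to be well behaved; such primes have positive density. For such an $\ell$ there is an isomorphism $T_\ell A\cong L_1\otimes_{\mathbb Z_\ell}L_2\otimes_{\mathbb Z_\ell}L_3$ with each $L_i$ a free rank-$2$ $\mathbb Z_\ell$-module carrying a perfect alternating form $\psi_i$, compatible with the Weil pairing; and, since Hodge classes on abelian varieties are $\ell$-adic Tate classes (Deligne), the image of $\abGal{K}$ in $\operatorname{GL}(T_\ell A)$ is contained in $\MT(A)(\mathbb Z_\ell)=\mathbb G_m\cdot(\operatorname{SL}(L_1)\times\operatorname{SL}(L_2)\times\operatorname{SL}(L_3))$. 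Only this easy inclusion is used, not the Mumford--Tate conjecture.

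Fix $\mathbb Z_\ell$-bases $\{e_i,f_i\}$ of the $L_i$ and, for $n\ge1$, let $H_n\subseteq A[\ell^n]$ be generated by the reductions modulo $\ell^n$ of $e_1\otimes e_2\otimes e_3$, $f_1\otimes e_2\otimes e_3$, $e_1\otimes f_2\otimes e_3$ and $e_1\otimes e_2\otimes f_3$. Two facts then remain. First, $m_1(H_n)=0$: the $\ell^n$-Weil pairing is, up to a unit scalar, the reduction modulo $\ell^n$ of $\psi_1\otimes\psi_2\otimes\psi_3$, and for any two of the four chosen generators there is a tensor slot in which both arguments equal $e_i$, or both equal $f_i$, so the corresponding $\psi_i$ contributes $0$; hence the pairing vanishes identically on $H_n$ and $m_1(H_n)=0$. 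Second --- and this is the crux --- $K(H_n)=K(A[\ell^n])$; since the Galois image lies in $\MT(A)(\mathbb Z_\ell)$, this reduces to the purely algebraic assertion that the pointwise stabiliser in $\MT(A)(\mathbb Z/\ell^n)$ of those four points is trivial. One checks this by writing such a stabilising element --- after a harmless quadratic base change splitting the central isogeny $\mathbb G_m\times\operatorname{SL}_2^3\to\MT(A)$ --- as $\lambda\cdot\rho(g_1,g_2,g_3)$: fixing $e_1\otimes e_2\otimes e_3$ forces $g_ie_i=a_ie_i$ for units $a_i$ with $\lambda a_1a_2a_3=1$, so each $g_i$ is upper triangular with respect to $\{e_i,f_i\}$; fixing $f_1\otimes e_2\otimes e_3$ then forces $a_1^2=1$, hence $a_1=\pm1$ (as $\ell$ is odd), and the off-diagonal entry of $g_1$ to vanish, that is $g_1=\pm\operatorname{id}$; by symmetry $g_2=g_3=\pm\operatorname{id}$; and then $\lambda\cdot\rho(g_1,g_2,g_3)=\lambda\cdot(\pm1)\cdot\operatorname{id}=\operatorname{id}$.

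Granting these, the theorem follows: the $\ell^n$-Weil pairing is surjective onto $\mu_{\ell^n}$ (because $\ell\nmid\deg\varphi$), so $K(H_n)\cap K(\mu_{\ell^\infty})=K(A[\ell^n])\cap K(\mu_{\ell^\infty})\supseteq K(\mu_{\ell^n})$ and therefore
\[
[K(H_n)\cap K(\mu_{\ell^\infty}):K]\ \ge\ [K(\mu_{\ell^n}):K]\ \ge\ \frac{\varphi(\ell^n)}{[K:\mathbb Q]}\ \xrightarrow[n\to\infty]{}\ \infty,
\]
while $[K(\mu_{\ell^{m_1(H_n)}}):K]=1$ for every $n$; hence no constant $C$ can make the right-hand inequality of Definition~\ref{def_ms} hold for all pairs, and $(\mu)_s$ fails for $A$. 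The delicate point is the stabiliser computation: one has to argue over the ring $\mathbb Z/\ell^n$ rather than over a field, to keep track of the central isogeny $\mathbb G_m\times\operatorname{SL}_2^3\to\MT(A)$, and to verify that these four points really do pin the group element down to the identity --- the underlying mechanism being that, because the polarization form factors as $\psi_1\otimes\psi_2\otimes\psi_3$, one may choose $H_n$ at once totally isotropic (which gives $m_1(H_n)=0$) and rigid in each of the three tensor directions.
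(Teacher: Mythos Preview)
Your argument is correct and follows the same underlying idea as the paper --- exploit the tensor-product description of $\MT(A)$ to find an isotropic subgroup with tiny stabiliser --- but the two proofs differ in several technical choices that are worth noting.

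\medskip

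\textbf{Choice of isotropic subspace.} The paper uses the Lagrangian spanned by $e_{111},e_{122},e_{212},e_{221}$ (indices with an even number of~$2$'s) and computes its stabiliser in $M(\overline{F})$ directly from the block structure of $8\times 8$ tensor matrices, finding it has order~$2$. You instead take $e_{111},e_{211},e_{121},e_{112}$ (at most one $f$-factor), and argue via the parametrisation $(\lambda,g_1,g_2,g_3)\mapsto\lambda\cdot g_1\otimes g_2\otimes g_3$; your subspace has \emph{trivial} stabiliser, which is marginally cleaner.

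\textbf{Variation.} The paper lets $\ell$ range over an infinite set of split primes and takes $H\subset A[\ell]$ each time; the contradiction comes from $(\ell-1)/2\to\infty$. You fix a single good $\ell$ and let $n\to\infty$, taking $H_n\subset A[\ell^n]$; the contradiction comes from $[K(\mu_{\ell^n}):K]\to\infty$. Your route needs only one prime but forces the stabiliser computation over the Artinian ring $\mathbb{Z}/\ell^n$ rather than a field; the paper's route is content with fields but must verify that infinitely many primes are ``good enough''.

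\textbf{A point to tighten.} Your phrase ``harmless quadratic base change splitting the central isogeny'' is not quite the right mechanism: the kernel of $\mathbb{G}_m\times\operatorname{SL}_2^3\to\MT(A)$ is $(\mathbb{Z}/2)^3$, and $H^1(\mathbb{F}_\ell,\mathbb{Z}/2)$ does not die after a single quadratic extension. What does work is to base-change to $W(\overline{\mathbb{F}_\ell})/\ell^n$ (a henselian local ring with algebraically closed residue field): the isogeny is \'etale for odd~$\ell$, hence smooth, so it is surjective on points there, and your computation then shows that any stabilising element maps to the identity in $\operatorname{GL}_8$ over that ring, hence already over $\mathbb{Z}/\ell^n$. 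Similarly, the assertion that $T_\ell A\cong L_1\otimes L_2\otimes L_3$ \emph{integrally} deserves a word of justification (the paper handles the analogous issue by appealing to smoothness of the relevant $\mathbb{Z}_\ell$-models and equality of Zariski closures); this holds for all $\ell$ outside a finite set, which suffices for your purposes.
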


The most surprising feature of the counterexample given by theorem \ref{thm_ms} is the condition $\operatorname{End}_{\overline{K}}(A)=\mathbb{Z}$. Indeed, one is easily led to suspect that the possible failure of property $(\mu)_s$ is tied to the presence of additional endomorphisms, as the following two examples show; theorem \ref{thm_ms}, however, demonstrates that $(\mu)_s$ can fail even in the favorable situation when $A$ has no extra endomorphisms. Notice however that an $A$ as in theorem \ref{thm_ms} has the property that $A^2$ supports ``exceptional" Tate classes, cf.~\cite{Moonen95hodgeand}, so the failure of property $(\mu)_s$ in this case can be understood in terms of the existence of certain algebraic cycles in the cohomology of $A$ which do not correspond to endomorphisms.

\begin{example} Property $(\mu)_s$ does not hold for abelian varieties of CM type. Indeed, let $A/K$ be an abelian variety of dimension $g$ admitting complex multiplication (over $K$) by an order $R$ in the ring of integers of the CM field $E$. Let $\ell$ be a prime that splits completely in $E$ and does not divide the index $[\mathcal{O}_E : R]$: we then have $R \otimes \mathbb{Z}_\ell \cong \mathcal{O}_E \otimes \mathbb{Z}_\ell \cong \mathbb{Z}_\ell^{2g}$, and by the theory of complex multiplication the action of $\operatorname{Gal}\left(\overline{K}/K \right)$ on $T_\ell(A)$ factors through $(R \otimes \mathbb{Z}_\ell)^\times$. It follows that in suitable coordinates the action of $\operatorname{Gal}\left(\overline{K}/K \right)$ on $A[\ell^n]$ is through diagonal matrices in $\operatorname{GL}_{2g}(\mathbb{Z}/\ell^n\mathbb{Z})$. Let now $P$ be the $\ell^n$-torsion point of $A$ which, in these coordinates, is represented by the vector $(1,\ldots,1)$. By our choice of coordinates, the Galois group of $K(A[\ell^n])$ over $K(P)$ is contained in
\[
\left\{ \sigma =\left(\begin{matrix} \sigma_{1,1} \\ & \sigma_{2,2} \\ & & \ddots \\ & & & \sigma_{2g,2g} \end{matrix} \right) \in \operatorname{GL}_{2g}\left(\mathbb{Z}/\ell^n\mathbb{Z} \right) \bigm\vert \sigma \cdot \left( \begin{matrix} 1 \\ \vdots \\ 1 \end{matrix}\right)=\left( \begin{matrix} 1 \\ \vdots \\ 1 \end{matrix}\right) \right\},
\]
a group which is clearly trivial: in other words, we have $K(P)=K(A[\ell^n])$. Let now $H$ be the group generated by $P$. It is clear that $m_1(H)=0$, because $H$ is cyclic, but on the other hand $K(H)=K(P)=K(A[\ell^n])$ contains a primitive $\ell^n$-th root of unity: since there are infinitely many primes $\ell$ satisfying our assumptions, this clearly contradicts property $(\mu)_s$ for $A$. In particular, this shows that in general property $(\mu)_s$ does not hold even if we restrict to the case of $H$ being cyclic.
\end{example}

\begin{example}\label{ex_SelfProducts}Property $(\mu)_s$ does not hold for self-products (this example has been pointed out to the author by Antonella Perucca). Let $B/K$ be any abelian variety and $P,Q$ be points of $B[\ell^n]$ such that $e_{\ell^n}(P,Q)$ generates $\mu_{\ell^n}$. Consider now $A=B^2$ and $H=\langle (P,Q)\rangle$: clearly $m_1(H)=0$ since $H$ is cyclic, but $K(H)=K(P,Q)$ contains a root of unity of order $\ell^n$, which contradicts property $(\mu)_s$ for $A$ when $n$ is large enough. In particular, choosing for $B$ an abelian variety which satisfies property $(\mu)_s$ (for example an elliptic curve without CM, cf.~\cite{MR2643390}), this shows that $(\mu)_s$ needs not hold for a product when it holds for the single factors.
\end{example}



\section{Property $(\mu)_w$}
\subsection{Preliminaries}
We fix once and for all an embedding of $\overline{\mathbb{Q}}$ into $\mathbb{C}$, and consider the number field $K$ as a subfield of $\overline{\mathbb{Q}} \subseteq \mathbb{C}$. The letter $A$ denotes a fixed abelian variety over $K$; if $\ell$ is a prime number and $n$ is a positive integer, we write $G_{\ell^n}$ for the Galois group of $K(A[\ell^n])/K$ and $G_{\ell^\infty}$ for the Galois group of $K(A[\ell^\infty])/K$. Finally, we take the following definition for the Mumford-Tate group of $A$:
\begin{definition}
Let $K$ be a number field and $A/K$ be an abelian variety. Let $V$ be the $\mathbb{Q}$-vector space $H_1(A(\mathbb{C}),\mathbb{Q})$, equipped with its natural Hodge structure of weight $-1$. Also let $V_\mathbb{Z}=H_1(A(\mathbb{C}),\mathbb{Z})$, write $\mathbb{S}:=\operatorname{Res}_{\mathbb{C}/\mathbb{R}}\left(\mathbb{G}_{m,\mathbb{C}} \right)$ for Deligne's torus, and let $h:\mathbb{S} \rightarrow \operatorname{GL}_{V \otimes \mathbb{R}}$ be the morphism giving $V$ its Hodge structure. We define $\MT(A)$ to be the $\mathbb{Q}$-Zariski closure of the image of $h$ in $\operatorname{GL}_V$, and extend it to a scheme over $\mathbb{Z}$ by taking its $\mathbb{Z}$-closure in $\operatorname{GL}_{V_{\mathbb{Z}}}$.
\end{definition}
\begin{remark}
Taking the $\mathbb{Z}$-Zariski closure in the previous definition allows us to consider points of $\MT(A)$ with values in arbitrary rings. It is clear that the Mumford-Tate group of $A$, even in this integral version, is insensitive to field extensions of $K$: indeed, it is defined purely in terms of data that can be read off $A_\mathbb{C}$, namely its Hodge structure and its integral homology.
Notice that $\MT(A)_{\mathbb{Q}}$, being an algebraic group over a field of characteristic 0, is smooth by Cartier's theorem. It follows that $\MT(A)$ is smooth over an open subscheme of $\operatorname{Spec} \mathbb{Z}$.\end{remark}

The following theorem summarizes fundamental results, due variously to Serre \cite{SerreLettreRibet2}, Wintenberger \cite{MR1944805}, Deligne \cite[I, Proposition 6.2]{DeligneInclusion}, Borovo{\u\i} \cite{MR0352101} and Pjatecki{\u\i}-{\v{S}}apiro \cite{MR0294352}, on the structure of Galois representations arising from abelian varieties over number fields; see also \cite[§10]{2015arXiv150505620H} for a detailed proof of the last statement.
\begin{theorem}\label{thm_InclusionMT}
Let $K$ be a number field and $A/K$ be an abelian variety. 

There exists a finite extension $L$ of $K$ such that for all primes $\ell$ the image of the natural representation
$
\rho_{\ell^\infty} : \abGal{L} \to \operatorname{Aut} T_\ell(A)
$
lands into $\MT(A)(\mathbb{Z}_\ell)$, and likewise the image of $\rho_{\ell} : \abGal{L} \to \operatorname{Aut} A[\ell]$ lands into $\MT(A)(\mathbb{F}_\ell)$. If furthermore the Mumford-Tate conjecture holds for $A$, then the index $[\MT(A)(\mathbb{Z}_\ell):\operatorname{Im} \rho_{\ell^\infty}]$ is bounded by a constant independent of $\ell$; the same is true for $[\MT(A)(\mathbb{F}_\ell):\operatorname{Im} \rho_\ell]$.
\end{theorem}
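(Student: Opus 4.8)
This statement bundles together three logically independent facts, and my plan is to treat them in turn. Throughout, fix a prime $\ell$, identify $T_\ell(A)$ with $V_{\mathbb{Z}}\otimes\mathbb{Z}_\ell$ via the comparison isomorphism (so that $\rho_{\ell^\infty}$ takes values in $\operatorname{GL}_{V_\mathbb{Z}}(\mathbb{Z}_\ell)$), and write $\mathcal{G}_\ell$ for the $\mathbb{Q}_\ell$-Zariski closure of $\operatorname{Im}\rho_{\ell^\infty}$.

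The first ingredient is the ``easy half'' of the Mumford-Tate conjecture: $\mathcal{G}_\ell^{\circ}\subseteq\MT(A)_{\mathbb{Q}_\ell}$. I would recall Deligne's proof \cite[I, Proposition 6.2]{DeligneInclusion}, which runs through the theorem that Hodge classes on powers of $A$ are absolutely Hodge; consequently their $\ell$-adic realisations are Tate classes, hence fixed by the connected monodromy, so that $\mathcal{G}_\ell^{\circ}$ lies in the common stabiliser of these tensors, namely $\MT(A)_{\mathbb{Q}_\ell}$. The same inclusion is due independently to Borovoi \cite{MR0352101} and Pjateckii-Shapiro \cite{MR0294352}. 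To upgrade this to $\mathcal{G}_\ell\subseteq\MT(A)_{\mathbb{Q}_\ell}$ \emph{uniformly in $\ell$} --- which is what produces the single field $L$ in the statement --- I would invoke Serre's theorem \cite{SerreLettreRibet2} that there is a finite extension $L/K$ making $\mathcal{G}_\ell$ connected for every $\ell$ at once; over such an $L$ one gets $\operatorname{Im}\rho_{\ell^\infty}\subseteq\mathcal{G}_\ell(\mathbb{Q}_\ell)=\mathcal{G}_\ell^{\circ}(\mathbb{Q}_\ell)\subseteq\MT(A)(\mathbb{Q}_\ell)$ for all $\ell$.

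The passage to $\mathbb{Z}_\ell$ and $\mathbb{F}_\ell$ is then formal. Since $\operatorname{Im}\rho_{\ell^\infty}$ already lies in $\operatorname{GL}_{V_\mathbb{Z}}(\mathbb{Z}_\ell)$, we obtain $\operatorname{Im}\rho_{\ell^\infty}\subseteq\MT(A)_{\mathbb{Q}}(\mathbb{Q}_\ell)\cap\operatorname{GL}_{V_\mathbb{Z}}(\mathbb{Z}_\ell)$, and this intersection equals $\MT(A)(\mathbb{Z}_\ell)$ for the integral model of the definition: that model is the schematic image of $\MT(A)_{\mathbb{Q}}\hookrightarrow\operatorname{GL}_{V_\mathbb{Z}}$, so its defining ideal is $\mathbb{Z}$-torsion-free, and therefore any $\mathbb{Z}_\ell$-point of $\operatorname{GL}_{V_\mathbb{Z}}$ whose generic fibre lands in $\MT(A)_{\mathbb{Q}}$ automatically factors through the model. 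Reducing modulo $\ell$, and using that $\rho_\ell$ is $\rho_{\ell^\infty}$ composed with $\operatorname{GL}_{V_\mathbb{Z}}(\mathbb{Z}_\ell)\to\operatorname{GL}_{V_\mathbb{Z}}(\mathbb{F}_\ell)$ together with the fact that $\MT(A)$ is a $\mathbb{Z}$-scheme (hence sends $\mathbb{Z}_\ell$-points to $\mathbb{F}_\ell$-points), gives $\operatorname{Im}\rho_\ell\subseteq\MT(A)(\mathbb{F}_\ell)$.

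Finally, assume the Mumford-Tate conjecture. Then the first step, over the field $L$ above, gives the \emph{equality} $\mathcal{G}_\ell=\MT(A)_{\mathbb{Q}_\ell}$, so $\operatorname{Im}\rho_{\ell^\infty}$ is a compact Zariski-dense subgroup of the connected reductive group $\MT(A)_{\mathbb{Q}_\ell}$ (reductive because the Hodge structure on $V$ is polarisable); such a subgroup is open in $\MT(A)(\mathbb{Q}_\ell)$ --- by Bogomolov's theorem, or directly from the adjoint action on its Lie algebra --- hence of finite index in $\MT(A)(\mathbb{Z}_\ell)$ for each individual $\ell$. The real content is the \emph{uniformity} of this index, which I would establish following Wintenberger \cite{MR1944805}, the detailed argument being \cite[§10]{2015arXiv150505620H}: for all but finitely many $\ell$ the integral model $\MT(A)_{\mathbb{Z}_\ell}$ is a connected reductive group scheme (the generic fibre being connected reductive, and this property spreading out, cf.\ the smoothness remark above), so up to a central isogeny of $\ell$-independent degree it decomposes as the product of the simply connected cover of its derived group with its radical torus. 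For such $\ell$ one invokes Serre's maximality result for reductions: Zariski-density in the $\mathbb{Q}_\ell$-fibre forces, for almost all $\ell$, that $\operatorname{Im}\rho_\ell$ contains the subgroup of $\MT(A)^{\mathrm{der}}(\mathbb{F}_\ell)$ generated by the $\mathbb{F}_\ell$-points of unipotent radicals of parabolics; a Frattini-subgroup argument in the simply connected factor then lifts this to the assertion that $\operatorname{Im}\rho_{\ell^\infty}$ contains $\MT(A)^{\mathrm{der}}(\mathbb{Z}_\ell)$ up to bounded index, while the toric direction is controlled by the similitude (cyclotomic) character and by class-number data of the splitting field of the radical, again bounded in $\ell$. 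Combining these through the bounded isogeny degree bounds $[\MT(A)(\mathbb{Z}_\ell):\operatorname{Im}\rho_{\ell^\infty}]$ for large $\ell$, and the finitely many remaining primes contribute a finite index each; the bound on $[\MT(A)(\mathbb{F}_\ell):\operatorname{Im}\rho_\ell]$ follows from the integral one since $\MT(A)(\mathbb{Z}_\ell)\twoheadrightarrow\MT(A)(\mathbb{F}_\ell)$ for $\ell$ large (smoothness and Hensel's lemma). I expect the genuine obstacle to be precisely this last ``uniformly large image mod $\ell$'' input: promoting the characteristic-zero, prime-by-prime statement of Zariski-density to a statement about $\operatorname{Im}\rho_\ell$ inside $\MT(A)(\mathbb{F}_\ell)$ that is uniform in $\ell$ requires a Nori-style classification of subgroups of $\MT(A)(\mathbb{F}_\ell)$, plus enough structure theory of reductive group schemes over $\mathbb{Z}_\ell$ to run the Frattini lift and to treat the non-simply-connected and toric parts with $\ell$-independent constants.
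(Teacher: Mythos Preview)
Your proposal is correct and matches the paper's treatment: the paper does not prove this theorem but presents it as a compilation of known results, attributing the inclusion $\mathcal{G}_\ell^{\circ}\subseteq\MT(A)_{\mathbb{Q}_\ell}$ to Deligne \cite[I, Proposition 6.2]{DeligneInclusion}, Borovo{\u\i} \cite{MR0352101} and Pjatecki{\u\i}-{\v{S}}apiro \cite{MR0294352}, the uniform connectedness to Serre \cite{SerreLettreRibet2}, and the bounded-index statement under the Mumford--Tate conjecture to Serre and Wintenberger \cite{MR1944805}, with \cite[§10]{2015arXiv150505620H} as the reference for a detailed proof. Your plan expands these citations into a coherent outline and is in fact more detailed than anything the paper itself provides; the references you invoke and the logical structure (Deligne's absolute-Hodge argument, Serre's uniform connectedness, the formal passage to the integral model, then Wintenberger's and Serre's work for the uniform index) are exactly those the paper relies on.
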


\subsection{Known results towards the Mumford-Tate conjecture}
While theorem \ref{thm_InclusionMT} will prove useful in establishing theorem \ref{thm_ms}, for the proof of theorem \ref{thm_mw} we shall also need some results which are known to hold independently of the truth of the Mumford-Tate conjecture, and which we now recall. The crucial point is that, even though we do not know in general that the Zariski closure of $G_{\ell^\infty}$ is ``independent of $\ell$'' in the sense predicted by the Mumford-Tate conjecture, results of Serre and Wintenberger imply that $G_{\ell^\infty}$ is not very far from being the group of $\mathbb{Z}_\ell$-points of an algebraic group. This is made more precise in the following theorem, for which we need to set some notation. Let $A/K$ be an abelian variety over a number field, and for every prime $\ell$ let $\underline{H}_\ell$ be the identity component of the $\mathbb{Z}_\ell$-Zariski closure of $G_{\ell^\infty}$. The groups $\underline{H}_\ell$ turn out to be reductive, except for finitely many primes $\ell$; when $\underline{H}_\ell$ is indeed reductive, we write $\underline{S}_\ell$ for its derived subgroup and $\underline{C}_\ell$ for its center. Following \cite{MR1944805}, we shall denote the special fiber of $\underline{H}_\ell$ (resp.~$\underline{S}_\ell$, $\underline{C}_\ell$) by $\SpecialFiber{H}$ (resp.~$\SpecialFiber{S}$, $\SpecialFiber{C}$), and the general fiber by $H_\ell$ (resp.~$S_\ell, C_\ell$). We then have the following result:

\begin{theorem}{(Serre \cite{SerreLettreRibet1981, SerreLettreRibet2, LettreVigneras}, Wintenberger \cite{MR1944805})}\label{thm_UnconditionalMT}
The following hold:
\begin{enumerate}
\item all the $\underline{H}_\ell$ but a finite number are smooth, reductive groups over $\mathbb{Z}_\ell$;
\item there is a finite extension $K'$ of $K$ with the property that for every prime $\ell$ the group 
$
\rho_{\ell^\infty}\left(\abGal{K'}\right)$ is contained in $\underline{H}_\ell(\mathbb{Z}_\ell)$;
\item the index
$
\left[\underline{H}_\ell(\mathbb{Z}_\ell) : \rho_{\ell^\infty}\left(\abGal{K'}\right)\right]
$
is bounded by a constant independent of $\ell$;
\item for all primes $\ell$ but finitely many exceptions, the special fiber $\SpecialFiber{H}$ of $\underline{H}_\ell$ acts semi-simply on $A[\ell]$, and the same is true for the special fiber $\SpecialFiber{S}$ of $\underline{S}_\ell$;
\item there exist an integer $N$ and a $\mathbb{Z}[1/N]$-subtorus $\underline{C}$ of $\operatorname{GL}_{2g,\mathbb{Z}[1/N]}$, containing the torus of homotheties, with the following property: for all primes $\ell$ not dividing $N$, the center $\underline{C}_\ell$ of $\underline{H}_\ell$ can be identified (up to conjugation) with $\underline{C} \times_{\mathbb{Z}[1/N]} \mathbb{Z}_\ell$.
\end{enumerate}

\end{theorem}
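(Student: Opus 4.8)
The plan is to recognise this statement as a digest of theorems of Serre \cite{SerreLettreRibet1981,SerreLettreRibet2,LettreVigneras} and Wintenberger \cite{MR1944805} and to explain how the pieces combine, rather than to reprove them. Identifying $G_{\ell^\infty}$ with $\rho_{\ell^\infty}(\abGal{K})\subseteq\operatorname{GL}(T_\ell A)$ and writing $V_\ell:=T_\ell(A)\otimes\mathbb{Q}_\ell$, I would start from the generic fibre. By Faltings' semisimplicity theorem $V_\ell$ is a semisimple $G_{\ell^\infty}$-module, and a connected algebraic subgroup of $\operatorname{GL}_{V_\ell}$ acting semisimply on $V_\ell$ has trivial unipotent radical; this shows that $H_\ell$ is connected reductive over $\mathbb{Q}_\ell$ (and, by results of Serre building on Bogomolov's theorem that the homotheties lie in the image, that $G_{\ell^\infty}$ is open in $H_\ell(\mathbb{Q}_\ell)$). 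For the integral refinement in item (1) I would invoke \cite{MR1944805}: at a prime $\ell$ of good reduction the Frobenius elements, whose characteristic polynomials on $V_\ell$ have $\ell$-independent integral coefficients, control the reduction mod $\ell$ of the Zariski closure and force $\SpecialFiber{H}$ to have the expected dimension and to be smooth once $\ell$ lies outside a finite exceptional set.

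For items (2) and (3) the first step is to bound the component group $G_{\ell^\infty}/\bigl(G_{\ell^\infty}\cap\underline H_\ell(\mathbb{Z}_\ell)\bigr)$ independently of $\ell$: this is a theorem of Serre, the mechanism being that this finite group is pinned down by the Galois action on a fixed rational cohomology module and is therefore subject to Jordan--Minkowski bounds on finite linear groups over $\mathbb{Q}$; one can then take for $K'$ the fixed field of the intersection of the relevant open subgroups, which works simultaneously for all $\ell$, establishing item (2). Item (3) — the uniform bound for $[\underline H_\ell(\mathbb{Z}_\ell):\rho_{\ell^\infty}(\abGal{K'})]$ — is the deepest input; here I would cite Serre's letters and \cite{MR1944805}, where the bound is obtained via Nori-type structure theory for subgroups of $\operatorname{GL}_{2g}(\mathbb{F}_\ell)$ generated by their $\ell$-power elements, showing that $\rho_\ell(\abGal{K'})$ already exhausts the special fibre of the reductive model up to bounded index when $\ell$ is large.

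Item (4) I would then deduce formally from item (1): outside the exceptional set $\SpecialFiber{H}$ and its derived group $\SpecialFiber{S}$ are connected reductive over $\mathbb{F}_\ell$, and a connected reductive group over $\mathbb{F}_\ell$ acts semisimply on any module of a fixed dimension $n$ as soon as $\ell$ is large compared to $n$ (Serre's complete-reducibility bound in characteristic $\ell$); since $\dim_{\mathbb{F}_\ell}A[\ell]=2g$ is fixed this covers all but finitely many $\ell$. For item (5) I would argue that $\underline C_\ell=Z(\underline H_\ell)^\circ$ is a torus and that the quotient of $\rho_{\ell^\infty}(\abGal{K'})$ acting through it is the ``abelian part'' of the representation; by Serre's theory of locally algebraic abelian $\ell$-adic representations these abelian parts form a strictly compatible system of CM type, cut out by a single $\mathbb{Q}$-torus together with an embedding into $\operatorname{GL}_V$, and an integral model of that embedding supplies the $\mathbb{Z}[1/N]$-subtorus $\underline C$; the identification $\underline C_\ell\cong\underline C\times_{\mathbb{Z}[1/N]}\mathbb{Z}_\ell$ for $\ell\nmid N$ would follow by comparing cocharacter lattices, which are read off the Hodge cocharacter and hence do not depend on $\ell$, while the presence of the homotheties in $\underline C$ is forced by the ever-present similitude character.

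The hard part is concentrated in items (1) and (3): the good-reduction behaviour of the Zariski closure $\underline H_\ell$ and, above all, the \emph{uniform}-in-$\ell$ index bound, which rest on the deep theorems of Faltings, Bogomolov and Nori and which I would not attempt to reprove; everything else is either elementary group theory or a formal consequence of these inputs together with the reductivity of $\underline H_\ell$.
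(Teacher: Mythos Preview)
Your proposal takes the same overall approach as the paper: both treat the theorem as a compilation of results of Serre and Wintenberger, giving pointers rather than reproving anything. The paper's proof is even terser than yours, essentially a list of precise citations (Wintenberger \cite[Theorem~1 and §2.1]{MR1944805} for (1) and (4), Serre \cite{SerreLettreRibet1981} for (2), Wintenberger plus Serre \cite{SerreLettreRibet2} for (3), and Serre's theory of abelian representations via \cite{Vasiu} for (5)).

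Two points where your glosses diverge from the paper's attributions are worth flagging. For (4), the paper does \emph{not} deduce semisimplicity of $\SpecialFiber{H}$ on $A[\ell]$ from item~(1) via a large-$\ell$ complete-reducibility argument; it cites Faltings (through Zarhin and \cite[§2.1]{MR1944805}) directly. Your route is legitimate but different, and it requires knowing that the tautological representation of $\SpecialFiber{H}$ has bounded highest weights, which is implicit in theorem~\ref{thm_FinitelyManyModels} rather than in (1) alone. For (2), your invocation of ``Jordan--Minkowski bounds on finite linear groups over $\mathbb{Q}$'' is not the mechanism: the component group of the $\ell$-adic monodromy is not naturally a subgroup of $\operatorname{GL}_n(\mathbb{Q})$, and Serre's argument instead shows that the component groups for varying $\ell$ are all quotients of a single finite group (so that one finite extension $K'/K$ kills them simultaneously). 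This does not create a gap, since you correctly attribute the result to Serre, but the sketch of the proof idea is off.
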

\begin{proof}
Part (1) follows from \cite[Theorem 1]{MR1944805} upon applying results of Zarhin \cite{MR553707}, as explained in \cite[§2.1]{MR1944805}, while (2) is a theorem of Serre \cite{SerreLettreRibet1981}. Part (3) follows from the main result of \cite{MR1944805} (which describes the derived subgroup of $\underline{H}_\ell$) together with the arguments of \cite{SerreLettreRibet2} (a description of the center of $\underline{H}_\ell$), cf.~\cite[§10]{2015arXiv150505620H} for a detailed proof. Part (4) is a consequence of the fundamental results of Faltings \cite{MR718935}, as it is again explained in \cite[§2.1]{MR1944805} (cf.~also \cite[§3.a]{LettreVigneras}).
Finally, (5) follows from Serre's theory of abelian representations: a detailed proof can be found in \cite{Vasiu}, see also \cite{Ullmo} and \cite[§10]{2015arXiv150505620H}.
\end{proof}

The next result we recall, again due to Serre, further implies that, even though we cannot show that the groups $\underline{H}_\ell$ are ``all the same'' (that is, that they all come from $\operatorname{MT}(A)$ by extension of scalars), their special fibers cannot vary too wildly:
\begin{theorem}{(Serre \cite[§1]{LettreVigneras})}\label{thm_FinitelyManyModels}
There exist a constant $c(g)$, depending only on $g=\dim A$, and finitely many $\mathbb{Z}$-algebraic subgroups $\underline{J}_1$, \ldots, $\underline{J}_k$ of $\operatorname{GL}_{2g,\mathbb{Z}}$ (again depending only on $g$) with the following property: if $\ell$ is a prime larger than $c(g)$ and $\SpecialFiber{H}$ acts semisimply on $A[\ell]$, then the $\overline{\mathbb{F}_\ell}$-algebraic group $\underline{S}_\ell \times_{\mathbb{Z}_\ell} \overline{\mathbb{F}_\ell} $ is $\operatorname{GL}_{2g,\overline{\mathbb{F}_\ell}}$-conjugate to one of the finitely many groups $\underline{J}_{1}\times_{\mathbb{Z}} \overline{\mathbb{F}_\ell}, \ldots, \underline{J}_{k} \times_{\mathbb{Z}} \overline{\mathbb{F}_\ell}$.
\end{theorem}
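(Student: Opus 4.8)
The plan is to reduce the statement to the fact that, for the pairs $(G,W)$ arising here --- $G=\underline{S}_\ell\times_{\mathbb{Z}_\ell}\overline{\mathbb{F}_\ell}$ a connected semisimple group and $W=A[\ell]\otimes\overline{\mathbb{F}_\ell}$ a faithful semisimple representation of dimension $2g$ --- only finitely many isomorphism classes (depending on $g$ alone) can occur; then each class is realized by a split semisimple Chevalley group scheme $\underline{J}_i\subseteq\operatorname{GL}_{2g,\mathbb{Z}}$ in the corresponding representation, and the desired conjugacy follows because such a subgroup of $\operatorname{GL}_{2g}$ over an algebraically closed field, acting semisimply, is determined up to conjugacy by the isomorphism type of the pair it forms with the standard representation. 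This is essentially Serre's argument, which I would set up as follows.

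The first reduction is to the generic fiber. Let $E$ be the union of the finite, $A$-dependent exceptional sets appearing in parts (1) and (4) of Theorem \ref{thm_UnconditionalMT}; I first set $E$ aside (returning to it below). For $\ell\notin E$ the scheme $\underline{H}_\ell$ is smooth reductive over $\mathbb{Z}_\ell$, its derived subgroup $\underline{S}_\ell$ is smooth, connected and semisimple over $\mathbb{Z}_\ell$, and its special fiber $\SpecialFiber{S}$ acts semisimply on $A[\ell]$. As $\underline{S}_\ell$ sits inside $\operatorname{GL}_{2g,\mathbb{Z}_\ell}=\operatorname{GL}(T_\ell A)$, its generic fiber carries the faithful $2g$-dimensional representation $V_\ell:=T_\ell(A)\otimes\mathbb{Q}_\ell$, which is semisimple (being a representation of a semisimple group in characteristic $0$), say $V_\ell\cong\bigoplus_i V_{\lambda_i}$. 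The possibilities are finite: the rank of $\underline{S}_\ell$ is at most $2g$, so its root datum belongs to a finite list, and for each root datum the Weyl dimension formula leaves only finitely many dominant weights $\lambda$ with $\dim V_\lambda\le 2g$. Since $\underline{S}_\ell$ is smooth, its root datum is the same on both fibers; thus the root datum of $\underline{S}_\ell$ together with the multiset $\{\lambda_i\}$ determines an element of a finite set $\mathcal{F}_g$ depending on $g$ alone.

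The second step descends this to the special fiber. Choose $c(g)$ large enough that, for $\ell>c(g)$, one has $\ell>\langle\lambda+\rho,\alpha^{\vee}\rangle$ for every coroot $\alpha^{\vee}$ and every weight $\lambda$ occurring in $\mathcal{F}_g$; then each Weyl module $\Delta(\lambda_i)$ over $\overline{\mathbb{F}_\ell}$ is irreducible, equal to $L(\lambda_i)$, with $\ell$-restricted highest weight. Because $T_\ell A$ is an $\underline{S}_\ell$-stable lattice in $V_\ell$, the decomposition map gives $[A[\ell]\otimes\overline{\mathbb{F}_\ell}]=\sum_i[\Delta(\lambda_i)]=\sum_i[L(\lambda_i)]$ in the representation ring of $\SpecialFiber{S}\times\overline{\mathbb{F}_\ell}$, and since $\SpecialFiber{S}$ acts semisimply on $A[\ell]$ this upgrades to $A[\ell]\otimes\overline{\mathbb{F}_\ell}\cong\bigoplus_i L(\lambda_i)$. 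Hence the pair $(\underline{S}_\ell\times\overline{\mathbb{F}_\ell},\,A[\ell]\otimes\overline{\mathbb{F}_\ell})$ is determined up to isomorphism by the element of $\mathcal{F}_g$ attached to it, and fixing one Chevalley model $\underline{J}_i$ per element of $\mathcal{F}_g$ concludes the argument.

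The step I expect to be the real obstacle is the treatment of the primes in $E$ --- those for which $\underline{H}_\ell$ is not a smooth reductive group scheme over $\mathbb{Z}_\ell$, or $\SpecialFiber{S}$ fails to act semisimply on $A[\ell]$, so that the clean comparison between generic and special fibers breaks down --- and, relatedly, making the threshold $c(g)$ genuinely independent of $A$. This is where Serre's deeper input enters: the theory of Frobenius tori, which shows that the formal character of $\underline{H}_\ell$ acting on $T_\ell(A)$ is computed by the characteristic polynomial of a suitably chosen Frobenius element, a polynomial in $\mathbb{Z}[T]$ that does not depend on $\ell$, so that the possible shapes of the $\underline{S}_\ell$ are pinned down uniformly even at the bad primes. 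By contrast, the finiteness of $\mathcal{F}_g$ in characteristic $0$, the decomposition-map step, and the passage to integral models are essentially formal, modulo the usual care one must take with reductive group schemes and their Weyl modules in small residue characteristic.
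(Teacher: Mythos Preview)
The paper does not prove this theorem: it is stated without proof and attributed to Serre's letter to Vign\'eras \cite[§1]{LettreVigneras}, so there is no ``paper's own proof'' to compare against.

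As for your sketch itself, the outline is broadly in the right spirit, but you have correctly located its weak point and not actually closed it. Your argument passes through the generic fibre $S_\ell/\mathbb{Q}_\ell$ and then descends via the decomposition map; this presupposes that $\underline{S}_\ell$ is smooth over $\mathbb{Z}_\ell$, which you obtain from Theorem~\ref{thm_UnconditionalMT}(1) only outside an exceptional set $E$ that depends on $A$. The statement, however, demands that $c(g)$ depend only on $g$, so as written your argument proves a weaker result. You gesture towards Frobenius tori to repair this, but that is precisely the substantive content of Serre's proof, and you have not carried it out.

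For context, Serre's approach in \cite{LettreVigneras} is rather different in emphasis: he works directly at the special fibre. The key input is that the weights of a maximal torus of $\SpecialFiber{H}$ acting on $A[\ell]\otimes\overline{\mathbb{F}_\ell}$ are uniformly bounded (independently of $\ell$ and $A$), a fact ultimately traceable to the Weil bounds on Frobenius eigenvalues. Once the weights are bounded and the dimension of the representation is fixed at $2g$, only finitely many pairs (semisimple group, faithful $2g$-dimensional representation) can occur over $\overline{\mathbb{F}_\ell}$ for $\ell$ large; this yields the list $\underline{J}_1,\ldots,\underline{J}_k$ and the constant $c(g)$ without any recourse to smoothness of the integral model $\underline{S}_\ell$. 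Your generic-fibre route would recover the same conclusion if you could make the comparison step uniform, but that essentially requires re-doing Serre's weight-bound argument anyway.
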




In view of the previous two theorems we introduce the following definition:
\begin{definition}\label{def_BadPrimes}
Let $A$ be an abelian variety over a number field $K$ and let $N$ be as in part (5) of theorem \ref{thm_UnconditionalMT}.
We shall say that a prime $\ell$ is \textit{bad} (for $A/K$) if any of the following is true: $\underline{H}_\ell$ is not smooth reductive over $\mathbb{Z}_\ell$, $\SpecialFiber{H}$ or $\SpecialFiber{S}$ does not act semisimply on $A[\ell]$, $\ell$ divides $N$, $\ell \leq c(\dim A)$ (with $c$ as in theorem \ref{thm_FinitelyManyModels}), or $\ell$ is ramified in $K$. Theorem \ref{thm_UnconditionalMT} ensures that for a given abelian variety there are only finitely many bad primes, and we call all the other primes \textit{good}.

\end{definition}



%

\subsection{Proof of theorem \ref{thm_mw}: preliminary reductions}
As the statement of theorem \ref{thm_mw} is clearly invariant under extension of the base field, parts (2) and (3) of theorem \ref{thm_UnconditionalMT} allow us to assume that $\rho_{\ell^\infty}\left(\abGal{K} \right)$ is included in $\underline{H}_\ell(\mathbb{Z}_\ell)$ for all primes $\ell$, in such a way that the index
$
 [\underline{H}_\ell(\mathbb{Z}_\ell):\operatorname{Gal}\left(K(A[\ell^\infty])/K \right)]
$
is bounded by a constant independent of $\ell$. 
Since the statement of theorem \ref{thm_mw} is also invariant under isogenies, making a further extension of the base field if necessary we can also assume without loss of generality that $A$ is principally polarized, which implies that $G_{\ell^\infty}$, resp.~$G_\ell$, is a subgroup of $\operatorname{GSp}_{2g}(\mathbb{Z}_\ell)$, resp.~of $\operatorname{GSp}_{2g}(\mathbb{F}_\ell)$. The definition of $\underline{H}_\ell$ then shows that we have inclusions $\underline{H}_\ell \subseteq \operatorname{GSp}_{2g,\mathbb{Z}_\ell}$ and $\SpecialFiber{H} \subseteq \operatorname{GSp}_{2g,\mathbb{F}_\ell}$.

The following simple lemma shows that the property of having index bounded by a constant is stable under passage to subgroups and quotients: knowing this will be useful to convert statements concerning the algebraic groups $\underline{H}_\ell$ into statements involving Galois groups, and vice versa.
\begin{lemma}\label{lemma_Index}
Let $C$ be a group and $A,B$ be subgroups of $C$ such that $[C:B]$ is finite. We have $[A:B\cap A ] \leq [C:B]$. Moreover, if $\pi:C \to D$ is a quotient of $C$, then $[D:\pi(B)] \bigm\vert [C:B]$.
\end{lemma}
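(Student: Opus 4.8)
This is an elementary group-theoretic statement, so the "proof" is really just a matter of producing the two injections/surjections that give the index comparisons. For the first claim, the plan is to exhibit an injection from the coset space $A/(B\cap A)$ into $C/B$. The natural map sends a coset $a(B\cap A)$ to the coset $aB$; I would check that this is well-defined (if $a(B\cap A)=a'(B\cap A)$ then $a^{-1}a'\in B\cap A\subseteq B$, so $aB=a'B$) and injective (if $aB=a'B$ then $a^{-1}a'\in B$, and since $a,a'\in A$ we also have $a^{-1}a'\in A$, hence $a^{-1}a'\in B\cap A$). This immediately yields $[A:B\cap A]\leq[C:B]$, and in particular finiteness of the left-hand index.

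For the second claim, I would use the surjection $C/B \twoheadrightarrow D/\pi(B)$ induced by $\pi$: the map $cB\mapsto \pi(c)\pi(B)$ is well-defined because $\pi(B)$ absorbs $\pi$ of anything in $B$, and it is visibly surjective since $\pi$ is. Thus $[D:\pi(B)]\leq[C:B]$. To upgrade this from an inequality to a divisibility, I would look at the fibers of this surjection and argue they all have the same cardinality — concretely, $D$ acts transitively on $D/\pi(B)$ and this lifts to an action of $C$ on $C/B$ compatible with the projection, so the fibers are permuted transitively and hence equinumerous; therefore $[C:B]$ is the product of $[D:\pi(B)]$ with the common fiber size, giving $[D:\pi(B)]\mid[C:B]$. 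Alternatively, and perhaps more cleanly, I would factor through $\ker\pi$: writing $N=\ker\pi$, one has $[C:B]=[C:BN]\cdot[BN:B]$ and $[D:\pi(B)]=[C/N:BN/N]=[C:BN]$, so the divisibility is immediate.

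There is essentially no obstacle here; the only point requiring the tiniest care is the divisibility (as opposed to mere inequality) in the second statement, which is why I would prefer the factorization-through-$\ker\pi$ argument, since it makes the quotient $[C:B]/[D:\pi(B)] = [BN:B]$ an honest index and hence an integer with no further work. I would write up the first claim via the explicit injection of coset spaces and the second via the identity $[C:B]=[C:B\ker\pi]\cdot[B\ker\pi:B]$ together with the observation that $\pi$ identifies $C/B\ker\pi$ with $D/\pi(B)$.
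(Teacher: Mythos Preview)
Your proposal is correct and follows essentially the same approach as the paper: the paper proves the first claim via the induced injection $A/(A\cap B)\hookrightarrow C/B$ exactly as you do, and dismisses the second statement as ``obvious'' without further detail. Your factorization-through-$\ker\pi$ argument for the divisibility is a clean way to supply that missing detail.
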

\begin{proof}
The map $A \hookrightarrow C \to C/B$ induces an injection (of sets) of $A/(A \cap B)$ into $C/B$. The second statement is obvious.
\end{proof}


This easy fact allows us to work with ``equalities up to a finite index", for which we now introduce some notations. If $L_1,L_2$ are number fields that depend on $A/K$ and on some other set of parameters, we write $L_1 \uguale L_2$ to mean that there exists a constant $C$ (depending on $A/K$ only) such that the inequalities
$
[L_1:L_1 \cap L_2 ] \leq C \text{ and } [L_2 :L_1  \cap L_2 ] \leq C
$
%
hold for all values of the parameters; likewise, if $G_1, G_2$ are subgroups of a same group (and depend on some set of parameters), we write $G_1 \uguale G_2$ if both $[G_1:G_1 \cap G_2]$ and $[G_2:G_1 \cap G_2]$ are bounded by a constant depending only on $A/K$, uniformly in all other parameters. Furthermore, for two functions $f, g : I \to \mathbb{R}^+$, where $I$ is any set, we write $f \uguale g$ if there is a constant $C'>0$ such that $\frac{1}{C'} g(x) \leq f(x) \leq C' g(x)$  for all $x \in I$. Finally, to deal with arithmetic functions we introduce the following definition:
\begin{definition}
Let $\mathcal{P}$ be the set of prime numbers, $I$ be any set and $h:I \times \mathcal{P} \to \mathbb{N}^+$ be any function. We say that $h(x,\ell)$ is a power of $\ell$ up to a bounded constant if there exists a $C''>0$ such that for all $x \in I$ and $\ell \in \mathcal{P}$ we have $\frac{h(x,\ell)}{\ell^{v_\ell(h(x,\ell))}} \leq C''$, or equivalently, if the prime-to-$\ell$ part of $h(x,\ell)$ is bounded independently of $x$ and $\ell$.
\end{definition}
As a typical example of the use of this notation, for an abelian variety that satisfies the Mumford-Tate conjecture the conclusion of theorem \ref{thm_InclusionMT} can be expressed by writing $\operatorname{Gal}\left(K(A[\ell^\infty])/K \right) \uguale \MT(A)(\mathbb{Z}_\ell)$ and $\operatorname{Gal}\left(K(A[\ell])/K \right) \uguale \MT(A)(\mathbb{F}_\ell)$, while theorem \ref{thm_UnconditionalMT} implies that, for \textit{any} abelian variety $A$ over a number field $K$, possibly after replacing $K$ with a finite extension $K'$ we have 
$
\operatorname{Gal}\left( K(A[\ell])/K \right) \uguale \underline{H}_\ell(\mathbb{F}_\ell).
$
We can also apply lemma \ref{lemma_Index} to the groups $C=\underline{H}_\ell(\mathbb{F}_\ell)$, $B=\operatorname{Gal}\left(K(A[\ell])/K \right)$ and $A=\left\{x \in \underline{H}_\ell(\mathbb{F}_\ell) \bigm\vert xh=h \quad \forall h \in H \right\}$ to get
\[
\operatorname{Gal}\left(K(A[\ell])/ K(H) \right) \uguale \left\{ x \in \underline{H}_\ell(\mathbb{F}_\ell) \bigm\vert xh=h \quad \forall h \in H \right\},
\]
where the implied constant depends on $A/K$, but not on $\ell$ or $H$. Finally, notice that if $A,B$ are groups (depending on some set of parameters) such that $[B:A] \leq N$ for all values of the parameters, then taking $N':=N!$ we have $[B:A] \bigm\vert N'$, again for any choice of the parameters: if we so desire we can therefore replace boundedness conditions by divisibility conditions.

\subsection{Smoothness}\label{sect_Stabilizers}
In the course of the proof of theorem \ref{thm_mw} we shall need to know that certain algebraic groups are smooth; in this section we collect the relevant results in this direction.
Let $H$ be a finite subgroup of $A[\ell^\infty]$.
Write $H$ as $\prod_{i=1}^{2g} \mathbb{Z}/\ell^{m_i}\mathbb{Z}$ for certain integers $m_1 \geq \ldots \geq m_{2g}$, let $e_1, \ldots, e_{2g}$ be generators of the cyclic factors of $H$ (so $e_i$ is a torsion point of order $\ell^{m_i}$), and let $\widehat{e}_1, \ldots, \widehat{e}_{2g}$ be a basis of $T_\ell A$ lifting the $e_i$ (that is, satisfying $\widehat{e}_i \equiv e_i \pmod{\ell^{m_i}}$ for $i=1,\ldots,2g$). For a subset $I$ of $\left\{1,\ldots,2g\right\}$ we let $\underline{\mathcal{G}}_I$ be the $\mathbb{Z}_\ell$-algebraic group given by
\[
\underline{\mathcal{G}}_I = \left\{ M \in \underline{H}_\ell \bigm\vert M \widehat{e}_i = \widehat{e}_i \quad \forall i \in I \right\}.
\]
We plan to show that $\underline{\mathcal{G}}_I$ and various other related groups are smooth (over $\mathbb{Z}_\ell$, or equivalently over $\mathbb{F}_\ell$, cf.~lemma \ref{lemma_SmoothOverFlImpliesSmoothOverZl}) whenever $\ell$ is sufficiently large with respect to $A/K$, independently of the choice of $\widehat{e_1}, \ldots, \widehat{e_{2g}}$ and $I$ (the result crucial to our applications is lemma \ref{lemma_SmoothnessNew}). We shall make repeated use of the following fact:
\begin{theorem}\label{thm_ConditionsSmoothness}
Let $\ell$ be a prime number and $k$ be a finite field of characteristic $\ell$. Let $\mathcal{F}$ be an affine group scheme over $k$ with coordinate ring $R$. The following are equivalent:
\begin{enumerate}
\item $\mathcal{F}$ is smooth;
\item $R \otimes_{k} \overline{k}$ is reduced;
\item the nilpotency index of $R \otimes_{k} \overline{k}$ is smaller than $\ell$, that is, there exists an integer $e < \ell$ such that for all $a \in R\otimes_{k} \overline{k}$ and all positive integers $n$, the equality $a^n=0$ implies $a^e=0$;
\item the equality $\dim_{k} \operatorname{Lie} \mathcal{F} = \dim \mathcal{F}$ holds.
\end{enumerate}
\end{theorem}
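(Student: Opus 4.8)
The plan is to establish the three biconditionals $(1)\Leftrightarrow(4)$, $(1)\Leftrightarrow(2)$ and $(2)\Leftrightarrow(3)$, which together close the cycle, exploiting throughout that a finite field $k$ is perfect. As a first step I would observe that none of the four conditions changes if $k$ is replaced by $\overline k$: smoothness is fpqc-local on the base, condition (2) is by definition about $R\otimes_k\overline k$, the nilpotency index in (3) only involves $R\otimes_k\overline k$, and both $\operatorname{Lie}\mathcal F$ and $\dim\mathcal F$ commute with field extension. So I may and do assume $k=\overline k$.

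For $(1)\Leftrightarrow(4)$: an algebraic group scheme is homogeneous, so $\mathcal F$ is smooth if and only if it is smooth at the identity $e$, which, $k$ being perfect, is equivalent to $\mathcal O_{\mathcal F,e}$ being a regular local ring; homogeneity also forces $\mathcal F$ to be equidimensional, whence $\dim\mathcal O_{\mathcal F,e}=\dim\mathcal F$. Since $e$ is a $k$-point, $\operatorname{Lie}\mathcal F=(\mathfrak m_e/\mathfrak m_e^2)^{\vee}$ and $\dim_k\operatorname{Lie}\mathcal F=\dim_k\mathfrak m_e/\mathfrak m_e^2\ge\dim\mathcal O_{\mathcal F,e}$, with equality precisely when $\mathcal O_{\mathcal F,e}$ is regular; this is exactly $(1)\Leftrightarrow(4)$. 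For $(1)\Leftrightarrow(2)$: smoothness is preserved by base change and implies reducedness, so $(1)\Rightarrow(2)$ is clear, while the converse is the classical fact that a finite-type group scheme over a field is smooth as soon as it is geometrically reduced — the quickest route being that a reduced scheme of finite type over a perfect field is generically smooth, after which homogeneity and the density of closed points propagate smoothness everywhere. I would simply cite SGA3 (or Milne's book on algebraic groups) for this.

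The characteristic-$\ell$ content is $(2)\Leftrightarrow(3)$. If $\overline R:=R\otimes_k\overline k$ is reduced then $e=1<\ell$ witnesses (3), so only $(3)\Rightarrow(2)$ needs an argument, and I would prove its contrapositive. Suppose $\overline R$ is not reduced. Since $\overline k$ is perfect, $\mathcal F_{\mathrm{red}}$ is a closed subgroup scheme of $\mathcal F_{\overline k}$, properly contained in it, so the quotient $N:=\mathcal F_{\overline k}/\mathcal F_{\mathrm{red}}$ is a nontrivial finite infinitesimal group scheme; the quotient morphism being faithfully flat, its coordinate ring injects, $\mathcal O(N)\hookrightarrow\overline R$. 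By the structure theory of finite group schemes, a nontrivial infinitesimal group scheme over an algebraically closed field of characteristic $\ell$ contains a copy of $\mu_\ell$ or of $\alpha_\ell$; in either case the coordinate ring is $\overline k[t]/(t^\ell)$, so $\mathcal O(N)$ admits a surjection onto $\overline k[t]/(t^\ell)$. Picking $b\in\mathcal O(N)$ lifting $t$, its image shows $b^{\ell-1}\ne 0$, while $b$ lies in the maximal ideal of the local Artinian ring $\mathcal O(N)$ and is therefore nilpotent. Transporting $b$ into $\overline R$ along $\mathcal O(N)\hookrightarrow\overline R$ produces an element of $\overline R$ of nilpotency order at least $\ell$, so no integer $e<\ell$ can satisfy the condition of (3): this is $\lnot(3)$.

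The single genuinely non-formal ingredient — and the step I expect to be the main obstacle — is the structural input invoked in $(3)\Rightarrow(2)$, namely that a nontrivial infinitesimal group scheme over an algebraically closed field contains a subgroup scheme of order $\ell$, which over such a field can only be $\mu_\ell$ or $\alpha_\ell$. Everything else is either a formal manipulation of local rings and Lie algebras or a standard fact about finite-type group schemes over a field, the decisive simplification being that the field $k$ occurring in the statement is finite, hence perfect.
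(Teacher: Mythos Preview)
Your arguments for $(1)\Leftrightarrow(2)$ and $(1)\Leftrightarrow(4)$ are correct and essentially match the paper, which simply cites Waterhouse for both equivalences.

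The implication $(3)\Rightarrow(2)$, however, has a genuine gap. You form $N=\mathcal F_{\overline k}/\mathcal F_{\mathrm{red}}$ and treat it as a group scheme so as to invoke the structure theory of infinitesimal groups, but this requires $\mathcal F_{\mathrm{red}}$ to be \emph{normal} in $\mathcal F_{\overline k}$, and even over a perfect (or algebraically closed) field that can fail. A concrete counterexample in characteristic $\ell$ is $G=\alpha_\ell\rtimes\mathbb G_m$, with $\mathbb G_m$ acting on $\alpha_\ell$ by scaling: here $G_{\mathrm{red}}=\mathbb G_m$, and over any $\overline k$-algebra $A$ one computes $(a,1)(0,g)(a,1)^{-1}=(a(1-g),g)$, which for $a\in\alpha_\ell(A)\setminus\{0\}$ and $g\in\overline k^\times\setminus\{1\}$ does not lie in $\mathbb G_m(A)$. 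Without a group structure on $N$ your structural input does not apply, and there is no a priori reason for the local Artinian ring $\mathcal O(N)$ to contain a nilpotent of order $\geq\ell$ (for instance $\overline k[x,y]/(x^2,xy,y^2)$ has only order-$2$ nilpotents). Replacing $\mathcal F_{\mathrm{red}}$ by the Frobenius kernel gives a normal infinitesimal \emph{sub}group, but then the coordinate ring is a quotient of $\overline R$ rather than a subring, and a lift of the nilpotent $t\in\overline k[t]/(t^\ell)$ need not be nilpotent in $\overline R$; so the repair is not entirely routine.

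The paper's route for $(3)\Rightarrow(2)$ is different and sidesteps this issue: it invokes the Hopf-algebra computation underlying Cartier's theorem (as in Milne's proof), which in characteristic $\ell$ shows directly that a non-reduced Hopf algebra over $\overline k$ must contain a nilpotent of order at least $\ell$. The argument uses only the comultiplication together with the nonvanishing of the binomial coefficients $\binom{n}{i}$ modulo $\ell$ for $n<\ell$; no quotients, normality questions, or classification of infinitesimal groups are needed.
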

\begin{proof}
1 and 2 are equivalent by \cite[Theorem on p.~88]{MR547117}. 1 and 4 are equivalent by \cite[Corollary on p.~94]{MR547117}. Clearly 2 implies 3, and 3 implies 2 by the same argument that proves Cartier's theorem (all algebraic groups over a field of characteristic zero are smooth), see for example \cite[Proof of Theorem 10.1]{milneAGS}.
\end{proof}


The following proposition, while certainly well-known to experts, does not seem to appear anywhere in the literature; we will use it as a substitute for Cartier's theorem on smoothness when working over a field of positive characteristic.

\begin{proposition}\label{prop_PositiveCharCartier}
Let $n,d,m$ be fixed positive integers. There is a constant $c(n,d,m)$ with the following property: for every prime $\ell > c(n,d,m)$, every finite field $k$ of characteristic $\ell$, and every algebraic subgroup $\mathcal{F}$ of $\operatorname{GL}_{n,k}$ that is cut in $\displaystyle \frac{k[x_{ij},y]}{(\det(x_{ij})y-1)}$ by at most $m$ equations of degree at most $d$ is smooth over $k$.
\end{proposition}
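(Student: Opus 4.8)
The idea is to reduce smoothness of $\mathcal{F}$ to the criterion in part (3) of Theorem \ref{thm_ConditionsSmoothness}: namely, it suffices to produce a bound $e = e(n,d,m)$, independent of $\ell$ and $k$, on the nilpotency index of the coordinate ring $R \otimes_k \overline{k}$ of $\mathcal{F}$, and then take $c(n,d,m) := e(n,d,m)$. Since passing to $\overline{k}$ does not change the defining equations or their degrees, we may assume $k = \overline{k}$ is algebraically closed. Now the crucial point is a uniform Noetherianity / bounded-complexity statement: the ideal $I_{\mathcal{F}} \subseteq k[x_{ij},y]/(\det(x_{ij})y - 1)$ is generated by $m$ polynomials of degree $\le d$, so the radical ideal $\sqrt{I_{\mathcal{F}}}$, and in particular a choice of exponents witnessing $f^N \in I_{\mathcal{F}}$ for each generator $f$ of $\sqrt{I_{\mathcal{F}}}$, can be controlled in terms of $(n,d,m)$ only. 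The nilpotency index of $R \otimes_k \overline{k} = \bigl(k[x_{ij},y]/(\det(x_{ij})y - 1)\bigr)/I_{\mathcal{F}}$ is exactly the smallest $e$ such that $a^e \in I_{\mathcal{F}}$ whenever $a^N \in I_{\mathcal{F}}$ for some $N$; effective Nullstellensatz-type bounds (for instance those coming from Hermann's work or its modern refinements) give such an $e$ as a function of the ambient dimension $n^2+1$ and the degree $d$ and number $m$ of the generators, uniformly over all fields $k$. Once $e = e(n,d,m) < \ell$, Theorem \ref{thm_ConditionsSmoothness}(3) forces $R \otimes_k \overline{k}$ to be reduced, hence $\mathcal{F}$ is smooth.

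In more detail, the steps I would carry out are: (i) reduce to $k$ algebraically closed; (ii) observe that $\mathcal{F}_{\mathrm{red}}$ is the reduced closed subscheme with the same underlying variety, and that $\mathcal{F}$ is smooth iff $\mathcal{F} = \mathcal{F}_{\mathrm{red}}$ iff the defining ideal $I_{\mathcal{F}}$ is radical; (iii) invoke an effective Nullstellensatz to bound, purely in terms of $n, d, m$, an integer $N_0 = N_0(n,d,m)$ such that $\sqrt{I_{\mathcal{F}}}^{\,N_0} \subseteq I_{\mathcal{F}}$ — this bound is field-independent because the relevant Gröbner-basis / elimination estimates are; (iv) conclude that the nilpotency index of $R\otimes_k\overline k$ is at most $N_0$, so that for $\ell > N_0$ part (3) of Theorem \ref{thm_ConditionsSmoothness} applies and $\mathcal{F}$ is smooth. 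One should be a little careful that the ambient ring is not a polynomial ring but the localization $k[x_{ij},y]/(\det(x_{ij})y-1)$; this is harmless since one can either clear denominators and work with the homogenization in $k[x_{ij},y]$, adding the single extra generator $\det(x_{ij})y-1$ of degree $n+1$ (which only changes the constants by a controlled amount), or note that localizing a ring does not increase its nilpotency index.

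The main obstacle is locating and correctly invoking the uniform, characteristic-independent effective bound in step (iii): one needs that the exponent in the Nullstellensatz "$\sqrt{I}^{\,N_0}\subseteq I$" depends only on the number of variables and the degrees and number of generators, and not on the characteristic of the base field. This is true — it follows from the fact that Gröbner basis computations and the classical Hermann bounds are insensitive to the characteristic, or from Kollár-type effective Nullstellensatz statements, all of which are uniform in the field — but it is the one place where the argument genuinely rests on a nontrivial external input rather than on the algebraic-group-theoretic facts already recalled. Everything else is formal: the equivalence of smoothness with radicality of the defining ideal over an algebraically closed field, and the translation between the nilpotency index and the Nullstellensatz exponent, are standard.
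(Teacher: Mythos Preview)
Your proposal is correct and follows essentially the same route as the paper: both reduce to the criterion in Theorem~\ref{thm_ConditionsSmoothness}(3) by bounding the nilpotency index of $R\otimes_k\overline{k}$ via an effective Nullstellensatz, after absorbing the extra generator $\det(x_{ij})y-1$ of degree $n+1$ into the list of defining equations. The paper invokes a specific reference (Jelonek, with the explicit bound $c(n,d,m)=\max\{d,n+1\}^{m+1}$) where you gesture toward Hermann/Koll\'ar-type results, but the structure of the argument is identical.
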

\begin{proof}
Let $I=(f_1,\ldots,f_t)$ be the ideal defining $\mathcal{F}$ in $\displaystyle \frac{k[x_{ij},y]}{(\det(x_{ij})y-1)}$, where $t \leq m$ and the total degree of every $f_h$ is at most $d$. Let $\displaystyle R=\frac{k[x_{ij},y]}{(\det(x_{ij})y-1,I)}$ be the coordinate ring of $\mathcal{F}$.
To test smoothness we can base-change to $\overline{k}$, and by theorem \ref{thm_ConditionsSmoothness} we only need to prove that the nilpotency index of 
$
R \otimes_{k} \overline{k} \cong \displaystyle \frac{\overline{k}[x_{ij},y]}{(\det(x_{ij})y-1,f_1,\ldots,f_t)}$
is bounded by a function of $n$, $d$ and $m$ alone, uniformly in $\ell$ and $k$.
Now just notice that the ideal $\left(\det(x_{ij})y-1,f_1,\ldots,f_t\right)$ is generated by equations whose number and degree are bounded in terms of $n$, $d$, and $m$, so the result follows from \cite[Theorem 1.3]{MR2198324} (see also \cite{MR944576}). 
More precisely, since we have at most $m+1$ equations of degree at most $\max\{d,n+1\}$, \cite[Theorem 1.3]{MR2198324} shows that one can take $c(n,d,m)=\max\{d,n+1\}^{m+1}$.
\end{proof}

\begin{lemma}
Let $n$ be a positive integer, $\mathcal{F}$ be a group subscheme of $\operatorname{GL}_{n,\mathbb{Q}_\ell}$, and let $\underline{\mathcal{F}}$ be the Zariski closure of $\mathcal{F}$ in $\operatorname{GL}_{n,\mathbb{Z}_\ell}$. Then $\underline{\mathcal{F}}$ is flat over $\operatorname{Spec} \mathbb{Z}_\ell$.
\end{lemma}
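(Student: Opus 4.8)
The plan is to reduce the statement to the elementary fact that a module over a discrete valuation ring is flat precisely when it is torsion-free, and then to observe that the coordinate ring of a scheme-theoretic closure, viewed inside a torsion-free ambient ring, is automatically torsion-free.

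First I would fix notation. Write
\[
R = \frac{\mathbb{Z}_\ell[x_{ij},y]}{(\det(x_{ij})y-1)}, \qquad 1 \le i,j \le n,
\]
for the coordinate ring of $\operatorname{GL}_{n,\mathbb{Z}_\ell}$. Since $R$ is a localization of the polynomial $\mathbb{Z}_\ell$-algebra $\mathbb{Z}_\ell[x_{ij}]$ (at $\det(x_{ij})$), it is a flat, in particular $\ell$-torsion-free, $\mathbb{Z}_\ell$-module; hence the base-change map $R \hookrightarrow R_{\mathbb{Q}_\ell} := R \otimes_{\mathbb{Z}_\ell} \mathbb{Q}_\ell$ is injective, and $R_{\mathbb{Q}_\ell}$ is the coordinate ring of $\operatorname{GL}_{n,\mathbb{Q}_\ell}$. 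Recall also that a subgroup scheme of an affine algebraic group over a field is closed, so $\mathcal{F}$ corresponds to an ideal $J \subseteq R_{\mathbb{Q}_\ell}$.

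Next I would recall the construction of the Zariski (scheme-theoretic) closure. As $\mathcal{F} \hookrightarrow \operatorname{GL}_{n,\mathbb{Q}_\ell} \to \operatorname{GL}_{n,\mathbb{Z}_\ell}$ is a morphism of noetherian affine schemes, its scheme-theoretic image $\underline{\mathcal{F}}$ is $\operatorname{Spec}(R/I)$, where $I$ is the kernel of the composite $R \to R_{\mathbb{Q}_\ell} \to R_{\mathbb{Q}_\ell}/J$; equivalently $I = J \cap R$ inside $R_{\mathbb{Q}_\ell}$. Since $\mathcal{F}$, being a group scheme over a field of characteristic $0$, is reduced by Cartier's theorem, this coincides with the reduced closed-subscheme structure on the topological closure, so the term ``Zariski closure'' is unambiguous here.

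The conclusion is then immediate: by construction $R/I$ injects into $R_{\mathbb{Q}_\ell}/J$, which is a $\mathbb{Q}_\ell$-vector space and hence a torsion-free $\mathbb{Z}_\ell$-module; therefore $R/I$ is itself a torsion-free $\mathbb{Z}_\ell$-module. As $\mathbb{Z}_\ell$ is a discrete valuation ring, every torsion-free $\mathbb{Z}_\ell$-module is flat, so $R/I$ is flat over $\mathbb{Z}_\ell$, i.e.\ $\underline{\mathcal{F}}$ is flat over $\operatorname{Spec}\mathbb{Z}_\ell$. I do not expect any real obstacle: the whole content is the standard behaviour of scheme-theoretic closures over a Dedekind base, and the only points needing (minimal) care are the identification of the ``Zariski closure'' with the scheme-theoretic closure and the invocation of the equivalence between flatness and torsion-freeness over a principal ideal domain.
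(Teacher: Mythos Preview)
Your proof is correct and follows essentially the same approach as the paper: identify the ideal defining $\underline{\mathcal{F}}$ as $J \cap R$, observe that $R/I$ therefore injects into the $\mathbb{Q}_\ell$-vector space $R_{\mathbb{Q}_\ell}/J$ and is hence torsion-free, and conclude flatness over the DVR $\mathbb{Z}_\ell$. You include a little more justification (closedness of subgroup schemes, the Cartier remark) than the paper does, but the argument is the same.
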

\begin{proof}
An affine scheme $\operatorname{Spec} \underline{R}$ over $\mathbb{Z}_\ell$ is flat if and only if its coordinate ring $\underline{R}$ is a torsion-free $\mathbb{Z}_\ell$-module (\cite[Corollary 2.14]{MR1917232}). In our case, if $I$ is the ideal of $\frac{\mathbb{Q}_\ell[x_{ij},y]}{(\det(x_{ij})y-1)}$ that defines $\mathcal{F}$, then $\underline{I}:=I \cap \frac{\mathbb{Z}_\ell[x_{ij},y]}{(\det(x_{ij})y-1)}$  is the ideal defining $\underline{\mathcal{F}}$. In particular, the coordinate ring $\underline{R}$ of $\underline{\mathcal{F}}$ injects into the coordinate ring $R$ of $\mathcal{F}$, which is torsion-free since it is a $\mathbb{Q}_\ell$-vector space.
\end{proof}

\begin{lemma}\label{lemma_SmoothOverFlImpliesSmoothOverZl}
Let $n$ be a positive integer, $\mathcal{F}$ be a group subscheme of $\operatorname{GL}_{n,\mathbb{Q}_\ell}$, and let $\underline{\mathcal{F}}$ be the Zariski closure of $\mathcal{F}$ in $\operatorname{GL}_{n,\mathbb{Z}_\ell}$. Suppose furthermore that $\underline{\mathcal{F}}$ is smooth over $\mathbb{F}_\ell$: then $\underline{\mathcal{F}}$ is smooth over $\mathbb{Z}_\ell$.
\end{lemma}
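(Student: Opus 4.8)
The plan is to combine the flatness established in the preceding lemma with the fibrewise characterisation of smoothness. First I would note that $\underline{\mathcal{F}}$, being a closed subscheme of $\operatorname{GL}_{n,\mathbb{Z}_\ell}$, is of finite type — hence of finite presentation — over the Noetherian base $\operatorname{Spec}\mathbb{Z}_\ell$, and that by the preceding lemma it is flat over $\operatorname{Spec}\mathbb{Z}_\ell$. A morphism of finite presentation is smooth if and only if it is flat and all of its fibres are smooth (EGA IV, 17.8.2), so the statement reduces to checking that the two fibres of $\underline{\mathcal{F}}\to\operatorname{Spec}\mathbb{Z}_\ell$ are smooth.

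The special fibre $\underline{\mathcal{F}}\times_{\mathbb{Z}_\ell}\mathbb{F}_\ell$ is smooth by hypothesis. As for the generic fibre, forming the Zariski closure recovers $\mathcal{F}$ over $\mathbb{Q}_\ell$, so $\underline{\mathcal{F}}\times_{\mathbb{Z}_\ell}\mathbb{Q}_\ell=\mathcal{F}$ is a group scheme of finite type over a field of characteristic zero, and is therefore smooth by Cartier's theorem. Putting these two facts together, $\underline{\mathcal{F}}$ is smooth over $\mathbb{Z}_\ell$.

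There is no real obstacle in this argument; the one point that merits emphasis is the role of flatness. An arbitrary $\mathbb{Z}_\ell$-model of a smooth $\mathbb{Q}_\ell$-group may well have a non-reduced (hence non-smooth) special fibre, so some input is needed to rule this out, and that input is precisely the flatness of the Zariski closure supplied by the preceding lemma; once flatness is in hand, the smoothness of the two fibres is immediate. If one preferred to avoid the general fibrewise criterion, one could argue instead via Theorem \ref{thm_ConditionsSmoothness}(4): flatness over the discrete valuation ring $\mathbb{Z}_\ell$ forces the generic and special fibres to have a common dimension $d$, Cartier's theorem gives $\dim_{\mathbb{Q}_\ell}\operatorname{Lie}=d$ on the generic fibre and the hypothesis gives $\dim_{\mathbb{F}_\ell}\operatorname{Lie}=d$ on the special fibre, so $\Omega_{\underline{\mathcal{F}}/\mathbb{Z}_\ell}$ has locally constant fibre rank $d$, which together with flatness again yields smoothness over $\mathbb{Z}_\ell$.
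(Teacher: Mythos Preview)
Your proof is correct and follows essentially the same route as the paper's: finite presentation is clear, flatness comes from the preceding lemma, the generic fibre is smooth by Cartier, the special fibre is smooth by hypothesis, and the fibrewise criterion for smoothness finishes the argument. The only cosmetic difference is that the paper invokes a formulation of the criterion that also requires the fibres to have equal dimension and then observes this follows from flatness, whereas your citation of EGA~IV~17.8.2 bypasses that step.
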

\begin{proof}
In order for a scheme $\underline{\mathcal{F}}\bigm/\operatorname{Spec} \mathbb{Z}_\ell$ to be smooth, it is necessary and sufficient that it is locally finitely presented and flat, with fibers that are smooth varieties all of the same dimension. Finite presentation is obvious in our context, and flatness follows from the previous lemma. The dimension of the fibers is locally constant by flatness, hence constant since the only open subset of $\operatorname{Spec} \mathbb{Z}_\ell$ containing the closed point is all of $\operatorname{Spec} \mathbb{Z}_\ell$. It remains to show smoothness of the fibers: the generic fiber is smooth by Cartier's theorem (\cite[§11.4]{MR547117}), and the special fiber is smooth by assumption.
\end{proof}

We finally come to the central result of this section:

\begin{lemma}\label{lemma_SmoothnessNew}
For all $\ell$ sufficiently large (depending only on $A/K$), for all $\mathbb{Z}_\ell$-bases $\widehat{e}_1, \ldots, \widehat{e}_{2g}$ of $T_\ell A$, and for all subsets $I$ of $\{1,\ldots,2g\}$, the stabilizer $\underline{\mathcal{G}}_I$ in $\underline{H}_\ell$ of the vectors $\widehat{e_i}$ (for $i \in I$) is smooth over $\mathbb{Z}_\ell$.
\end{lemma}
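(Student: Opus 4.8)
The strategy is to realize $\underline{\mathcal{G}}_I$ as the Zariski closure of a group scheme over $\mathbb{Q}_\ell$ cut out by a controlled number of equations of controlled degree, and then apply Proposition \ref{prop_PositiveCharCartier} together with Lemma \ref{lemma_SmoothOverFlImpliesSmoothOverZl}. First I would note that, by Lemma \ref{lemma_SmoothOverFlImpliesSmoothOverZl}, it suffices to prove that the special fiber $\underline{\mathcal{G}}_I \times_{\mathbb{Z}_\ell} \mathbb{F}_\ell$ is smooth over $\mathbb{F}_\ell$. The key observation is that imposing the conditions $M\widehat{e}_i = \widehat{e}_i$ for $i \in I$ amounts to intersecting $\underline{H}_\ell$ with at most $2g \cdot (2g) = 4g^2$ affine-linear equations in the matrix entries (after fixing the basis, $M\widehat{e}_i = \widehat{e}_i$ is $2g$ linear equations per index $i$, and $|I| \le 2g$), hence equations of degree $1$. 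So if $\underline{H}_\ell$ is itself cut out in $\operatorname{GL}_{2g,\mathbb{Z}_\ell}$ by a bounded number of equations of bounded degree — bounds depending only on $g$ — then the same is true of $\underline{\mathcal{G}}_I$, with bounds depending only on $g$.

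The main obstacle is therefore to produce a uniform bound (in $\ell$) on the number and degree of the defining equations of $\underline{H}_\ell$ inside $\operatorname{GL}_{2g,\mathbb{Z}_\ell}$. For good primes $\ell$ this is where Theorem \ref{thm_FinitelyManyModels} enters: the derived group $\underline{S}_\ell$, after base change to $\overline{\mathbb{F}_\ell}$, is $\operatorname{GL}_{2g,\overline{\mathbb{F}_\ell}}$-conjugate to one of finitely many fixed $\mathbb{Z}$-groups $\underline{J}_1,\ldots,\underline{J}_k$; combining this with part (5) of Theorem \ref{thm_UnconditionalMT}, which identifies the center $\underline{C}_\ell$ with the base change of a fixed $\mathbb{Z}[1/N]$-torus $\underline{C}$, one sees that $\SpecialFiber{H} = \SpecialFiber{C} \cdot \SpecialFiber{S}$ lies in a family of subgroups of $\operatorname{GL}_{2g}$ that — up to conjugation — is finite and independent of $\ell$. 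A finite list of fixed algebraic groups over $\mathbb{Z}$ (or $\mathbb{Z}[1/N]$) is cut out by some fixed finite set of polynomials of some fixed degree, and conjugation by an element of $\operatorname{GL}_{2g}$ does not change the number of equations and changes degrees by at most a factor of $2g$ (absorbing the inverse via Cramer's rule, at the cost of clearing the determinant, which is already handled in the coordinate ring of $\operatorname{GL}_{2g}$). Hence for all good $\ell$ the group $\underline{H}_\ell$, and therefore $\underline{\mathcal{G}}_I$, is cut out in $\operatorname{GL}_{2g,\mathbb{F}_\ell}$ by at most $m$ equations of degree at most $d$ for constants $m=m(g)$, $d=d(g)$.

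With this in hand, Proposition \ref{prop_PositiveCharCartier} applies directly: there is a constant $c = c(2g, d(g), m(g))$, depending only on $g$, such that for every $\ell > c$ the group $\underline{\mathcal{G}}_I \times_{\mathbb{Z}_\ell}\mathbb{F}_\ell$ is smooth over $\mathbb{F}_\ell$. Enlarging the finite set of excluded primes to also contain the finitely many bad primes of $A/K$ (Definition \ref{def_BadPrimes}), we conclude that for all sufficiently large $\ell$ — a bound depending only on $A/K$ — the special fiber of $\underline{\mathcal{G}}_I$ is smooth, whence $\underline{\mathcal{G}}_I$ is smooth over $\mathbb{Z}_\ell$ by Lemma \ref{lemma_SmoothOverFlImpliesSmoothOverZl}. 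I expect the one point requiring genuine care to be the bookkeeping that conjugation by a variable matrix in $\operatorname{GL}_{2g}$ really does preserve the ``bounded number of bounded-degree equations'' property uniformly; everything else is assembling results already quoted in the excerpt.
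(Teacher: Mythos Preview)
Your overall strategy---reduce to the special fiber via Lemma \ref{lemma_SmoothOverFlImpliesSmoothOverZl}, bound the defining equations, and invoke Proposition \ref{prop_PositiveCharCartier}---matches the paper's. The genuine gap is the step where you assert that $\SpecialFiber{H}$ itself is cut out in $\operatorname{GL}_{2g,\overline{\mathbb{F}_\ell}}$ by a bounded number of equations of bounded degree. Theorems \ref{thm_UnconditionalMT}(5) and \ref{thm_FinitelyManyModels} tell you that $\SpecialFiber{C}$ and $\SpecialFiber{S}$ are \emph{each} conjugate, by possibly \emph{different} elements of $\operatorname{GL}_{2g}(\overline{\mathbb{F}_\ell})$, to base changes of fixed $\mathbb{Z}$-groups; they do \emph{not} assert that the internal product $\SpecialFiber{H}=\SpecialFiber{C}\cdot\SpecialFiber{S}$ is conjugate to one of finitely many fixed groups. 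Passing from bounded equations for the two factors to bounded equations for the image of the multiplication map inside $\operatorname{GL}_{2g}$ is an elimination-theoretic step you have not carried out, and it is not obvious it can be made uniform in $\ell$. (The concern you flag at the end---that a single conjugation preserves bounded equations---is actually harmless; the real difficulty is combining two independent conjugations and then forming the internal product.)

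The paper sidesteps this obstacle by never attempting to bound equations for $\SpecialFiber{H}$. Instead it works with the \emph{external} product $\mathcal{C}\times\mathcal{S}\subseteq\operatorname{GL}_{4g,\overline{\mathbb{F}_\ell}}$, which is trivially defined by bounded equations (just the union of the equations for each factor, each conjugated once). It pulls back $(\underline{\mathcal{G}}_I)_{\overline{\mathbb{F}_\ell}}$ along the surjective multiplication map $p:(c,s)\mapsto\varphi_C(c)\varphi_S(s)$, applies Proposition \ref{prop_PositiveCharCartier} to the pullback $\mathcal{F}=p^*(\underline{\mathcal{G}}_I)_{\overline{\mathbb{F}_\ell}}$, and then transfers reducedness back to $(\underline{\mathcal{G}}_I)_{\overline{\mathbb{F}_\ell}}$ via the injection of coordinate rings that $p$ induces. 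This pullback-and-transfer trick is the missing idea in your proposal.
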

\begin{proof}
Notice first that $\underline{\mathcal{G}}_I$ can be obtained as the $\mathbb{Z}_\ell$-Zariski closure of the $\mathbb{Q}_\ell$-group scheme
\[
\left\{ M \in H_\ell \bigm\vert M\widehat{e_i}=\widehat{e_i} \; \forall i \in I \right\}.
\]
By lemma \ref{lemma_SmoothOverFlImpliesSmoothOverZl} it then suffices to prove smoothness over $\mathbb{F}_\ell$, and to do this we can base-change to $\overline{\mathbb{F}_\ell}$. We can also assume that $\ell$ is a good prime (cf.~definition \ref{def_BadPrimes}).
 By theorems \ref{thm_UnconditionalMT} and \ref{thm_FinitelyManyModels} there are algebraic subgroups $\mathcal{S}:=(\underline{J}_i)_{\overline{\mathbb{F}_\ell}}$ and $\mathcal{C}:=\underline{C} _{\overline{\mathbb{F}_\ell}}$ of $\operatorname{GL}_{2g,\overline{\mathbb{F}_\ell}}$ such that $(\underline{H}_\ell)_{\overline{\mathbb{F}_\ell}}$ is reductive, with center conjugated to $\mathcal{C}$ and derived subgroup conjugated to $\mathcal{S}$. In particular, we can find isomorphisms $\varphi_C : \mathcal{C} \to (\underline{C}_\ell)_{\overline{\mathbb{F}_\ell}}$ and $\varphi_S : \mathcal{S} \to (\underline{S}_\ell)_{\overline{\mathbb{F}_\ell}}$ that are given by conjugation by an element of $\operatorname{GL}_{2g}(\overline{\mathbb{F}_\ell})$, and consider the map
\[
\begin{array}{cccc}
p : & \mathcal{C} \times \mathcal{S} &  \to & \left(\underline{H}_\ell\right)_{\overline{\mathbb{F}_\ell}} \\
& (c,s) & \mapsto & \varphi_C(c)\varphi_S(s).
\end{array}
\]
Notice that $p$ is given by the composition of the morphism $(\varphi_C,\varphi_S)$ with the multiplication map $m:\operatorname{GL}_{2g,\overline{\mathbb{F}_\ell}} \times \operatorname{GL}_{2g,\overline{\mathbb{F}_\ell}} \to \operatorname{GL}_{2g,\overline{\mathbb{F}_\ell}}$. Observe further that the polynomials defining $m$ are clearly independent of $\ell$, because $m$ comes from base-change from the universal multiplication map $m:\operatorname{GL}_{2g,\mathbb{Z}} \times \operatorname{GL}_{2g,\mathbb{Z}} \to \operatorname{GL}_{2g,\mathbb{Z}}$. Moreover, since $\varphi_C$ and $\varphi_S$ are simply given by linear changes of basis, also the polynomials defining $\varphi_C$ and $\varphi_S$ have degree bounded independently of $\ell$. It follows that the polynomials defining $p$ have degree bounded independently of $\ell$.

Consider now the pullback $\mathcal{F}:=p^*\left((\underline{\mathcal{G}}_I)_{\overline{\mathbb{F}_\ell}}\right) \subseteq \mathcal{C} \times \mathcal{S}$: since $(\underline{\mathcal{G}}_I)_{\overline{\mathbb{F}_\ell}} \hookrightarrow (\underline{H}_\ell)_{\overline{\mathbb{F}_\ell}}$ is a closed embedding, $\mathcal{F} \hookrightarrow \mathcal{C} \times \mathcal{S}$ is again a closed embedding. We claim that $\mathcal{F}$, as a subgroup of $\operatorname{GL}_{2g,\overline{\mathbb{F}_\ell}} \times \operatorname{GL}_{2g,\overline{\mathbb{F}_\ell}} \subseteq \operatorname{GL}_{4g,\overline{\mathbb{F}_\ell}}$, is defined by equations whose number and degree are bounded independently of $\ell$ and of the vectors $\widehat{e_i}$. To see this, notice first that $\mathcal{C} \times \mathcal{S}$ is defined by equations bounded in number and degree -- indeed, up to a linear change of coordinates (which does not alter neither the number nor the total degree of the involved polynomials), these are the same equations that define $\underline{C}$ and the group $\underline{J}_i$ over $\mathbb{Z}$, and there are only finitely many groups $\underline{J}_i$ to consider. Next remark that the conditions $M \widehat{e_i} = \widehat{e_i}$ that define $\underline{\mathcal{G}}_I$ in $\underline{H}_\ell$ are given in coordinates by no more than $(2g)^2$ linear equations ($2g$ linear equations for each vector, and at most $2g$ vectors), each of which pulls back via $p^*$ to a single equation in the coordinate ring of $\operatorname{GL}_{4g,\overline{\mathbb{F}_\ell}}$. Finally, the degree of these equations is bounded independently of $\ell$, since it only depends on the degrees of the polynomials defining $p$, which as already proved are independent of $\ell$. It follows from proposition \ref{prop_PositiveCharCartier} that for $\ell$ large enough $\mathcal{F}$ is smooth, hence its coordinate ring is reduced. Finally, notice that $p$ induces an injection of the coordinate ring of $(\underline{\mathcal{G}}_I)_{\overline{\mathbb{F}_\ell}}$ in that of $\mathcal{F}$, so since the latter is reduced the same is true for the former: $(\underline{\mathcal{G}}_I)_{\overline{\mathbb{F}_\ell}}$ is then smooth by theorem \ref{thm_ConditionsSmoothness}.
\end{proof}

An easy variant of the previous proof also yields:

\begin{lemma}\label{lemma_SmoothnessNew2}
Let $\lambda : \operatorname{GSp}_{2n,\mathbb{Z}_\ell} \to \mathbb{G}_{m,\mathbb{Z}_\ell}$ be the (algebraic) multiplier character. With the notation of the previous lemma, the $\mathbb{Z}_\ell$-algebraic group
\[
\underline{\mathcal{G}}_I^{(1)}=\left\{ M \in \underline{H}_\ell \bigm\vert M h = h \quad \forall h \in H, \; \lambda(M)=1 \right\}
\]
is smooth over $\mathbb{Z}_\ell$ for all $\ell$ larger than some bound that only depends on $A/K$.
\end{lemma}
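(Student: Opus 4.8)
The plan is to rerun the proof of Lemma~\ref{lemma_SmoothnessNew} almost word for word; the only genuinely new input is that imposing the condition $\lambda(M)=1$ costs a single extra equation, of degree bounded independently of $\ell$. Thus, as in that proof, by Lemma~\ref{lemma_SmoothOverFlImpliesSmoothOverZl} it suffices to show that $\underline{\mathcal{G}}_I^{(1)}$ is smooth over $\mathbb{F}_\ell$, and we may base-change to $\overline{\mathbb{F}_\ell}$ and assume that $\ell$ is a good prime (Definition~\ref{def_BadPrimes}). Using Theorems~\ref{thm_UnconditionalMT} and~\ref{thm_FinitelyManyModels} exactly as before, one constructs the surjective --- hence faithfully flat --- morphism $p\colon\mathcal{C}\times\mathcal{S}\to(\underline{H}_\ell)_{\overline{\mathbb{F}_\ell}}$, $(c,s)\mapsto\varphi_C(c)\varphi_S(s)$, whose defining polynomials have degree bounded independently of $\ell$.

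Next I would identify the equations cutting out $(\underline{\mathcal{G}}_I^{(1)})_{\overline{\mathbb{F}_\ell}}$ inside $(\underline{H}_\ell)_{\overline{\mathbb{F}_\ell}}$. The conditions defining the stabilizer of $H$ reduce modulo $\ell$ to at most $(2g)^2$ linear equations of precisely the kind handled in the proof of Lemma~\ref{lemma_SmoothnessNew} (at most $2g$ linear forms for each of the at most $2g$ generators of $H$, those attached to a generator of order $\ell^0$ being vacuous). The one new condition is $\lambda(M)=1$: since $\underline{H}_\ell\subseteq\operatorname{GSp}_{2g,\mathbb{Z}_\ell}$, the multiplier is the restriction to $\underline{H}_\ell$ of an explicit quadratic polynomial in the matrix entries --- for any $(j,k)$ with $J_{jk}\neq0$ one has $\lambda(M)=(M^{t}JM)_{jk}/J_{jk}$, where $J$ is the symplectic form --- so $\lambda(M)=1$ is a single equation of degree $2$. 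Pulling everything back along $p$, whose components have $\ell$-independent degree, we find that $\mathcal{F}^{(1)}:=p^{*}\bigl((\underline{\mathcal{G}}_I^{(1)})_{\overline{\mathbb{F}_\ell}}\bigr)\subseteq\mathcal{C}\times\mathcal{S}\subseteq\operatorname{GL}_{4g,\overline{\mathbb{F}_\ell}}$ is cut out --- just as the group $\mathcal{F}$ in the previous proof, plus this one extra equation --- by finitely many equations of degree bounded independently of $\ell$, of $I$, and of the chosen lifts $\widehat{e}_i$.

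From here the argument closes exactly as in Lemma~\ref{lemma_SmoothnessNew}: Proposition~\ref{prop_PositiveCharCartier} shows that $\mathcal{F}^{(1)}$ is smooth as soon as $\ell$ exceeds a bound depending only on $A/K$, so its coordinate ring is reduced; as $p$ is faithfully flat, so is the restriction $\mathcal{F}^{(1)}\to(\underline{\mathcal{G}}_I^{(1)})_{\overline{\mathbb{F}_\ell}}$, whence $p^{*}$ embeds the coordinate ring of $(\underline{\mathcal{G}}_I^{(1)})_{\overline{\mathbb{F}_\ell}}$ into a reduced ring and is thus itself reduced; and then $(\underline{\mathcal{G}}_I^{(1)})_{\overline{\mathbb{F}_\ell}}$ is smooth by Theorem~\ref{thm_ConditionsSmoothness}, hence $\underline{\mathcal{G}}_I^{(1)}$ is smooth over $\mathbb{F}_\ell$, and over $\mathbb{Z}_\ell$ by Lemma~\ref{lemma_SmoothOverFlImpliesSmoothOverZl} (noting, as before, that $\underline{\mathcal{G}}_I^{(1)}$ is the $\mathbb{Z}_\ell$-Zariski closure of its generic fiber).

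I do not expect a serious obstacle: the whole content is the elementary remark that the multiplier character contributes only one equation, of bounded degree, so the uniformity in $\ell$ required to apply Proposition~\ref{prop_PositiveCharCartier} survives. The one step deserving (minor) care is the bookkeeping around the congruence conditions ``$Mh=h$ for all $h\in H$'': one must check that modulo $\ell$ they become linear equations of the type treated in Lemma~\ref{lemma_SmoothnessNew}, and that $\underline{\mathcal{G}}_I^{(1)}$ is flat over $\mathbb{Z}_\ell$ so that Lemma~\ref{lemma_SmoothOverFlImpliesSmoothOverZl} applies --- but this is entirely parallel to the treatment of $\underline{\mathcal{G}}_I$ there.
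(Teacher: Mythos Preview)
Your proposal is correct and follows essentially the same route as the paper's own proof: both reduce to the previous lemma and observe that the single additional condition $\lambda(M)=1$ contributes one extra equation whose degree is bounded independently of $\ell$ (the paper phrases this as ``$\lambda$ comes by base-change from a universal morphism $\operatorname{GSp}_{2g,\mathbb{Z}}\to\mathbb{G}_{m,\mathbb{Z}}$'', you give the explicit quadratic formula), so Proposition~\ref{prop_PositiveCharCartier} still applies. Your write-up is in fact slightly more detailed than the paper's, which compresses the argument to a few lines; the only remark worth making is that ``surjective --- hence faithfully flat'' is not a general implication, but it is valid here because $p$ is a surjective homomorphism of smooth algebraic groups over a field.
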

\begin{proof}
Arguing as in the proof of lemma \ref{lemma_SmoothnessNew}, it suffices to show that $p^* \left(\underline{\mathcal{G}}_I^{(1)}\right)_{\overline{\mathbb{F}_\ell}}$ is defined by equations whose number and degree are bounded independently of $\ell$, of $\widehat{e_i}$, and of $I$. 
This follows easily from the same argument as in the previous proof, because the equations defining $p^* \left(\underline{\mathcal{G}}_I^{(1)}\right)_{\overline{\mathbb{F}_\ell}}$ are the same as those defining $p^* \left(\underline{\mathcal{G}}_I\right)_{\overline{\mathbb{F}_\ell}}$, together with the single equation $\lambda(M)-1=0$, which is given by a polynomial whose degree is independent of $\ell$: indeed, the morphism $\lambda$ comes by base-change from a certain universal morphism $\lambda:\operatorname{GSp}_{2g,\mathbb{Z}} \to \mathbb{G}_{m,\mathbb{Z}}$, hence the polynomial that defines it does not depend on $\ell$.
\end{proof}

\begin{definition}\label{def_VeryGood} We shall say that the prime $\ell$ is \textit{very good} for $A/K$ if it is good and so large that all the groups $\underline{\mathcal{G}}_I$ and $\underline{\mathcal{G}}_I^{(1)}$ are smooth over $\mathbb{Z}_\ell$, for every $\mathbb{Z}_\ell$-basis of $T_\ell A$ and every subset $I$ of $\{1,\ldots,2g\}$.
\end{definition}


\subsection{Connected components}
In this section we show that the groups we are interested in have a bounded number of connected components, and relate this number to certain cohomology groups.

Recall from the previous section the notation $\underline{\mathcal{G}}_I$:  given a $\mathbb{Z}_\ell$-basis $\widehat{e}_1, \ldots, \widehat{e}_{2g}$ of $T_\ell A$ and a subset $I$ of $\{1,\ldots,2g\}$, the $\mathbb{Z}_\ell$-algebraic group $\underline{\mathcal{G}}_I$ is the stabilizer in $\underline{H}_\ell$ of the vectors $\widehat{e_i}$ for $i \in I$.

\begin{lemma}\label{lemma_ConnectedComponents}
There is a constant $B$, depending only on $A/K$, with the following property. 
For all primes $\ell$ that are good for $A$ and for all subgroups $H$ of $A[\ell]$, the number of connected components of
\[
\StabilizerAlg = \left\{ M \in \SpecialFiber{H} \bigm\vert Mh=h \quad \forall h \in H \right\}=(\underline{\mathcal{G}}_I)_{\mathbb{F}_\ell}
\]
does not exceed $B$.
\end{lemma}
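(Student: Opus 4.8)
The plan is to bound, more generously, the number of \emph{geometric} connected components of the stabilizer, i.e.\ those of $\StabilizerAlg\times_{\mathbb{F}_\ell}\overline{\mathbb{F}_\ell}=(\underline{\mathcal{G}}_I)_{\overline{\mathbb{F}_\ell}}$, where $I\subseteq\{1,\dots,2g\}$ and $\widehat{e}_1,\dots,\widehat{e}_{2g}$ is a $\mathbb{Z}_\ell$-basis of $T_\ell A$ lifting a set of generators of $H$, as in Section \ref{sect_Stabilizers}. This suffices, since base change from $\mathbb{F}_\ell$ to $\overline{\mathbb{F}_\ell}$ cannot decrease the number of connected components of a scheme of finite type. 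Simply counting the equations that cut out $(\underline{\mathcal{G}}_I)_{\overline{\mathbb{F}_\ell}}$ inside $\operatorname{GL}_{2g}$ is of no use unconditionally, because the equations defining $\underline{H}_\ell$ itself may have degree growing with $\ell$; instead I will exploit the bounded-complexity model of a finite cover of $\underline{\mathcal{G}}_I$ already constructed in the proof of Lemma \ref{lemma_SmoothnessNew}.

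Recall from that proof the morphism $p:\mathcal{C}\times\mathcal{S}\to(\underline{H}_\ell)_{\overline{\mathbb{F}_\ell}}$, $(c,s)\mapsto\varphi_C(c)\varphi_S(s)$, which is defined whenever $\ell$ is good --- this is precisely where Theorems \ref{thm_UnconditionalMT} and \ref{thm_FinitelyManyModels} enter, giving that $(\underline{H}_\ell)_{\overline{\mathbb{F}_\ell}}$ is connected reductive, that $\underline{S}_\ell$ is one of finitely many models, and that $\underline{C}_\ell$ is a conjugate of the fixed torus $\underline{C}$. As a connected reductive group is the product of its centre and its derived subgroup, $p$ is surjective with finite kernel. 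Put $\mathcal{F}:=p^{-1}\big((\underline{\mathcal{G}}_I)_{\overline{\mathbb{F}_\ell}}\big)\subseteq\mathcal{C}\times\mathcal{S}\subseteq\operatorname{GL}_{2g,\overline{\mathbb{F}_\ell}}\times\operatorname{GL}_{2g,\overline{\mathbb{F}_\ell}}$; then $p$ restricts to a surjective morphism $\mathcal{F}\to(\underline{\mathcal{G}}_I)_{\overline{\mathbb{F}_\ell}}$. The computation carried out in the proof of Lemma \ref{lemma_SmoothnessNew} shows that $\mathcal{F}$ is cut out inside $\operatorname{GL}_{2g}\times\operatorname{GL}_{2g}$ by a number of polynomial equations, of degrees, bounded only in terms of $g$, uniformly in $\ell$, in $H$, and in the chosen basis. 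I stress that this portion of the argument needs only that $\ell$ be good: the subsequent appeal to Proposition \ref{prop_PositiveCharCartier} to deduce \emph{smoothness}, which forced $\ell$ to be large, plays no role here.

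It remains to combine two elementary facts. First, a surjective morphism $f:X\to Y$ of schemes of finite type over a field induces a surjection $\pi_0(X)\twoheadrightarrow\pi_0(Y)$: each connected component of $X$ has connected image, hence lands in a single connected component of $Y$, and these images cover $Y$; thus $\#\pi_0\big((\underline{\mathcal{G}}_I)_{\overline{\mathbb{F}_\ell}}\big)\le\#\pi_0(\mathcal{F})$. Second, by a standard effective form of Bézout's theorem, a closed subvariety of $\mathbb{A}^N$ defined by polynomials of degree at most $d$ has at most $d^{N}$ irreducible components, and \emph{a fortiori} at most $d^N$ connected components. Since $\mathcal{F}$ lies in the affine space $\mathbb{A}^{2(2g)^2+2}_{\overline{\mathbb{F}_\ell}}$ underlying $\operatorname{GL}_{2g}\times\operatorname{GL}_{2g}$ and is cut out --- together with the two equations expressing invertibility of the matrix blocks --- by polynomials of $g$-bounded degree, we get $\#\pi_0(\mathcal{F})\le B$ for a constant $B=B(g)$. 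Combining the two estimates with the reduction to the geometric case gives the lemma.

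The real work, as for Lemma \ref{lemma_SmoothnessNew}, is the bookkeeping already done there: exhibiting a model of a finite cover of $\underline{\mathcal{G}}_I$ whose defining equations have complexity independent of $\ell$. Granting that, the present statement is a formal consequence of affine Bézout together with the behaviour of $\pi_0$ under surjective morphisms and under base change to $\overline{\mathbb{F}_\ell}$; the only point requiring a little vigilance is to observe that the bounded-complexity model is available for \emph{every} good prime, and not merely for the very good ones of Definition \ref{def_VeryGood}.
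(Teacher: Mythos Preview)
Your argument is correct and follows essentially the same route as the paper: reduce to geometric components, pull back along the map $p:\mathcal{C}\times\mathcal{S}\to(\underline{H}_\ell)_{\overline{\mathbb{F}_\ell}}$ from the proof of Lemma~\ref{lemma_SmoothnessNew}, and apply an effective B\'ezout bound to the resulting system of equations of uniformly bounded degree. One small point: the equations defining $\mathcal{C}$ come from the torus $\underline{C}$ of Theorem~\ref{thm_UnconditionalMT}(5), which depends on $A/K$ and not just on $g$, so your constant should be $B(A/K)$ rather than $B(g)$---this is harmless since the lemma only claims dependence on $A/K$.
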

\begin{proof}
Notice first that it is enough to bound the number of $\overline{\mathbb{F}_\ell}$-points of the group of components of $\StabilizerAlg$, hence it is enough to consider the number of irreducible components of $\StabilizerAlg_{\overline{\mathbb{F}_\ell}}$. As in the proof of lemma \ref{lemma_SmoothnessNew}, we consider the pullback $p^*\StabilizerAlg_{\overline{\mathbb{F}_\ell}} \subseteq \mathcal{C} \times \mathcal{S} \subseteq \operatorname{GL}_{4g,\overline{\mathbb{F}_\ell}}$, and remark that since $p^*\StabilizerAlg_{\overline{\mathbb{F}_\ell}} \to \StabilizerAlg_{\overline{\mathbb{F}_\ell}}$ is onto, it suffices to bound the number of irreducible components of $p^*\StabilizerAlg_{\overline{\mathbb{F}_\ell}}$. Again as in the proof of lemma \ref{lemma_SmoothnessNew}, we know that $p^*\StabilizerAlg_{\overline{\mathbb{F}_\ell}}$ is defined by equations whose number and degree are bounded independently of $\ell$ and $H$.

By a variant of Bézout's theorem (see \cite[Theorem 7.1]{MR1390322} for a precise statement), this implies that the number of irreducible components of $p^*\StabilizerAlg_{\overline{\mathbb{F}_\ell}}$ is bounded uniformly in $\ell$ and $H$, hence the same is true for the number of connected components of $\StabilizerAlg_{\overline{\mathbb{F}_\ell}}$, whence a constant $B$ such that $|\StabilizerAlg/\Smooth| \leq B$ for all good primes $\ell$ and all subgroups $H$ of $A[\ell]$.
\end{proof}

Similarly to what we did with lemmas \ref{lemma_SmoothnessNew} and \ref{lemma_SmoothnessNew2}, a simple variant of the same argument shows
\begin{lemma}\label{lemma_ConnectedComponents2}
There is a constant $B_1$, depending only on $A/K$, with the following property. 
For all primes $\ell$ that are good for $A$ and for all subgroups $H$ of $A[\ell]$, the number of connected components of
\[
\StabilizerAlg_1 = \left\{ M \in \SpecialFiber{H} \bigm\vert Mh=h \quad \forall h \in H, \; \lambda(M)=1 \right\}=\left(\underline{\mathcal{G}}_I^{(1)}\right)_{\mathbb{F}_\ell}
\]
does not exceed $B_1$.
\end{lemma}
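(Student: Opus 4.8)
The plan is to repeat the proof of lemma \ref{lemma_ConnectedComponents} almost verbatim, the only change being that one carries along the extra defining condition $\lambda(M)=1$. First I would note, exactly as in that proof, that it suffices to bound the number of irreducible components of $\left(\StabilizerAlg_1\right)_{\overline{\mathbb{F}_\ell}}$, and that --- since the multiplication map $p:\mathcal{C}\times\mathcal{S}\to\left(\underline{H}_\ell\right)_{\overline{\mathbb{F}_\ell}}$ appearing in the proof of lemma \ref{lemma_SmoothnessNew} is surjective --- it is in turn enough to bound the number of irreducible components of the pullback $p^*\left(\StabilizerAlg_1\right)_{\overline{\mathbb{F}_\ell}}\subseteq\mathcal{C}\times\mathcal{S}\subseteq\operatorname{GL}_{4g,\overline{\mathbb{F}_\ell}}$.

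Next I would invoke the computation already carried out in the proof of lemma \ref{lemma_SmoothnessNew2}: the scheme $p^*\left(\StabilizerAlg_1\right)_{\overline{\mathbb{F}_\ell}}$ is cut out by the same equations that define $p^*\StabilizerAlg_{\overline{\mathbb{F}_\ell}}$, together with the single additional equation $\lambda(M)-1=0$; since $\lambda$ is obtained by base change from the universal multiplier character $\operatorname{GSp}_{2g,\mathbb{Z}}\to\mathbb{G}_{m,\mathbb{Z}}$, this extra equation is given by a polynomial of degree independent of $\ell$. Combining this with the bound on the number and the degree of the equations defining $p^*\StabilizerAlg_{\overline{\mathbb{F}_\ell}}$ established in the proof of lemma \ref{lemma_ConnectedComponents}, one concludes that $p^*\left(\StabilizerAlg_1\right)_{\overline{\mathbb{F}_\ell}}$ is defined inside the coordinate ring of $\operatorname{GL}_{4g,\overline{\mathbb{F}_\ell}}$ by finitely many equations whose number and degree are bounded by a constant depending only on $A/K$, uniformly in $\ell$, in $H$, and in the auxiliary choices of basis $\widehat{e}_1,\ldots,\widehat{e}_{2g}$ and of subset $I$.

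Finally I would apply the effective form of Bézout's theorem quoted as \cite[Theorem 7.1]{MR1390322} to deduce that the number of irreducible components of $p^*\left(\StabilizerAlg_1\right)_{\overline{\mathbb{F}_\ell}}$ is bounded uniformly in all these parameters; hence so is the number of connected components of $\StabilizerAlg_1$, which produces the desired constant $B_1$. I do not expect any genuine obstacle here: the whole argument is a transcription of the proof of lemma \ref{lemma_ConnectedComponents}, and the only point that requires a moment's attention --- checking that the condition $\lambda(M)=1$ pulls back under $p^*$ to a single polynomial equation of controlled degree --- was already dealt with in the proof of lemma \ref{lemma_SmoothnessNew2}.
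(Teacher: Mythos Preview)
Your proposal is correct and matches the paper's approach exactly: the paper simply states that ``a simple variant of the same argument'' as in lemma~\ref{lemma_ConnectedComponents} proves the result, and what you have written is precisely that variant spelled out in full. In particular, your observation that the extra condition $\lambda(M)=1$ contributes a single equation of bounded degree (as in the proof of lemma~\ref{lemma_SmoothnessNew2}) is the only point one needs to add.
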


\begin{lemma}
Let $\mathcal{G}$ be a finite étale group scheme of order $N$ over $\mathbb{F}_\ell$. The first cohomology group $H^1(\mathbb{F}_\ell,\mathcal{G})$ is finite, of order not exceeding $N$.
\end{lemma}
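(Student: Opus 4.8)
The plan is to reduce to Galois cohomology and then compute $H^1$ by hand. Recall that a finite étale group scheme $\mathcal{G}$ over $\mathbb{F}_\ell$ is the same datum as a finite abstract group $G$ with $|G|=N$ together with a continuous action of $\Gamma:=\operatorname{Gal}(\overline{\mathbb{F}_\ell}/\mathbb{F}_\ell)\cong\widehat{\mathbb{Z}}$, and that by the comparison between fppf (equivalently étale) cohomology of a field and Galois cohomology one has $H^1(\mathbb{F}_\ell,\mathcal{G})=H^1(\Gamma,G)$. Let $\phi\in\operatorname{Aut}(G)$ be the automorphism by which the Frobenius $F$ acts; note $\phi$ has finite order since $\operatorname{Aut}(G)$ is finite.

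First I would parametrise the continuous $1$-cocycles $c:\Gamma\to G$. Such a $c$ is a continuous crossed homomorphism, hence is determined by the single value $g:=c(F)$ through $c(F^n)=g\,\phi(g)\,\phi^2(g)\cdots\phi^{n-1}(g)$. Conversely every $g\in G$ arises this way: this formula defines a crossed homomorphism on the dense subgroup $\mathbb{Z}\subseteq\widehat{\mathbb{Z}}$, and since $\phi$ has finite order $d$ and $G$ is finite, one checks that $c(F^n)$ depends only on $n$ modulo $d\cdot\operatorname{ord}(N_d(g))$ with $N_d(g)=g\,\phi(g)\cdots\phi^{d-1}(g)$; thus $c$ factors through a finite quotient of $\widehat{\mathbb{Z}}$ and extends to a continuous cocycle $c_g$ on $\Gamma$. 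Hence $g\mapsto c_g$ is a bijection from $G$ onto the set $Z^1(\Gamma,G)$ of continuous $1$-cocycles.

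Next I would unwind the coboundary relation: two cocycles $c_g$ and $c_{g'}$ are cohomologous precisely when there is $h\in G$ with $g'=h^{-1}\,g\,\phi(h)$. In other words $H^1(\Gamma,G)$ is the orbit set of $G$ acting on itself by the twisted conjugation $h\star g:=h\,g\,\phi(h)^{-1}$. Since these orbits partition the finite set $G$, their number is finite and at most $|G|=N$, which is exactly the claim. In the commutative case this specialises to the familiar identification $H^1(\mathbb{F}_\ell,\mathcal{G})=G/(F-1)G$, which has order $|G^F|=|H^0(\mathbb{F}_\ell,\mathcal{G})|\leq N$.

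There is no genuine obstacle here; the only point deserving care is the verification that every $g\in G$ yields a \emph{continuous} cocycle on the profinite group $\widehat{\mathbb{Z}}$, and not merely a cocycle on the abstract group $\mathbb{Z}$. This is precisely where the finiteness of $G$ (equivalently, of $\mathcal{G}$) is used, and it is what makes the set of cocycles exactly $G$ rather than something larger.
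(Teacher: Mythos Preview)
Your proof is correct and follows essentially the same approach as the paper: reduce to Galois cohomology of $\widehat{\mathbb{Z}}$ acting on the finite group $G=\mathcal{G}(\overline{\mathbb{F}_\ell})$, and observe that a continuous $1$-cocycle is determined by its value on the Frobenius, hence there are at most $|G|=N$ cocycles and a fortiori at most $N$ cohomology classes. You go further than the paper by showing that every $g\in G$ actually yields a continuous cocycle (so $Z^1(\Gamma,G)\cong G$ as sets) and by identifying $H^1$ explicitly with the set of $\phi$-twisted conjugacy classes, but these refinements are not needed for the stated bound.
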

\begin{proof}
Recall (\cite[§6.4]{MR547117}) that the association $\mathcal{G} \mapsto \mathcal{G}(\overline{\mathbb{F}_\ell})$ establishes an equivalence between the category of étale group schemes over $\mathbb{F}_\ell$ and that of finite groups with a continuous action of $\operatorname{Gal}\left(\overline{\mathbb{F}_\ell}/\mathbb{F}_\ell \right)$. To prove the lemma it is thus enough to consider the cohomology $H^1(\mathbb{F}_\ell,G)$ of a finite group $G$ of order $N$ equipped with a continuous action of $\hat{\mathbb{Z}} \cong \operatorname{Gal}\left(\overline{\mathbb{F}_\ell}/\mathbb{F}_\ell \right)$. An element of $H^1\left(\hat{\mathbb{Z}} , G\right)$ is represented by a continuous map $\hat{\mathbb{Z}}\to G$, which in turn is uniquely determined by the image of a topological generator of $\hat{\mathbb{Z}}$: it follows that there are no more than $|G|=N$ such maps, hence that the order of $H^1(\hat{\mathbb{Z}},G)$ is bounded by $N$ as claimed.
\end{proof}

\begin{lemma}\label{lemma_Cohomology}
Let $\mathcal{G}$ be a linear algebraic group over $\mathbb{F}_\ell$. The order of $H^1(\mathbb{F}_\ell,\mathcal{G})$ is at most the order of $H^1(\mathbb{F}_\ell,\mathcal{G}/\mathcal{G}^0)$, so in particular the order of $H^1(\mathbb{F}_\ell,\mathcal{G})$ does not exceed the order of the group of components of $\mathcal{G}$.
\end{lemma}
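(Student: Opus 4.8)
The plan is to compare the cohomology of $\mathcal{G}$ with that of its group of components by means of the short exact sequence of algebraic groups over $\mathbb{F}_\ell$
\[
1 \longrightarrow \mathcal{G}^0 \longrightarrow \mathcal{G} \xrightarrow{\ \pi\ } \pi_0(\mathcal{G}) \longrightarrow 1,
\]
where $\pi_0(\mathcal{G}) := \mathcal{G}/\mathcal{G}^0$ is the (finite, étale) group scheme of connected components and $\mathcal{G}^0$ is normal in $\mathcal{G}$. First I would recall that this sequence induces an exact sequence of pointed sets
\[
H^1(\mathbb{F}_\ell,\mathcal{G}^0) \longrightarrow H^1(\mathbb{F}_\ell,\mathcal{G}) \xrightarrow{\ \pi_*\ } H^1(\mathbb{F}_\ell,\pi_0(\mathcal{G})),
\]
and that, by Lang's theorem, $H^1(\mathbb{F}_\ell,\mathcal{G}^0)$ is trivial, because $\mathcal{G}^0$ is a connected algebraic group over the finite field $\mathbb{F}_\ell$. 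This already shows that the fibre of $\pi_*$ over the neutral class is a single point; the remaining task is to upgrade this to the injectivity of $\pi_*$, after which the inequality $\lvert H^1(\mathbb{F}_\ell,\mathcal{G})\rvert \le \lvert H^1(\mathbb{F}_\ell,\pi_0(\mathcal{G}))\rvert$ is immediate.

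To control the other fibres I would use the standard twisting formalism for nonabelian cohomology. Given a class $\xi\in H^1(\mathbb{F}_\ell,\mathcal{G})$, choose a cocycle representing it; since $\mathcal{G}^0$ is normal in $\mathcal{G}$, twisting the whole exact sequence by this cocycle produces an exact sequence $1 \to {}_\xi(\mathcal{G}^0) \to {}_\xi\mathcal{G} \to {}_\xi(\pi_0\mathcal{G}) \to 1$, together with twisting bijections $H^1(\mathbb{F}_\ell,{}_\xi\mathcal{G}) \cong H^1(\mathbb{F}_\ell,\mathcal{G})$ and $H^1(\mathbb{F}_\ell,{}_\xi(\pi_0\mathcal{G})) \cong H^1(\mathbb{F}_\ell,\pi_0(\mathcal{G}))$ that send the neutral classes to $\xi$ and to $\pi_*(\xi)$ respectively and are compatible with the maps induced by $\pi$. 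Under these identifications the fibre $\pi_*^{-1}(\pi_*(\xi))$ corresponds to the fibre over the neutral class of $H^1(\mathbb{F}_\ell,{}_\xi\mathcal{G}) \to H^1(\mathbb{F}_\ell,{}_\xi(\pi_0\mathcal{G}))$, which by exactness is the image of $H^1(\mathbb{F}_\ell,{}_\xi(\mathcal{G}^0))$. But ${}_\xi(\mathcal{G}^0)$ is an inner form of $\mathcal{G}^0$, hence again a connected algebraic group over $\mathbb{F}_\ell$, so Lang's theorem gives $H^1(\mathbb{F}_\ell,{}_\xi(\mathcal{G}^0)) = \{*\}$ and the fibre over $\pi_*(\xi)$ is the single point $\{\xi\}$. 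Since $\xi$ was arbitrary, $\pi_*$ is injective. (If $\mathcal{G}$ is not smooth one reads $H^1$ as fppf cohomology and uses that Lang's theorem persists for connected group schemes of finite type over finite fields; in the application to Lemma~\ref{lemma_ConnectedComponents} for very good primes the groups involved are smooth anyway, so the classical statement suffices.)

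Finally, for the last assertion I would just observe that $\pi_0(\mathcal{G})$ is a finite étale group scheme over $\mathbb{F}_\ell$ whose order equals the number of geometric connected components of $\mathcal{G}$, i.e.\ the order of the group of components; the previous lemma then gives $\lvert H^1(\mathbb{F}_\ell,\pi_0(\mathcal{G}))\rvert \le \lvert\pi_0(\mathcal{G})\rvert$, which together with the inequality obtained above completes the proof. The only genuinely delicate point of the argument is the bookkeeping with the twisted exact sequence; everything else is formal once Lang's theorem is in hand.
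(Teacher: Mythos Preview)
Your argument is correct and follows the same skeleton as the paper's proof: the exact sequence $1\to\mathcal{G}^0\to\mathcal{G}\to\mathcal{G}/\mathcal{G}^0\to 1$, Lang's theorem for $\mathcal{G}^0$, and then the previous lemma for the component group. The paper simply writes that the segment $H^1(\mathbb{F}_\ell,\mathcal{G}^0)\to H^1(\mathbb{F}_\ell,\mathcal{G})\to H^1(\mathbb{F}_\ell,\mathcal{G}/\mathcal{G}^0)$ is exact, that the first term vanishes by Lang, and that ``the first statement follows''.

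Where you differ is in rigour rather than strategy. You correctly observe that in nonabelian cohomology exactness is only at the level of pointed sets, so triviality of $H^1(\mathbb{F}_\ell,\mathcal{G}^0)$ a priori controls only the fibre of $\pi_*$ over the neutral class; you then supply the twisting argument (each ${}_\xi(\mathcal{G}^0)$ is still connected, so Lang applies again) to make $\pi_*$ genuinely injective. The paper leaves this step implicit. Your version is therefore the more complete of the two, and the extra work is exactly the ``delicate bookkeeping'' you flag; nothing in your proof is superfluous.
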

\begin{proof}
The long exact sequence in cohomology associated with the sequence
\[
1 \to \mathcal{G}^0 \to \mathcal{G}\to \mathcal{G}/\mathcal{G}^0 \to 1
\]
contains the segment
$
H^1(\mathbb{F}_\ell,\mathcal{G}^0) \to H^1(\mathbb{F}_\ell,\mathcal{G}) \to H^1(\mathbb{F}_\ell,\mathcal{G}/\mathcal{G}^0),
$
where the first term is trivial by Lang's theorem (any connected algebraic group over a finite field has trivial $H^1$, \cite[Theorem 2]{MR0086367}). The first statement follows. The second is then a consequence of the previous lemma and of the fact that $\mathcal{G}/\mathcal{G}^0$ is étale by \cite[§6.7]{MR547117}.
\end{proof}

\subsection{Proof of theorem \ref{thm_mw}}
We now come to the core of the proof of theorem \ref{thm_mw}. Let $H$ be a finite subgroup of $A[\ell^\infty]$ of exponent $\ell^n$. As shown in \cite[Proposition 3.9]{MR2862374}, the degree $\left[ K(H) \cap K(\mu_{\ell^\infty}) : K \right]$ is closely related to the multipliers of automorphisms in $\operatorname{Gal}\left(K(A[\ell^n])/K(H) \right)$, thought of as elements of $\operatorname{GSp}_{2g}(\mathbb{Z}/\ell^n\mathbb{Z})$: through the next few lemmas we shall therefore investigate the image of the multiplier map when restricted to $\operatorname{Gal}\left(K(A[\ell^n])/K(H) \right)$.

\begin{lemma}\label{lemma_Core}
Let $A/K$ be an abelian variety over a number field. For all primes $\ell$ and for all finite subgroups $H$ of $A[\ell]$ there exists $m \in \left\{0,1\right\}$ such that
\[
\left[ K(\mu_{\ell^m}) : K \right] \uguale \left[ K(H) \cap K(\mu_{\ell}) : K \right],
\]
that is to say, there exists $D>0$ (depending on $A/K$) with the following property: for every $\ell$ and every subgroup $H$ of $A[\ell]$ there exists $m \in \left\{0,1\right\}$ such that
\begin{equation}\label{eq_WeakMu5}
D^{-1} \left[ K(H) \cap K(\mu_{\ell}) : K \right] \leq \left[ K(\mu_{\ell^m}) : K \right] \leq D \left[ K(H) \cap K(\mu_{\ell}) : K \right].
\end{equation}
\end{lemma}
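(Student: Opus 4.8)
The plan is to reinterpret the degree $[K(H)\cap K(\mu_\ell):K]$ as an index involving the multiplier character, and then to feed in the smoothness and component‑count results established above. After the preliminary reductions we may assume $A$ principally polarized, so $G_\ell\subseteq\operatorname{GSp}_{2g}(\mathbb{F}_\ell)$; Galois‑equivariance of the Weil pairing identifies the mod‑$\ell$ cyclotomic character with $\lambda\circ\rho_\ell$, where $\lambda$ denotes the multiplier. Hence $\operatorname{Gal}(K(\mu_\ell)/K)\cong\lambda(G_\ell)$, and writing $\operatorname{Stab}(H):=\operatorname{Gal}(K(A[\ell])/K(H))=\{\sigma\in G_\ell\mid\sigma h=h\ \forall h\in H\}$, the field $K(H)\cap K(\mu_\ell)$ is the fixed field of the subgroup of $G_\ell$ generated by $\operatorname{Stab}(H)$ and $\ker\lambda$, so that
\[
[K(H)\cap K(\mu_\ell):K]=\frac{[K(\mu_\ell):K]}{|\lambda(\operatorname{Stab}(H))|}.
\]
Since $[K(\mu_\ell):K]\le\ell-1$, it is enough to prove that $|\lambda(\operatorname{Stab}(H))|$ is, up to a constant depending only on $A/K$, either bounded — in which case $m=1$ makes \eqref{eq_WeakMu5} hold — or $\uguale\ell$ — in which case $m=0$ works.

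\emph{Reduction to the algebraic stabilizer.} For the finitely many bad primes (Definition \ref{def_BadPrimes}) there are only finitely many subgroups $H\subseteq A[\ell]$, hence finitely many relevant pairs $(\ell,H)$, and all the degrees in \eqref{eq_WeakMu5} are then bounded in terms of $A/K$; so $m=0$ works after enlarging $D$. Now let $\ell$ be very good (Definition \ref{def_VeryGood}). Choosing an $\mathbb{F}_\ell$‑basis of $A[\ell]$ that contains a basis of $H$ and lifting it to a $\mathbb{Z}_\ell$‑basis of $T_\ell A$, the preliminary reductions together with Lemma \ref{lemma_Index} give $\operatorname{Stab}(H)\uguale\StabilizerAlg(\mathbb{F}_\ell)$ with $\StabilizerAlg=(\underline{\mathcal{G}}_I)_{\mathbb{F}_\ell}$; applying Lemma \ref{lemma_Index} once more to the homomorphism $\lambda$ yields $|\lambda(\operatorname{Stab}(H))|\uguale|\lambda(\StabilizerAlg(\mathbb{F}_\ell))|$, uniformly in $\ell$ and $H$. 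It thus suffices to understand $\lambda(\StabilizerAlg(\mathbb{F}_\ell))$.

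\emph{The dichotomy.} Set $\StabilizerAlg_1=(\underline{\mathcal{G}}_I^{(1)})_{\mathbb{F}_\ell}$, which is exactly the scheme‑theoretic kernel of $\lambda\colon\StabilizerAlg\to\mathbb{G}_{m,\mathbb{F}_\ell}$. For $\ell$ very good both $\StabilizerAlg$ and $\StabilizerAlg_1$ are smooth (Lemmas \ref{lemma_SmoothnessNew} and \ref{lemma_SmoothnessNew2}), so the quotient $\StabilizerAlg/\StabilizerAlg_1$ is a smooth subgroup scheme of $\mathbb{G}_{m,\mathbb{F}_\ell}$, hence equals either a finite étale $\mu_n$ with $\ell\nmid n$, or $\mathbb{G}_m$. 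In the first case $\StabilizerAlg^0$, being connected, maps trivially to $\mu_n$, so the surjection $\StabilizerAlg\to\mu_n$ factors through $\StabilizerAlg/\StabilizerAlg^0$ and $n\le|\StabilizerAlg/\StabilizerAlg^0|\le B$ by Lemma \ref{lemma_ConnectedComponents}; hence $|\lambda(\StabilizerAlg(\mathbb{F}_\ell))|\le n\le B$ is bounded, and we take $m=1$. In the second case $\lambda$ restricts to a surjection $\StabilizerAlg^0\twoheadrightarrow\mathbb{G}_m$ (its image in $\mathbb{G}_m$ is smooth, connected, and infinite, since $\StabilizerAlg/\StabilizerAlg^0$ is finite while $\lambda(\StabilizerAlg)=\mathbb{G}_m$ is not), whose kernel $\mathcal{N}$ is open in $\StabilizerAlg_1$, hence smooth with at most $B_1$ connected components by Lemma \ref{lemma_ConnectedComponents2}; the cohomology sequence of $1\to\mathcal{N}\to\StabilizerAlg^0\to\mathbb{G}_m\to1$, Lang's theorem (which kills $H^1(\mathbb{F}_\ell,\StabilizerAlg^0)$), and Lemma \ref{lemma_Cohomology} then give $[\mathbb{F}_\ell^\times:\lambda(\StabilizerAlg^0(\mathbb{F}_\ell))]\le|H^1(\mathbb{F}_\ell,\mathcal{N})|\le B_1$, whence $|\lambda(\StabilizerAlg(\mathbb{F}_\ell))|\ge(\ell-1)/B_1\uguale\ell$ and we take $m=0$. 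In every case the resulting constant $D$ depends only on $A/K$.

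\emph{Expected main difficulty.} The whole argument rests on uniformity in $\ell$ and $H$. Were smoothness to fail, $\StabilizerAlg/\StabilizerAlg_1$ could be an infinitesimal $\mu_{\ell^k}$ and the $\mu_n$‑versus‑$\mathbb{G}_m$ dichotomy would collapse; were the number of connected components unbounded, neither the bound $n\le B$ nor the estimate for $|H^1(\mathbb{F}_\ell,\mathcal{N})|$ would survive. Supplying these two facts uniformly in $\ell$ is precisely the role of \S\ref{sect_Stabilizers} and of Lemmas \ref{lemma_ConnectedComponents}, \ref{lemma_ConnectedComponents2} and \ref{lemma_Cohomology}; once these are in hand, the lemma follows from the short computation above.
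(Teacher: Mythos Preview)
Your proof is correct and follows essentially the same strategy as the paper: reduce to very good primes, replace $\operatorname{Stab}(H)$ by the algebraic stabilizer $\StabilizerAlg$, and run a dichotomy according to whether $\lambda$ restricted to $\Smooth$ is trivial or surjective onto $\mathbb{G}_{m,\mathbb{F}_\ell}$, invoking Lemmas~\ref{lemma_ConnectedComponents}, \ref{lemma_ConnectedComponents2} and~\ref{lemma_Cohomology} in the respective cases. The only differences are cosmetic---the paper phrases the dichotomy directly via $\lambda(\Smooth)$ rather than the quotient $\StabilizerAlg/\StabilizerAlg_1$, and in the surjective case uses the sequence $1\to\StabilizerAlg_1\to\StabilizerAlg\to\mathbb{G}_m\to1$ (bounding the cokernel by $|H^1(\mathbb{F}_\ell,\StabilizerAlg_1)|$) rather than your $1\to\mathcal{N}\to\Smooth\to\mathbb{G}_m\to1$; your appeal to Lang's theorem for $H^1(\mathbb{F}_\ell,\Smooth)$ is true but not actually needed for the cokernel bound.
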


\begin{proof}
Observe first that it suffices to prove that the conclusion of the lemma holds for all but finitely many primes: indeed, for a fixed prime $\ell$ the finite group $A[\ell]$ possesses only finitely many subgroups $H$, so we can choose $D$ so large that \eqref{eq_WeakMu5} holds for any such $H$ (with $m=0$, say). We can therefore assume that $\ell$ is very good (cf.~definition \ref{def_VeryGood}).
 Recall that $\SpecialFiber{H}$ is a subgroup of $\operatorname{GSp}_{2g,\mathbb{F}_\ell}$, so that there is a well-defined  multiplier character $\lambda: \SpecialFiber{H} \to \mathbb{G}_{m,\mathbb{F}_\ell}$.
At the level of $\mathbb{F}_\ell$-points we have $G_\ell \subseteq \underline{H}_\ell(\mathbb{F}_\ell) \subseteq \operatorname{GSp}_{2g}(\mathbb{F}_\ell)$, and -- since we assume $A$ to be principally polarized -- for all primes $\ell$ we have 
$ \lambda \circ \rho_\ell = \chi_\ell,
$
the mod-$\ell$ cyclotomic character.
Let now $e_1,\ldots,e_{2g}$ be an $\mathbb{F}_\ell$-basis of $A[\ell]$ such that $e_1,\ldots,e_r$ is an $\mathbb{F}_\ell$-basis of $H$. We consider the finite group $\Stabilizer=\left\{ M \in G_\ell \bigm\vert M \cdot h = h \quad \forall h \in H \right\}$, that is, the stabilizer of $H$ in $G_\ell$, and the algebraic group
$
\StabilizerAlg = \left\{ M \in \SpecialFiber{H} \bigm\vert M \cdot e_i = e_i, \; 1 \leq i \leq r \right\},
$
that is, the stabilizer of $H$ in $\SpecialFiber{H}$.
It is clear by definition that $\Stabilizer=G_\ell \cap \StabilizerAlg(\mathbb{F}_\ell)$; 
since $G_\ell \uguale \underline{H}_\ell(\mathbb{F}_\ell)$, this shows in particular that $T \uguale \StabilizerAlg(\mathbb{F}_\ell)$.
Notice that $\StabilizerAlg$ is smooth over $\mathbb{F}_\ell$: indeed, the group $\StabilizerAlg$ is the base-change to $\mathbb{F}_\ell$ of a corresponding group $\underline{\mathcal{G}}_I$ over $\mathbb{Z}_\ell$ (notation as in section \ref{sect_Stabilizers}), and is therefore smooth over $\mathbb{F}_\ell$ by virtue of lemma \ref{lemma_SmoothnessNew} and the fact that $\ell$ is very good. Furthermore, by lemma \ref{lemma_ConnectedComponents}, the group of components of $\StabilizerAlg$ has order bounded by a constant $B$ independent of $\ell$ and $H$. By lemma \ref{lemma_ConnectedComponents2}, the order of the group of connected components of the algebraic group
$
\StabilizerAlg_1 =\left\{ M \in H_\ell(\ell) \bigm\vert M \cdot h=h \quad \forall h \in H, \; \lambda(M)=1 \right\}=\ker (\lambda: \StabilizerAlg \to \mathbb{G}_{m,\mathbb{F}_\ell})
$
is also bounded by a constant independent of $\ell$ and $H$, which we call $B_1$, and furthermore $\StabilizerAlg_1$ is smooth since $\ell$ is very good. 
Finally, the group $\Stabilizer_1=\left\{ M \in G_\ell \bigm\vert M \cdot h=h \quad \forall h\in H, \; \lambda(M)=1 \right\}$ satisfies $\Stabilizer_1 \uguale \StabilizerAlg_1(\mathbb{F}_\ell)$.
Consider now the restriction of $\lambda : \operatorname{GSp}_{2g,\mathbb{F}_\ell} \to \mathbb{G}_{m,\mathbb{F}_\ell}$ to $\Smooth$, the identity component of $\StabilizerAlg$. As $\Smooth$ is smooth, the image $\lambda(\Smooth)$ is a connected reduced subgroup of $\mathbb{G}_{m,\mathbb{F}_\ell}$, hence it is either trivial or all of $\mathbb{G}_{m,\mathbb{F}_\ell}$. Let us consider the two cases separately.


\medskip

\noindent\textbf{$\lambda(\Smooth)$ is trivial.} As we have already remarked we have $\Stabilizer \subseteq \StabilizerAlg(\mathbb{F}_\ell)$. It follows that the order of $\lambda(\Stabilizer)$ is at most the order of $\lambda(\StabilizerAlg(\mathbb{F}_\ell))$, which in turn does not exceed $[\StabilizerAlg:\Smooth]$ since the restriction of $\lambda$ to $\Smooth$ is trivial. Hence we have $|\lambda(\Stabilizer)| \leq [\StabilizerAlg:\Smooth] \leq B$.

\smallskip

\noindent\textbf{$\lambda: \Smooth \to \mathbb{G}_{m,\mathbb{F}_\ell}$ is onto.} Consider the exact sequence
\[
1 \to \StabilizerAlg_1 \to \StabilizerAlg \xrightarrow{\lambda} \mathbb{G}_{m,\mathbb{F}_\ell} \to 1
\]
and take $\mathbb{F}_\ell$-rational points: the associated long exact sequence in cohomology shows that
$
\StabilizerAlg(\mathbb{F}_\ell) \xrightarrow{\lambda} \mathbb{G}_{m,\mathbb{F}_\ell}(\mathbb{F}_\ell)=\mathbb{F}_\ell^\times \to H^1\left(\mathbb{F}_\ell,\StabilizerAlg_1 \right)
$ is exact, so $\left|\operatorname{coker} \left(\StabilizerAlg(\mathbb{F}_\ell) \xrightarrow{\lambda} \mathbb{F}_\ell^\times\right)\right|$ is at most $\left|H^1\left(\mathbb{F}_\ell,\StabilizerAlg_1 \right) \right|$, which in turn (by lemmas \ref{lemma_Cohomology} and \ref{lemma_ConnectedComponents2}) does not exceed $B_1$.
Since $\Stabilizer \uguale \StabilizerAlg(\mathbb{F}_\ell)$, it follows that
$
|\lambda(\Stabilizer)| \uguale |\lambda(\StabilizerAlg(\mathbb{F}_\ell))| \geq \frac{\ell-1}{B_1},
$
that is, there exists a constant $B'$ (independent of $\ell$, as long as it is very good) such that whenever $\lambda:\Smooth \to \mathbb{G}_{m,\mathbb{F}_\ell}$ is onto the inequality $\displaystyle |\lambda(\Stabilizer)| \geq \frac{\ell-1}{B'}$ holds.

Let now $B''$ be a constant large enough that inequality \eqref{eq_WeakMu5} in the statement of the lemma holds, with $D=B''$, for all the (finitely many) primes $\ell$ that are not very good, and for the (finitely many) subgroups $H$ of $A[\ell]$, for each of these primes. Finally set $D=\max\left\{B,B',B'' \right\}$. We now show that inequality \eqref{eq_WeakMu5} is satisfied for all primes $\ell$ and all subgroups $H$ of $A[\ell]$. It is clear by construction that this is true for the primes that are not very good, so we can suppose that $\ell$ is unramified in $K$ and that $\StabilizerAlg$ and $\StabilizerAlg_1$ are smooth over $\mathbb{F}_\ell$. 
Observe that the group $\Stabilizer$ we considered above is by definition the Galois group of $K(A[\ell])/K(H)$, whereas the Galois group of $K(A[\ell])$ over $K(\mu_\ell)$ is $N:=\ker \left( G_\ell \stackrel{\lambda}{\longrightarrow} \mathbb{F}_\ell^\times \right)$. It follows that the Galois group of $K(A[\ell])$ over $K(H) \cap K(\mu_\ell)$ is the group generated by $\Stabilizer$ and $N$, hence the degree of $K(H) \cap K(\mu_\ell)$ over $K$ is the index of $N\Stabilizer$ in $G_\ell$. On the other hand we have $|G_\ell/N\Stabilizer| = \frac{\left|G_\ell/N\right|}{\left|N\Stabilizer/N\right|}$ (recall that $N$ is normal in $G_\ell$ by construction), and $G_\ell/N$ is isomorphic to the image of $\lambda : G_\ell \to \mathbb{F}_\ell^\times$. As $\ell$ is unramified in $K$, the mod-$\ell$ cyclotomic character $\chi_\ell : \abGal{K} \to \mathbb{F}_\ell^\times$ is surjective, hence we have $\lambda(G_\ell)=\chi_\ell(\abGal{K})=\mathbb{F}_\ell^\times$ and therefore
\[
[K(H) \cap K(\mu_\ell):K] = |G_\ell/N\Stabilizer| = \displaystyle \frac{|\lambda(G_\ell)|}{|\lambda(N\Stabilizer)|} = \frac{\ell-1}{|\lambda(\Stabilizer)|}.
\]
By our previous arguments we now see that
\begin{itemize}
\item either $\lambda(\Smooth)$ is trivial, in which case $1 \leq |\lambda(\Stabilizer)| \leq B$ and \eqref{eq_WeakMu5} is satisfied by taking $m=1$;
\item or $\lambda : \Smooth \to \mathbb{G}_{m,\mathbb{F}_\ell}$ is onto, in which case we have $\displaystyle\frac{\ell-1}{B'} \leq |\lambda(\Stabilizer)| \leq \ell-1$ and \eqref{eq_WeakMu5} is satisfied by taking $m=0$.
\end{itemize}
\end{proof}

\begin{remark} It is clear from the definitions that (if $\ell$ is large enough) the integer $m$ of the previous lemma satisfies $m \geq m_1(H[\ell])$. For the group $\mathcal{H}$ considered below in the proof of theorem \ref{thm_ms} we have $m_1(\mathcal{H})=0$ and $m=1$, which shows that equality needs not hold.
\end{remark}

To complete the proof of theorem \ref{thm_mw} we need two more lemmas.

\begin{lemma}\label{lemma_k0}
Let $K$ be a number field and $A/K$ be an abelian variety. For any finite subgroup $H$ of $A[\ell^\infty]$ the degree $[K(H):K(H[\ell])]$ is a power of $\ell$ (up to a bounded constant).
\end{lemma}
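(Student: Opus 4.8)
The plan is to recast $[K(H):K(H[\ell])]$ as an index of Galois groups, split off a ``higher level'' contribution which is automatically a power of $\ell$, and bound what remains by means of the smoothness results of Section~\ref{sect_Stabilizers}. First I would let $n$ be the exponent of $H$, so that $K(H[\ell]) \subseteq K(H) \subseteq K(A[\ell^n])$, and write $T_n := \operatorname{Gal}(K(A[\ell^n])/K(H))$ and $T_1' := \operatorname{Gal}(K(A[\ell^n])/K(H[\ell]))$ for the stabilizers in $G_{\ell^n}$ of $H$ and of $H[\ell]$ respectively, so that $[K(H):K(H[\ell])] = [T_1':T_n]$. Letting $\pi : G_{\ell^n} \to G_\ell$ be the reduction map, $\Gamma := \ker\pi$, and $T_1 := \operatorname{Stab}_{G_\ell}(H[\ell])$, one has $T_1' = \pi^{-1}(T_1)$ (fixing $H[\ell] \subseteq A[\ell]$ depends only on the action modulo $\ell$), hence $\Gamma \subseteq T_1'$ and $\pi(T_1') = T_1$; since $\Gamma$ is normal in $G_{\ell^n}$, the elementary identity $[T_1':T_n] = [T_1':T_n\Gamma]\cdot[T_n\Gamma:T_n]$, together with $[T_1':T_n\Gamma] = [\pi(T_1'):\pi(T_n)] = [T_1:\pi(T_n)]$ and $[T_n\Gamma:T_n] = [\Gamma:\Gamma\cap T_n]$, gives
\[
[K(H):K(H[\ell])] = [T_1:\pi(T_n)]\cdot[\Gamma:\Gamma\cap T_n].
\]
The second factor divides $|\Gamma|$, hence is a power of $\ell$, since $\Gamma$ embeds into the kernel of $\operatorname{GL}_{2g}(\mathbb{Z}/\ell^n\mathbb{Z}) \to \operatorname{GL}_{2g}(\mathbb{F}_\ell)$, which is an $\ell$-group. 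So the whole task reduces to bounding $[T_1:\pi(T_n)]$ by a constant independent of $\ell$ and $H$; this is unaffected by replacing $K$ by a finite extension, so I would freely assume the reductions of the previous subsection.

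Next I would dispose of the finitely many primes that are not very good (Definition~\ref{def_VeryGood}): for each such $\ell$, the group $T_1$ sits inside the fixed finite group $G_\ell$, so $[T_1:\pi(T_n)] \le |G_\ell|$ there. So assume $\ell$ is very good. Choose a $\mathbb{Z}_\ell$-basis $\widehat e_1,\ldots,\widehat e_{2g}$ of $T_\ell A$ adapted to $H$ as in Section~\ref{sect_Stabilizers} and put $I := \{i : m_i \ge 1\}$. A short computation identifies the stabilizer of $H$ in $\operatorname{Aut}(A[\ell^n])$ with $\{M : M\widehat e_i \equiv \widehat e_i \pmod{\ell^{m_i}}\ \forall i \in I\}$, and shows that $H[\ell]$ is the span of the images of the $\widehat e_i$ ($i\in I$) in $A[\ell] = T_\ell A/\ell T_\ell A$; consequently the stabilizer of $H[\ell]$ in $\operatorname{Aut}(A[\ell])$ equals $\underline{\mathcal{G}}_I(\mathbb{F}_\ell)$, where $\underline{\mathcal{G}}_I$ is the $\mathbb{Z}_\ell$-group scheme of Section~\ref{sect_Stabilizers}, which is smooth over $\mathbb{Z}_\ell$ by Lemma~\ref{lemma_SmoothnessNew} because $\ell$ is very good.

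The heart of the argument is the following. Set $\mathcal{T}_n := \operatorname{Stab}_{\underline{H}_\ell(\mathbb{Z}/\ell^n\mathbb{Z})}(H)$ and $\mathcal{T}_1 := \operatorname{Stab}_{\underline{H}_\ell(\mathbb{F}_\ell)}(H[\ell]) = \underline{\mathcal{G}}_I(\mathbb{F}_\ell)$; then $\pi(\mathcal{T}_n) = \mathcal{T}_1$. The inclusion $\pi(\mathcal{T}_n) \subseteq \mathcal{T}_1$ is immediate, since reducing modulo $\ell$ a matrix that fixes each $\widehat e_i$ modulo $\ell^{m_i}$ (with $m_i\ge 1$) fixes each $\widehat e_i \bmod \ell$; for the reverse inclusion one notes that $\underline{\mathcal{G}}_I(\mathbb{Z}/\ell^n\mathbb{Z}) \subseteq \mathcal{T}_n$ (fixing $\widehat e_i$ modulo $\ell^n$ is stronger than modulo $\ell^{m_i}$) and that smoothness of $\underline{\mathcal{G}}_I$ over $\mathbb{Z}_\ell$ makes the reduction $\underline{\mathcal{G}}_I(\mathbb{Z}/\ell^n\mathbb{Z}) \to \underline{\mathcal{G}}_I(\mathbb{F}_\ell) = \mathcal{T}_1$ surjective, whence $\mathcal{T}_1 = \pi(\underline{\mathcal{G}}_I(\mathbb{Z}/\ell^n\mathbb{Z})) \subseteq \pi(\mathcal{T}_n)$. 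Then I would observe $T_n = G_{\ell^n} \cap \mathcal{T}_n$ and invoke Lemma~\ref{lemma_Index} together with Theorem~\ref{thm_UnconditionalMT}, which bound $[\underline{H}_\ell(\mathbb{Z}/\ell^n\mathbb{Z}) : G_{\ell^n}]$ by a constant $D$ uniform in $\ell$ and $n$; this yields $[\mathcal{T}_n : T_n] \le D$, and applying $\pi$ (Lemma~\ref{lemma_Index} once more) shows that $[\mathcal{T}_1 : \pi(T_n)] = [\pi(\mathcal{T}_n):\pi(T_n)]$ divides $[\mathcal{T}_n:T_n] \le D$. Since $\pi(T_n) \subseteq T_1 \subseteq \mathcal{T}_1$, this forces $[T_1:\pi(T_n)] \le D$, which combined with the first two paragraphs finishes the proof, the final constant being the maximum of $D$ and of the finitely many numbers $|G_\ell|$ attached to primes $\ell$ that are not very good.

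The step I expect to be the main obstacle — or at least the one needing the most care — is the identity $\pi(\mathcal{T}_n) = \mathcal{T}_1$: this is the only place where the ``mixed precision'' in the stabilizer of $H$ (each $\widehat e_i$ fixed only modulo $\ell^{m_i}$, with possibly distinct $m_i$) enters, and the device is precisely to trap $\mathcal{T}_n$ between the honest $\mathbb{Z}/\ell^n\mathbb{Z}$-points of the smooth scheme $\underline{\mathcal{G}}_I$ and the preimage $\pi^{-1}(\mathcal{T}_1)$, so that the surjectivity of reduction for smooth group schemes (Lemma~\ref{lemma_SmoothnessNew} via Lemma~\ref{lemma_SmoothOverFlImpliesSmoothOverZl}) carries the argument. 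The remaining ingredients — the comparison of $\underline{H}_\ell(\mathbb{Z}/\ell^n\mathbb{Z})$ with $G_{\ell^n}$ through Lemma~\ref{lemma_Index} and Theorem~\ref{thm_UnconditionalMT}, and the linear-algebra identification of $H[\ell]$ inside $A[\ell]$ — are routine but must be set up carefully.
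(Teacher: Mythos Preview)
Your argument is correct and takes a genuinely different route from the paper's proof. The paper sets up a full filtration $\mathcal{G}_1 \subset \cdots \subset \mathcal{G}_t \subset \underline{H}_\ell$ of smooth $\mathbb{Z}_\ell$-groups indexed by the distinct values among the exponents $m_i$, introduces the auxiliary finite groups $\mathcal{G}(m;n_1,\ldots,n_t)$, and proves by induction on the level $m$ that the reduction maps $\mathcal{G}(m;n_1,\ldots,n_t) \to \mathcal{G}(m-1;n_1,\ldots,n_t)$ are surjective with $\ell$-power kernel; this yields that $[K(H[\ell^m]):K]/[K(H[\ell]):K]$ is a power of $\ell$ up to bounded constants. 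Your approach bypasses this machinery entirely: you decompose $[T_1':T_n]$ into an $\ell$-power factor $[\Gamma:\Gamma\cap T_n]$ and a residual factor $[T_1:\pi(T_n)]$, and control the latter using only the \emph{single} smooth group $\underline{\mathcal{G}}_I$ with $I=\{i:m_i\ge 1\}$, via the sandwich $\underline{\mathcal{G}}_I(\mathbb{Z}/\ell^n\mathbb{Z}) \subseteq \mathcal{T}_n \subseteq \pi^{-1}(\mathcal{T}_1)$. This is more economical: the paper needs smoothness of \emph{all} the $\underline{\mathcal{G}}_{I_r}$ simultaneously, whereas you only need it for the maximal index set. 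The paper's filtration approach does give finer information (it controls $[K(H[\ell^m]):K]$ level by level, in the spirit of \cite{MR2862374}), but for the bare statement of the lemma your argument is both shorter and conceptually cleaner. Your identification of $H[\ell]$ with the span of the $\widehat{e}_i \bmod \ell$ for $i\in I$, and of the stabilizer of $H$ with the ``mixed-precision'' condition $M\widehat{e}_i\equiv\widehat{e}_i\pmod{\ell^{m_i}}$, are correct and match the conventions of Section~\ref{sect_Stabilizers}.
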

\begin{proof}
We use the notation from section \ref{sect_Stabilizers}; in particular we write $H \cong \prod_{i=1}^{2g} \mathbb{Z}/\ell^{m_i}\mathbb{Z}$, and fix generators $e_1,\ldots,e_{2g}$ of $H$ and a basis 
$\widehat{e}_1, \ldots, \widehat{e}_{2g}$ of $T_\ell A$ lifting the $e_i$.
We suppose first that $\ell$ is a very good prime. 
Inspired by the approach of \cite{MR2862374}, given $\mathbb{Z}_\ell$-algebraic subgroups $\mathcal{G}_1 \subseteq \mathcal{G}_2 \subseteq \cdots \subseteq \mathcal{G}_t$ of a $\mathbb{Z}_\ell$-group $\mathcal{G}$, a strictly increasing sequence $n_1 < n_2 < \cdots < n_t$ of positive integers, and a positive integer $n$, we now denote by $\mathcal{G}(n; n_1, \ldots, n_t)$ the finite group
\[
\left\{ M \in \mathcal{G}(\mathbb{Z}/\ell^n\mathbb{Z}) \bigm\vert M \in \mathcal{G}_i \text{ mod } \ell^{\min(n,n_i)}, \quad i=1,\ldots,t \right\}.
\]
It is natural to also consider case of $t$ being $0$: if $n_i$ is the empty sequence, we simply define $\mathcal{G}(n)=\mathcal{G}(\mathbb{Z}/\ell^n\mathbb{Z})$.
To $H$ we now attach a strictly decreasing sequence of positive integers $m^{(1)}>m^{(2)}>\cdots>m^{(t)} \geq 1$ (where $t \leq 2g$) by setting
\[
m^{(1)} = \max \left\{m_i \bigm\vert m_i \neq 0 \right\} \text{ and recursively } m^{(r+1)}=\max \left\{m_i  \bigm\vert 0 < m_i < m^{(r)} \right\},
\]
and, for $1 \leq r \leq t$, we let $I_r=\left\{ i \in \left\{1,\ldots,2g \right\} \bigm\vert m_i \geq m^{(r)} \right\}$. Finally, for $1 \leq r \leq t$, we set
\[
\mathcal{G}_r:=\underline{\mathcal{G}}_{I_{t+1-r}} = \left\{ M \in \underline{H}_\ell \bigm\vert M \cdot \widehat{e_i} = \widehat{e_i} \text{ for } i \in I_{t+1-r}\right\},
\]
and we consider the strictly \textit{increasing} sequence $n_r=m^{(t+1-r)}$ (for $1 \leq r \leq t$). 

By our assumptions on $\ell$ all the groups $\mathcal{G}_r$ are smooth over $\mathbb{Z}_\ell$, and, as in \cite{MR2862374}, we easily see that the $\mathcal{G}_r$ so defined form an increasing sequence of subgroups of $\mathcal{G}:=\underline{H}_\ell$ such that $\left[ K(H[\ell^m]): K \right] \uguale \left[\mathcal{G}(\mathbb{Z}/\ell^m\mathbb{Z}) :\mathcal{G}(m;n_1, \ldots, n_t)\right]$. We now show that (for any $H$ and any $m \geq 1$) the number
\begin{equation}\label{eq_IndexEll}
\frac{\left[\mathcal{G}(\mathbb{Z}/\ell^m\mathbb{Z}) :\mathcal{G}(m;n_1, \ldots, n_t)\right]}{\left[\mathcal{G}(\mathbb{Z}/\ell\mathbb{Z}) :\mathcal{G}(1;n_1, \ldots, n_t)\right]}
\end{equation}
is a power of $\ell$.
To prove this fact, we preliminarily show that for all $m \geq 2$ the reduction map $\mathcal{G}\left(\mathbb{Z}/\ell^m \mathbb{Z} \right) \xrightarrow{\pi_{m-1}} \mathcal{G}\left(\mathbb{Z}/\ell^{m-1} \mathbb{Z} \right)$ maps $\mathcal{G}(m;n_1,\ldots,n_t)$ surjectively onto $\mathcal{G}(m-1;n_1,\ldots,n_t)$. We can proceed by induction on $t$, showing the stronger statement that this is true for any chain of groups $\mathcal{G}_1\subset \mathcal{G}_2 \subset \cdots \subset \mathcal{G}_t \subset\mathcal{G}$ where each term is smooth over $\mathbb{Z}_\ell$. 
Indeed,
\begin{itemize}
\item for $t=0$ the claim follows from the smoothness of $\mathcal{G}$ and Hensel's lemma;
\item if $m \leq n_t$, then we have
$
\mathcal{G}(j;n_1,\ldots,n_t)=\mathcal{G}_t(j;n_1,\ldots,n_{t-1})
$
both for $j=m$ and $j=m-1$, so the claim follows from the induction hypothesis;
\item if $m>n_t$, then $\mathcal{G}\left(\mathbb{Z}/\ell^m \mathbb{Z} \right) \to \mathcal{G}\left(\mathbb{Z}/\ell^{m-1} \mathbb{Z} \right)$ is surjective by smoothness of $\mathcal{G}$, and furthermore, since by assumption we have $m-1 \geq n_t > n_{t-1} > \ldots > n_1$, any lift to $\mathcal{G}\left(\mathbb{Z}/\ell^m \mathbb{Z} \right)$ of a point in $\mathcal{G}(m-1;n_1,\ldots,n_t)$ belongs to $\mathcal{G}(m;n_1,\ldots,n_t)$, so that the induced map $\mathcal{G}(m;n_1,\ldots,n_t) \to \mathcal{G}(m-1;n_1,\ldots,n_t)$ is indeed surjective.
\end{itemize}
We now prove our claim that \eqref{eq_IndexEll} is a power of $\ell$ by induction on $m$, the case $m=1$ being trivial. Notice that, by Hensel's lemma and since $m \geq 2$, the kernel of $\pi_{m-1}$ is an $\ell$-group (of order $\ell^{\dim \mathcal{G}}$). It follows that $\pi_{m-1}$ induces a surjective map $\mathcal{G}(m;n_1,\ldots,n_t) \to \mathcal{G}(m-1;n_1,\ldots,n_t)$ whose kernel is an $\ell$-group; in particular, the numbers $\frac{\left|\mathcal{G}(m;n_1,\ldots,n_t)\right|}{\left|\mathcal{G}(m-1;n_1,\ldots,n_t)\right|}$ and $\frac{|\mathcal{G}(\mathbb{Z}/\ell^{m}\mathbb{Z})|}{|\mathcal{G}(\mathbb{Z}/\ell^{m-1}\mathbb{Z})|}$ are both powers of $\ell$, and an immediate induction shows that the same is true for \eqref{eq_IndexEll}. 

Choosing $m$ large enough that $H=H[\ell^m]$, it follows from our previous considerations that $\displaystyle [K(H):K(H[\ell])]=\frac{\left[ K(H[\ell^m]):K \right]}{[K(H[\ell]):K]} \uguale \frac{\left[\mathcal{G}(\mathbb{Z}/\ell^m\mathbb{Z}) :\mathcal{G}(m;n_1, \ldots, n_t)\right]}{\left[\mathcal{G}(\mathbb{Z}/\ell\mathbb{Z}) :\mathcal{G}(1;n_1, \ldots, n_t)\right]}$ is a power of $\ell$ (up to bounded constants),
which finishes the proof of the lemma when all the stabilizers $\underline{\mathcal{G}}_I$ are smooth over $\mathbb{Z}_\ell$, and leaves us with only finitely many (not \textit{very good}) primes to consider. To establish the lemma we thus need to show that, for $\ell$ ranging over these finitely many primes and $H$ ranging over the finite subgroups of $A[\ell^\infty]$, the degree $
\left[ K(H) : K(H[\ell]) \right]
$ is within a constant factor of a power of $\ell$. As we are only considering finitely many primes, there are only finitely many subgroups of $A[\ell]$, and therefore we have $\left[K(H[\ell]):K\right] \uguale 1$; hence we just need to show that $[K(H):K]$ is a power of $\ell$ up to a constant factor. Let $\ell^m$ be the exponent of $H$. Since the prime-to-$\ell$ part of $[K(H):K]$ divides the prime-to-$\ell$ part of $[K(A[\ell^m]):K]$, it is enough to show that $|G_{\ell^m}|=|\operatorname{Gal}\left(K(A[\ell^m])/K \right)|$ is a power of $\ell$ up to a bounded constant.
Let $C$ be the least common multiple of the orders of the groups $G_\ell$ for $\ell$ ranging over the finitely many not \textit{very good} primes. Consider the reduction map $\pi: G_{\ell^m} \to G_\ell$, and notice that its kernel is a subgroup of $\ker\left(\operatorname{GL}_{2g}(\mathbb{Z}/\ell^m\mathbb{Z}) \to \operatorname{GL}_{2g}(\mathbb{F}_\ell) \right)$, hence in particular an $\ell$-group; we can then write
$
\frac{\left| G_{\ell^m} \right|}{|\ker \pi|}$ as $\left| \pi\left( G_{\ell^m}  \right)\right|,
$
which by construction is an integer dividing $C$. Since $\left| \ker \pi \right|$ is a power of $\ell$, we see that the prime-to-$\ell$ part of $|G_{\ell^m}|$ is bounded by $C$; this completes the proof in the non-smooth case as well.
\end{proof}

\begin{lemma}\label{lemma_KHOverKJ} Let $K$ be a number field, $A/K$ be an abelian variety, $\ell$ a prime number, and $H$ a finite subgroup of $A[\ell^\infty]$.
We have \[K(H) \cap K(\mu_\ell) \uguale K(H[\ell]) \cap K(\mu_\ell),\] and the degree of $
K(H) \cap K(\mu_{\ell^\infty})$ over $K(H) \cap K(\mu_\ell)
$
is a power of $\ell$.
\end{lemma}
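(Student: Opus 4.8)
The plan is to prove the two assertions separately; the second is a purely cyclotomic statement, and the first uses the smoothness results of Section~\ref{sect_Stabilizers}.

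\emph{The degree of $K(H)\cap K(\mu_{\ell^\infty})$ over $K(H)\cap K(\mu_\ell)$.} Set $M=K(H)\cap K(\mu_{\ell^\infty})$, a number field with $K\subseteq M\subseteq K(\mu_{\ell^\infty})$, and note that $K(H)\cap K(\mu_\ell)=M\cap K(\mu_\ell)$. Since $\operatorname{Gal}(K(\mu_{\ell^\infty})/K)$ is abelian, writing $G_M=\operatorname{Gal}(K(\mu_{\ell^\infty})/M)$ and $N=\operatorname{Gal}(K(\mu_{\ell^\infty})/K(\mu_\ell))$ the field $M\cap K(\mu_\ell)$ corresponds to the subgroup $G_M N$, so that
\[
[M:M\cap K(\mu_\ell)]=[G_M N:G_M]=[N:N\cap G_M].
\]
Restriction embeds $N$ into $\operatorname{Gal}(\mathbb{Q}(\mu_{\ell^\infty})/\mathbb{Q}(\mu_\ell))$, which is pro-$\ell$ (it is $1+\ell\mathbb{Z}_\ell$ when $\ell$ is odd and $\mathbb{Z}_2^\times$ when $\ell=2$); hence the finite-index subgroup $N\cap G_M$ has $\ell$-power index in $N$, which gives the claim.

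\emph{The relation $K(H)\cap K(\mu_\ell)\uguale K(H[\ell])\cap K(\mu_\ell)$.} I would first reduce to $\ell$ very good (Definition~\ref{def_VeryGood}): for each of the remaining finitely many primes, both fields lie inside $K(\mu_\ell)$, of degree dividing $\ell-1$ over $K$, so a constant works. Since $\mu_\ell\subseteq K(A[\ell])$ (as $A$ is principally polarized over $K$), we have $K(H)\cap K(\mu_\ell)=\bigl(K(H)\cap K(A[\ell])\bigr)\cap K(\mu_\ell)$, and since $K(H[\ell])\subseteq K(H)\cap K(A[\ell])$ a short computation with subgroups of $G_\ell$ (of the type of Lemma~\ref{lemma_Index}) bounds $[K(H)\cap K(\mu_\ell):K(H[\ell])\cap K(\mu_\ell)]$ by $[K(H)\cap K(A[\ell]):K(H[\ell])]$. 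It therefore suffices to bound this last index by a constant depending only on $A/K$, uniformly in $\ell$ and $H$. To do so, fix generators $e_1,\dots,e_{2g}$ of the cyclic factors of $H$ (with $e_i$ of order $\ell^{m_i}$), a $\mathbb{Z}_\ell$-basis $\widehat e_1,\dots,\widehat e_{2g}$ of $T_\ell A$ lifting them, and put $I=\{i:m_i\geq 1\}$. Using $T_\ell A/\ell T_\ell A\cong A[\ell]$ one checks that $\ell^{m_i-1}e_i$ is the class of $\widehat e_i$ modulo $\ell$, so $H[\ell]$ is the subgroup of $A[\ell]$ spanned by the reductions of the $\widehat e_i$, $i\in I$; hence $\operatorname{Gal}(K(A[\ell])/K(H[\ell]))=G_\ell\cap\underline{\mathcal{G}}_I(\mathbb{F}_\ell)\uguale\underline{\mathcal{G}}_I(\mathbb{F}_\ell)$. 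On the other hand, an element of $G_{\ell^\infty}$ fixing each $\widehat e_i$ ($i\in I$) fixes each $e_i$, hence fixes $H$; thus $\underline{\mathcal{G}}_I(\mathbb{Z}_\ell)\cap G_{\ell^\infty}$ is contained in the stabilizer of $H$ in $G_{\ell^\infty}$, and its index in $\underline{\mathcal{G}}_I(\mathbb{Z}_\ell)$ is bounded independently of $\ell$ and $H$ (Lemma~\ref{lemma_Index} and the uniform bound on $[\underline H_\ell(\mathbb{Z}_\ell):G_{\ell^\infty}]$ from Theorem~\ref{thm_UnconditionalMT}). Since $\ell$ is very good, $\underline{\mathcal{G}}_I$ is smooth over $\mathbb{Z}_\ell$ (Lemma~\ref{lemma_SmoothnessNew}), so by Hensel's lemma $\underline{\mathcal{G}}_I(\mathbb{Z}_\ell)$ surjects onto $\underline{\mathcal{G}}_I(\mathbb{F}_\ell)$. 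Putting these together, the image in $G_\ell$ of the stabilizer of $H$ in $G_{\ell^\infty}$ — which is $\operatorname{Gal}(K(A[\ell])/(K(H)\cap K(A[\ell])))$ — contains a subgroup of index bounded (independently of $\ell$, $H$) in $\underline{\mathcal{G}}_I(\mathbb{F}_\ell)$, while being contained in $\operatorname{Gal}(K(A[\ell])/K(H[\ell]))\uguale\underline{\mathcal{G}}_I(\mathbb{F}_\ell)$; hence $[K(H)\cap K(A[\ell]):K(H[\ell])]$ is bounded, as required.

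The part I expect to require the most care is the bookkeeping in the last step: checking that $\ell^{m_i-1}e_i$ is indeed the reduction of $\widehat e_i$ under $T_\ell A/\ell T_\ell A\cong A[\ell]$, and — more importantly — making sure every implied constant (``$\uguale$'', ``bounded index'') is genuinely independent of both $\ell$ and $H$, which is exactly where the uniform smoothness of Lemma~\ref{lemma_SmoothnessNew} and the uniform index bounds of Theorem~\ref{thm_UnconditionalMT} do the real work; the rest is formal.
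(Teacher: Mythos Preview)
Your proof is correct, but the argument for the first assertion follows a genuinely different route from the paper's.

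The paper proves $K(H)\cap K(\mu_\ell)\uguale K(H[\ell])\cap K(\mu_\ell)$ by invoking Lemma~\ref{lemma_k0}, which says that $[K(H):K(H[\ell])]$ is a power of $\ell$ up to a bounded constant; working inside $G_{\ell^m}$ with $U_1=\operatorname{Gal}(K(A[\ell^m])/K(H[\ell]))$, $U_m=\operatorname{Gal}(K(A[\ell^m])/K(H))$ and $N=\ker(\lambda:G_{\ell^m}\to\mathbb{F}_\ell^\times)$, one then observes that $[NU_1:NU_m]=|\lambda(U_1)|/|\lambda(U_m)|$ is simultaneously (essentially) a power of $\ell$ and a divisor of $|\mathbb{F}_\ell^\times|$, hence $\uguale 1$. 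Your approach instead bypasses Lemma~\ref{lemma_k0} entirely: you bound $[K(H)\cap K(A[\ell]):K(H[\ell])]$ directly by exhibiting, via the surjection $\underline{\mathcal G}_I(\mathbb{Z}_\ell)\twoheadrightarrow\underline{\mathcal G}_I(\mathbb{F}_\ell)$ coming from smoothness, a subgroup of bounded index in $\underline{\mathcal G}_I(\mathbb{F}_\ell)$ inside the image of $\operatorname{Stab}_H(G_{\ell^\infty})$. This is more economical---indeed, since Lemma~\ref{lemma_k0} is used nowhere else in the paper, your argument would make it dispensable---though both routes ultimately rest on the same key input, the uniform smoothness of the $\underline{\mathcal G}_I$ (Lemma~\ref{lemma_SmoothnessNew}). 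The paper's route, on the other hand, isolates the stronger and independently interesting statement that $[K(H):K(H[\ell])]$ is essentially an $\ell$-power.

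For the second assertion the two arguments are the same in substance: the paper's ``immediate by Galois theory'' is exactly your observation that $\operatorname{Gal}(K(\mu_{\ell^\infty})/K(\mu_\ell))$ is pro-$\ell$.
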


\begin{proof}
Let $m$ be such that $H \subseteq A[\ell^m]$. The Galois group of $K(A[\ell^m])$ over $K(H[\ell]) \cap K(\mu_\ell)$ is generated by $\OtherStabilizer_1:=\operatorname{Gal}\left(K(A[\ell^m]/K(H[\ell]) \right)$
and $N:=\operatorname{Gal}\left(K(A[\ell^m]/K(\mu_\ell) \right)$; 
notice that 
$N=\ker \left( G_{\ell^m} \stackrel{\lambda}{\longrightarrow} \mathbb{F}_\ell^\times \right)$. Let now $\OtherStabilizer_m$ be the Galois group of $K(A[\ell^m])$ over $K(H)$. By lemma \ref{lemma_k0} we see that $[\OtherStabilizer_1:\OtherStabilizer_m]$ is a power of $\ell$ (up to a constant bounded independently of $\ell$), hence
$\displaystyle
\left[ N\OtherStabilizer_1 : N\OtherStabilizer_m  \right] = \frac{|N\OtherStabilizer_{1}/N|}{|N\OtherStabilizer_{m}/N|} = \frac{\left|\lambda(\OtherStabilizer_{1})\right|}{\left|\lambda(\OtherStabilizer_m)\right|}
$ 
is again a power of $\ell$ (up to a constant independent of $\ell$). On the other hand, $\lambda(\OtherStabilizer_{1})$ is a subgroup of $\mathbb{F}_\ell^\times$, hence of order prime to $\ell$: it follows that $\left|\frac{\lambda(\OtherStabilizer_{1})}{\lambda(\OtherStabilizer_m)}\right|\uguale 1$, and therefore $N\OtherStabilizer_{1} \uguale N\OtherStabilizer_m$. Now $N\OtherStabilizer_{1}$ is the Galois group of $K(A[\ell^m])$ over $K(H[\ell]) \cap K(\mu_\ell)$, while $N\OtherStabilizer_m$ is the Galois group of $K(A[\ell^m])$ over $K(H) \cap K(\mu_\ell)$: by Galois theory, this implies $K(H) \cap K(\mu_\ell) \uguale K(H[\ell]) \cap K(\mu_\ell)$ as claimed. The second part is immediate by Galois theory.
\end{proof}


\begin{theorem}{(Theorem \ref{thm_mw})}
Let $K$ be a number field and $A/K$ be an abelian variety. Property $(\mu)_w$ holds for $A$.
\end{theorem}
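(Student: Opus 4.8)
The plan is to deduce Theorem \ref{thm_mw} by assembling the three lemmas \ref{lemma_Core}, \ref{lemma_k0} and \ref{lemma_KHOverKJ}, together with one elementary observation about degrees of cyclotomic fields. The observation is that, $K$ being fixed, one has $[K(\mu_{\ell^n}):K] \uguale \ell^n$ uniformly in the prime $\ell$ and the integer $n \geq 0$, with implied constant depending only on $[K:\mathbb{Q}]$: indeed $[K(\mu_{\ell^n}):K] = \varphi(\ell^n)/[K \cap \mathbb{Q}(\mu_{\ell^n}):\mathbb{Q}] = \ell^{n-1}(\ell-1)/d_n$ with $1 \leq d_n \leq [K:\mathbb{Q}]$, and $\ell^{n-1}(\ell-1) \uguale \ell^n$ because $\tfrac{1}{2} \leq 1 - \tfrac{1}{\ell} < 1$. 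Hence it will suffice, for every finite subgroup $H$ of $A[\ell^\infty]$, to produce an integer $n$ (depending on $\ell$ and $H$) for which $[K(H)\cap K(\mu_{\ell^\infty}):K] \uguale \ell^{n}$, with implied constant depending only on $A/K$.

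First I would use lemma \ref{lemma_KHOverKJ} to reduce from $H$ to its $\ell$-torsion $H[\ell]$. That lemma yields $K(H)\cap K(\mu_\ell) \uguale K(H[\ell]) \cap K(\mu_\ell)$ and states that $[K(H)\cap K(\mu_{\ell^\infty}) : K(H)\cap K(\mu_\ell)]$ is a power of $\ell$, say $\ell^{j}$. Since $K(H)\cap K(\mu_\ell) \subseteq K(H)\cap K(\mu_{\ell^\infty})$, multiplying these two degrees and using that a $\uguale$ between subfields of a common field forces their degrees over $K$ to agree up to the same constant, I obtain
\[
[K(H)\cap K(\mu_{\ell^\infty}):K] \;=\; \ell^{j}\,[K(H)\cap K(\mu_\ell):K] \;\uguale\; \ell^{j}\,[K(H[\ell])\cap K(\mu_\ell):K].
\]
Note that lemma \ref{lemma_k0} is already absorbed into lemma \ref{lemma_KHOverKJ}, so it need not be invoked again at this stage.

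Then I would apply lemma \ref{lemma_Core} to the subgroup $H[\ell]$ of $A[\ell]$: it provides $m \in \{0,1\}$ with $[K(H[\ell])\cap K(\mu_\ell):K] \uguale [K(\mu_{\ell^m}):K] \uguale \ell^m$ (the last step by the cyclotomic estimate, trivially if $m=0$). Chaining the $\uguale$'s gives $[K(H)\cap K(\mu_{\ell^\infty}):K] \uguale \ell^{j+m} \uguale [K(\mu_{\ell^{j+m}}):K]$, so $n := j+m$ does the job and all implied constants depend only on $A/K$ and $[K:\mathbb{Q}]$. This establishes property $(\mu)_w$.

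There is in fact no serious obstacle left at this point: the whole weight of the argument has been carried by the preparatory lemmas, and above all by lemma \ref{lemma_Core}, whose proof in turn rested on the smoothness statements of section \ref{sect_Stabilizers}, on the bound for the number of connected components (lemma \ref{lemma_ConnectedComponents}), and on the cohomological vanishing coming from Lang's theorem (lemma \ref{lemma_Cohomology}). Relative to those inputs, the proof of the theorem is pure bookkeeping with the relation $\uguale$; the only point worth a sentence of care is the uniformity in $\ell$ of the estimate $[K(\mu_{\ell^n}):K]\uguale \ell^n$, which reduces to the inequality $1-1/\ell \geq 1/2$.
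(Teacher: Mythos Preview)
Your argument is correct and follows essentially the same route as the paper: you invoke lemma \ref{lemma_KHOverKJ} (which already absorbs lemma \ref{lemma_k0}) to reduce to $H[\ell]$ and extract the $\ell$-power factor $\ell^j$, then apply lemma \ref{lemma_Core} to $H[\ell]$ to get $m\in\{0,1\}$, and set $n=j+m$. The only cosmetic difference is that you front-load the uniform estimate $[K(\mu_{\ell^n}):K]\uguale \ell^n$, whereas the paper invokes the equivalent ``obvious equalities'' $[K(\mu_{\ell^{j+1}}):K(\mu_\ell)]\uguale [K(\mu_{\ell^j}):K]\uguale \ell^j$ at the end.
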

\begin{proof}
Fix a prime $\ell$ and a finite subgroup $H \subseteq A[\ell^\infty]$: we want to show that we can choose $n$ so as to satisfy inequality \eqref{eq_WeakMu1} (for some constant $C$ only depending on $A/K$). Let $L$ be the intersection $K(H[\ell]) \cap K(\mu_{\ell})$. By lemma \ref{lemma_Core}, we can choose $m \in \left\{0,1\right\}$ so that 
\begin{equation}\label{eq_ChoiceOfm}
\left[ L : K \right] \uguale \left[ K(\mu_{\ell^m}) : K \right],
\end{equation}
and by lemma \ref{lemma_KHOverKJ} we see that there is an integer $j$ such that 
$[K(H) \cap K(\mu_{\ell^\infty}):L] \uguale \ell^j.$
Observe now that
$
\left[ K(H) \cap K(\mu_{\ell^\infty}) : K \right]=\left[K(H)\cap K(\mu_{\ell^\infty}) : L \right][L:K] \uguale \ell^j [L:K],
$
hence by \eqref{eq_ChoiceOfm} we have
$
\left[ K(H) \cap K(\mu_{\ell^\infty}) : K \right] \uguale \ell^j \cdot [K(\mu_{\ell^m}):K].
$
Using the obvious equalities (up to bounded constants) $[K(\mu_{\ell^{j+1}}) : K(\mu_\ell)] \uguale [K(\mu_{\ell^j}):K] \uguale \ell^j$
we deduce
\[
\begin{aligned}
\left[ K(H) \cap K(\mu_{\ell^\infty}) : K \right] & \uguale \ell^j \cdot [K(\mu_{\ell^m}):K] \\ & \uguale [K(\mu_{\ell^{j+m}}):K(\mu_{\ell^m})]\cdot [K(\mu_{\ell^m}):K] \\
& = [K(\mu_{\ell^{j+m}}):K].
\end{aligned}
\]
This shows that, if we take $C$ to be the constant implied in the last formula, for all primes $\ell$ and all finite subgroups $H$ of $A[\ell^\infty]$ inequality \eqref{eq_WeakMu1} can be satisfied by taking $n=m+j$, and therefore property $(\mu)_w$ holds for $A$ as claimed.
\end{proof}

\section{Property $(\mu)_s$}
Let $F$ be any field. We start by considering the representation
\begin{equation}\label{eq_representation}
\begin{array}{cccc}
\rho: & \operatorname{GL}_{2,F} \times \operatorname{GL}_{2,F} \times \operatorname{GL}_{2,F} & \to & \operatorname{GSp}_{8,F} \\
& (a,b,c) & \mapsto & a \otimes b \otimes c,
\end{array}
\end{equation}
where we identify $F^8$ with $F^2 \otimes F^2 \otimes F^2$. We equip $F^8$ with the symplectic form $\psi$ given by $\psi_1 \otimes \psi_2 \otimes \psi_3$, where $\psi_i$ is the standard symplectic form on the $i$-th factor $F^2$: the fact that the action of $\operatorname{GL}_{2,F}$ preserves $\psi_i$ (up to a scalar) implies that the representation $\rho$ does indeed land into $\operatorname{GSp}_{8,F}$. 

\begin{definition}\label{def_M}
We let $M_F$ be the image of this representation: it is an $F$-algebraic group that contains the torus of homotheties.
\end{definition}


\begin{remark}\label{rem_GoodReduction}
Consider the $\mathbb{Z}_\ell$-Zariski closure of $M_{\mathbb{Q}_\ell}$ in $\operatorname{GSp}_{8,\mathbb{Z}_\ell}$, call it $\mathcal{M}_{\mathbb{Z}_\ell}$. By definition, $\mathcal{M}_{\mathbb{Z}_\ell}$ coincides with the $\mathbb{Z}_\ell$-Zariski closure of $M_{\mathbb{Q}} \times_{\mathbb{Q}} \mathbb{Q}_\ell$ in $\operatorname{GSp}_{8,\mathbb{Z}_\ell}$, which is smooth over $\mathbb{Z}_\ell$ for almost all $\ell$ because $M_{\mathbb{Q}}$ extends to a smooth scheme over an open subscheme of $\operatorname{Spec} \mathbb{Z}$. It follows that $\mathcal{M}_{\mathbb{Z}_\ell}$ is smooth over $\mathbb{Z}_\ell$ for almost all $\ell$.
\end{remark}


We think the algebraic group $M_F$ as sitting inside $\mathbb{A}^{64}_F$ (the space of $8 \times 8$ matrices over $F$). It is not hard to find polynomials that belong to the ideal defining $M_F$: by construction $\rho$ factors through $\operatorname{GL}_{2,F} \otimes \operatorname{GL}_{2,F} \otimes \operatorname{GL}_{2,F}$, so 
if we let
$
\left(\begin{matrix} B_{11} & B_{12} \\ B_{21} & B_{22}\end{matrix} \right)
$
be any element in $M_F(\overline{F})$ (where every $B_{ij}$ is a $4 \times 4$ matrix), the construction of the tensor product implies that the four matrices $B_{ij}$ are pairwise linearly dependent (notice that this condition is purely algebraic, being given by the vanishing of sufficiently many determinants). 
Likewise, if we write
$
B_{ij}=\left(\begin{matrix} C_{11} & C_{12} \\ C_{21} & C_{22}\end{matrix} \right),
$
 where each $C_{kl}$ is a $2 \times 2$ matrix, we must again have pairwise linear dependence of the $C_{kl}$, and this (being an algebraic condition) is again true for any point in $M_F(\overline{F})$.
Let now $e_1,e_2$ be the standard basis of $F^2$ and write $e_{ijk}=e_i \otimes e_j \otimes e_k$ (with $i,j,k \in \left\{1,2\right\}$) for the corresponding basis of $F^8$. We order these basis vectors as $e_{111},e_{112}$, $e_{121}$, $e_{122},e_{211},e_{212},e_{221},e_{222}$. The form $\psi$ on $F^8$ is then represented by the matrix
\[
\left(\begin{matrix} 0 & 0 & 0 & 0 & 0 & 0 & 0 & 1 \\ 0 & 0 & 0 & 0 & 0 & 0 & -1 & 0 \\ 0 & 0 & 0 & 0 & 0 & -1 & 0 & 0 \\ 0 & 0 & 0 & 0 & 1 & 0 & 0 & 0 \\ 0 & 0 & 0 & -1 & 0 & 0 & 0 & 0 \\ 0 & 0 & 1 & 0 & 0 &0 &0 &0 \\ 0 & 1 & 0 & 0 & 0 & 0 & 0 & 0 \\ -1 & 0 & 0 & 0 & 0 & 0 & 0 & 0 \end{matrix} \right),
\]
and it is immediate to check that $e_{111}, e_{122}, e_{212}, e_{221}$ span a Lagrangian subspace.

\begin{definition}\label{def_SpecialLagrangianSubspace}
Let $F$ be any field. We let $H$ be the subspace of $F^8\cong \left(F^2\right)^{\otimes 3}$ generated by $e_{111}$, $e_{122}$, $e_{212}$, and $e_{221}$.
\end{definition}

We now determine the stabilizer $\Stabilizer$ of $H$ in $M_F\left( \overline{F} \right)$. In matrix terms, an element $t$ of $\Stabilizer$ can be written as
\[
t=\left(\begin{matrix} 1 & \simb & \simb & 0 & \simb & 0 & 0 & \simb \\
                     0 & \simb & \simb & 0 & \simb & 0 & 0 & \simb \\
                     0 & \simb & \simb & 0 & \simb & 0 & 0 & \simb \\
                     0 & \simb & \simb & 1 & \simb & 0 & 0 & \simb \\
                     0 & \simb & \simb & 0 & \simb & 0 & 0 & \simb \\
                     0 & \simb & \simb & 0 & \simb & 1 & 0 & \simb \\
                     0 & \simb & \simb & 0 & \simb & 0 & 1 & \simb \\
                     0 & \simb & \simb & 0 & \simb & 0 & 0 & \simb \end{matrix} \right),
\]
where each entry $\simb$ is a priori any element of $\overline{F}$. We now use the fact that $\Stabilizer \subseteq M_F(\overline{F})$ to show that $\Stabilizer$ is in fact finite.
Write as before $B_{11}$ (resp.~$B_{12}, B_{21}, B_{22}$) for the top-left (resp.~top-right, bottom-left and bottom-right) block of $t$ of size $4 \times 4$. Since $B_{22}$ is nonzero, linear dependence of $B_{22}$ and $B_{12}$ can be expressed as $B_{12}=\alpha B_{22}$ for a certain $\alpha \in \overline{F}$; however, since $B_{22}$ has some nonzero diagonal coefficients while the corresponding diagonal entries of $B_{12}$ vanish, we must have $\alpha=0$ and $B_{12}=0$. The same argument, applied to $B_{21}$ and $B_{11}$, shows that $B_{21}=0$. 
On the other hand, the blocks $B_{11}$ and $B_{22}$ are both nonzero, so there exists a nonzero $\lambda \in \overline{F}^\times$ such that $B_{22}=\lambda B_{11}$: this leads immediately to
\[
t=\left(\begin{matrix} 1 & 0 & 0 & 0 & 0 & 0 & 0 & 0 \\
                       0 & 1/\lambda & 0 & 0 & 0 & 0 & 0 & 0 \\
                       0 & 0 & 1/\lambda & 0 & 0 & 0 & 0 & 0 \\
                       0 & 0 & 0 & 1 & 0 & 0 & 0 & 0 \\
                       0 & 0 & 0 & 0 & \lambda & 0 & 0 & 0 \\
                       0 & 0 & 0 & 0 & 0 & 1 & 0 & 0 \\
                       0 & 0 & 0 & 0 & 0 & 0 & 1 & 0 \\
                       0 & 0 & 0 & 0 & 0 & 0 & 0 & \lambda \end{matrix} \right).%
\]

We now use the second part of our previous remark, namely the fact that the $2 \times 2$ blocks of $B_{11}$ are linearly dependent as well. Comparing the top-left and bottom-right blocks of $B_{11}$ gives the additional condition $\lambda^2=1$, that is, $\lambda=\pm 1$: thus the stabilizer in $M_F(\overline{F})$ of our Lagrangian subspace $H$ consists of exactly two elements, namely the identity and the operator $\operatorname{diag}(1,-1,-1,1,-1,1,1,-1)$ (at least if $\operatorname{char} F \neq 2$: otherwise we have $-1=1$ and the two coincide). This stabilizer is also clearly finite as an algebraic group, since it has only finitely many points over $\overline{F}$.

Notice that this argument actually shows a little more. Let $\mathcal{M}_{\mathbb{Z}_\ell}$ be the $\mathbb{Z}_\ell$-Zariski closure of $M_{\mathbb{Q}_\ell}$ in $\operatorname{GSp}_{8,\mathbb{Z}_\ell}$. Let furthermore $\mathcal{H}$ be the Lagrangian subspace of $\mathbb{F}_\ell^8 \cong \mathbb{F}_\ell^2 \otimes \mathbb{F}_\ell^2 \otimes \mathbb{F}_\ell^2$ given in definition \ref{def_SpecialLagrangianSubspace} (for the field $\mathbb{F}_\ell$): then the stabilizer of $\mathcal{H}$ in $\mathcal{M}_{\mathbb{Z}_\ell}(\overline{\mathbb{F}_\ell})$ has order at most 2. Indeed, all we have used in the above argument is the linear dependence of certain blocks in the matrix representation of the elements of the stabilizer and the fact that the equation $\lambda^2=1$ admits at most 2 solutions in $\overline{F}$: both properties are also true for the points of $\mathcal{M}_{\mathbb{Z}_\ell}$ with values in any integral $\mathbb{Z}_\ell$-algebra (in particular, $\overline{\mathbb{F}_\ell}$). We record this fact in the following
\begin{proposition}\label{prop_FiniteStabilizer}
Let $\ell$ be a prime, $\mathcal{M}_{\mathbb{Z}_\ell}$ be the $\mathbb{Z}_\ell$-Zariski closure of $M_{\mathbb{Q}_\ell}$ in $\operatorname{GSp}_{8,\mathbb{Z}_\ell}$, and $\mathcal{H}$ be the subspace $H$ of definition \ref{def_SpecialLagrangianSubspace} for the field $\mathbb{F}_\ell$. The stabilizer of $\mathcal{H}$ in $\mathcal{M}_{\mathbb{Z}_\ell}(\overline{\mathbb{F}_\ell})$ consists of at most 2 elements.
\end{proposition}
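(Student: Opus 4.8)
The statement is essentially a re-reading of the explicit determination of the stabilizer of $H$ in $M_F(\overline{F})$ carried out just above, so the plan is to replay that computation over $\overline{\mathbb{F}_\ell}$ with $\mathcal{M}_{\mathbb{Z}_\ell}$ in place of $M_F$; the only thing demanding attention is that every algebraic relation used in that computation remains available for the points of $\mathcal{M}_{\mathbb{Z}_\ell}$ with values in an integral $\mathbb{Z}_\ell$-algebra.

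First I would isolate the relations in question. Writing a matrix of $\operatorname{GL}_{8}$ in $2\times 2$ block form $(B_{ij})$ with $4\times 4$ blocks, the condition that the four $B_{ij}$ be pairwise linearly dependent, and likewise the condition that inside each $B_{ij}$ the four $2\times 2$ sub-blocks $C_{kl}$ be pairwise linearly dependent, is cut out by the vanishing of suitable $2\times 2$ minors; these are polynomials with coefficients in $\mathbb{Z}$. As already observed above, every point of $M_{\mathbb{Q}_\ell}(\overline{\mathbb{Q}_\ell})$ is of the form $a\otimes b\otimes c$, so these relations hold identically on $M_{\mathbb{Q}_\ell}$, and hence the corresponding polynomials lie in the ideal $I$ defining $M_{\mathbb{Q}_\ell}$ in $\frac{\mathbb{Q}_\ell[x_{ij},y]}{(\det(x_{ij})y-1)}$. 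Since they can be taken with coefficients in $\mathbb{Z}_\ell$, they lie in $I\cap\frac{\mathbb{Z}_\ell[x_{ij},y]}{(\det(x_{ij})y-1)}$, which by the flatness lemma recalled above is exactly the ideal defining $\mathcal{M}_{\mathbb{Z}_\ell}$. Therefore they vanish on $\mathcal{M}_{\mathbb{Z}_\ell}(R)$ for every $\mathbb{Z}_\ell$-algebra $R$, in particular on $\mathcal{M}_{\mathbb{Z}_\ell}(\overline{\mathbb{F}_\ell})$.

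Next I would take an arbitrary $t\in\mathcal{M}_{\mathbb{Z}_\ell}(\overline{\mathbb{F}_\ell})$ fixing $\mathcal{H}$ pointwise and run the argument. Fixing $\mathcal{H}$ pointwise means precisely that the columns of $t$ indexed by $e_{111},e_{122},e_{212},e_{221}$ are the corresponding standard basis vectors, which forces $t$ to have exactly the block shape exhibited above for $M_F$; this step is pure linear algebra relative to the fixed coordinate Lagrangian $\mathcal{H}$ and does not see the characteristic. Applying the block relations exactly as in the characteristic-zero case: the diagonal entries of $B_{22}$ (respectively $B_{11}$) that are forced to equal $1$ occupy positions where the corresponding entries of $B_{12}$ (respectively $B_{21}$) vanish, so linear dependence forces $B_{12}=B_{21}=0$, while $B_{22}=\lambda B_{11}$ for some $\lambda\in\overline{\mathbb{F}_\ell}^\times$. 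Feeding in the linear dependence of the $2\times 2$ sub-blocks of $B_{11}$ then yields, just as above, that $t=\operatorname{diag}(1,1/\lambda,1/\lambda,1,\lambda,1,1,\lambda)$ subject to $\lambda^2=1$. Since $\overline{\mathbb{F}_\ell}$ is a field, $\lambda^2=1$ has at most two solutions, so there are at most two possibilities for $t$, which is the assertion.

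The only genuinely delicate point — and the one I would be most careful about — is the integrality step of the second paragraph: one must be sure that the relations invoked lie in the ideal of the integral model $\mathcal{M}_{\mathbb{Z}_\ell}$, not merely in that of its generic fibre. This is exactly what the flatness lemma provides (the ideal of the $\mathbb{Z}_\ell$-Zariski closure is the contraction of the ideal of $M_{\mathbb{Q}_\ell}$), and it applies here because the relevant minors can be chosen with integer coefficients. Everything else in the argument — the block-shape deduction and the resolution of $\lambda^2=1$ — involves no division and so transfers unchanged from $\overline{\mathbb{Q}}$ to $\overline{\mathbb{F}_\ell}$; in particular the case $\ell=2$, in which the two candidate elements coincide, only makes the bound easier.
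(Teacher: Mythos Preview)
Your proposal is correct and follows essentially the same approach as the paper: replay the block-by-block computation of the stabilizer over $\overline{\mathbb{F}_\ell}$, using that the minor-vanishing relations expressing pairwise linear dependence of the blocks hold on $\mathcal{M}_{\mathbb{Z}_\ell}$. If anything, you are more explicit than the paper about the integrality step, invoking the flatness lemma to justify that these integer-coefficient relations lie in the ideal of the $\mathbb{Z}_\ell$-closure, whereas the paper simply asserts that the relations used are valid for points of $\mathcal{M}_{\mathbb{Z}_\ell}$ with values in any integral $\mathbb{Z}_\ell$-algebra.
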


\subsection{Mumford's examples, and the proof of theorem \ref{thm_ms}}\label{subsect_MumfordExample}
We now recall the construction given by Mumford in \cite{MR0248146}. Suppose we are given the data of a totally real cubic number field $F$ and of a central simple division algebra $D$ over $F$ satisfying:
\begin{enumerate}
\item $\operatorname{Cor}_{F/\mathbb{Q}}(D)=M_8(\mathbb{Q})$;
\item $D \otimes_\mathbb{Q} \mathbb{R} \cong \mathbb{H} \oplus \mathbb{H} \oplus M_2(\mathbb{R})$.
\end{enumerate}

Being a division algebra, $D$ is equipped with a natural involution $x \mapsto \overline{x}$; let $G$ be the $\mathbb{Q}$-algebraic group whose $\mathbb{Q}$-points are given by $\left\{ x \in D^* \bigm\vert x\overline{x}=1 \right\}$. Mumford constructed in \cite{MR0248146} an abelian variety of dimension 4 with trivial endomorphism ring and Hodge group equal to $G$ (in fact, he constructed a Shimura curve parametrizing abelian fourfolds whose Hodge group is contained in $G$, and showed that every sufficiently generic fiber has exactly $G$ as its Hodge group). By specialization, there exists a principally polarized abelian fourfold $A$ defined over a number field $L$ and such that $\operatorname{Hg}(A) \cong G$; since $\operatorname{Hg}(A)$ is as small as it is possible for an abelian fourfold with no additional endomorphisms, the Mumford-Tate conjecture is known to hold for $A$ (cf.~\cite{Moonen95hodgeand}). By theorem \ref{thm_InclusionMT} there is a finite extension $K$ of $L$ such that, if we denote by $G_\ell$ the image of the mod-$\ell$ representation $\abGal{K} \to \operatorname{Aut} A[\ell]$, then we have $G_\ell \subseteq \MT(A)(\mathbb{F}_\ell)$ for all primes $\ell$.
On the other hand, the equality $\operatorname{Cor}_{F/\mathbb{Q}}(D) = M_8(\mathbb{Q})$ implies the existence of a (``norm") map $N:D^* \to \operatorname{GL}_8(\mathbb{Q})$, and Mumford's construction is such that the action of $G(\mathbb{Q})=D^*$ on $V:=H_1(A(\mathbb{C}),\mathbb{Q}) \cong \mathbb{Q}^8$ is given exactly by $N$. 
Furthermore, it is also known that $N$ is a $\mathbb{Q}$-form of the $\mathbb{R}$-representation
$
G(\mathbb{R}) \cong \operatorname{SL}_2(\mathbb{R}) \times \operatorname{SU}_2(\mathbb{R})^2 \to \operatorname{Sp}_8(\mathbb{R})
$
coming from the tensor product of the standard representation of $\operatorname{SL}_2(\mathbb{R})$ by the unique 4-dimensional faithful orthogonal representation $\operatorname{SU}_2(\mathbb{R})^2 \to \operatorname{SO}_4(\mathbb{R})$. In particular, by extension of scalars to $\mathbb{C}$ we see that the action of $G(\mathbb{C}) \cong \operatorname{SL}_2(\mathbb{C})^3$ on $V_\mathbb{C}$ is given by the representation $\rho$ of the previous paragraph (restricted to $\operatorname{SL}_2(\mathbb{C})^3$).


\begin{lemma}\label{lemma_Inclusion}
Let $\ell$ be a prime such that $G \times_{\mathbb{Q}} \mathbb{Q}_\ell$ is split. Then (up to choosing a suitable identification $T_\ell(A) \otimes \mathbb{Q}_\ell \cong \mathbb{Q}_\ell^8$) we have $\MT(A)\times_{\mathbb{Z}} \mathbb{Q}_\ell = M\times_{\mathbb{Q}} \mathbb{Q}_\ell$, where $M=M_{\mathbb{Q}}$ is the algebraic group of definition \ref{def_M} for the field $\mathbb{Q}$.
\end{lemma}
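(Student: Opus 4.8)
The plan is to reduce the asserted identity to a statement about the $\mathbb{Q}_\ell$-representation underlying Mumford's norm map $N$, and then prove that statement by descending the known complex picture to $\mathbb{Q}_\ell$ using the splitting hypothesis. First I would record the structure of the right-hand side. By Definition~\ref{def_M} the group $M_{\mathbb{Q}_\ell}$ is the scheme-theoretic image of $\rho\colon\operatorname{GL}_{2,\mathbb{Q}_\ell}\times\operatorname{GL}_{2,\mathbb{Q}_\ell}\times\operatorname{GL}_{2,\mathbb{Q}_\ell}\to\operatorname{GSp}_{8,\mathbb{Q}_\ell}$; since forming the image of a homomorphism of affine algebraic groups commutes with field extension, $M_{\mathbb{Q}_\ell}=M_{\mathbb{Q}}\times_{\mathbb{Q}}\mathbb{Q}_\ell$. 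Moreover $\rho$ carries the derived subgroup $\operatorname{SL}_{2,\mathbb{Q}_\ell}^{3}$ of $\operatorname{GL}_{2,\mathbb{Q}_\ell}^{3}$ onto the derived subgroup $M^{\mathrm{der}}_{\mathbb{Q}_\ell}$, and it carries the diagonally central $\mathbb{G}_m^{3}\subseteq\operatorname{GL}_{2,\mathbb{Q}_\ell}^{3}$ onto the torus of homotheties of $\mathbb{Q}_\ell^{8}$, because $\rho(\lambda I_2,\mu I_2,\nu I_2)=\lambda\mu\nu\,I_8$; hence $M_{\mathbb{Q}_\ell}$ is generated by the homotheties together with $\rho(\operatorname{SL}_{2,\mathbb{Q}_\ell}^{3})$.

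On the Mumford--Tate side I would use the standard fact that $\MT(A)$ is generated by $\Hg(A)$ together with the torus $\mathbb{G}_m$ of homotheties of $V$ (this comes from the decomposition $\mathbb{S}=\mathbb{G}_{m,\mathbb{R}}\cdot U(1)$, with $U(1)\subseteq\mathbb{S}$ the norm-one subtorus, whose first factor maps under $h$ to the scalars). After extending scalars this yields $\MT(A)\times_{\mathbb{Z}}\mathbb{Q}_\ell=\mathbb{G}_m\cdot\Hg(A)_{\mathbb{Q}_\ell}$, where by Mumford's construction $\Hg(A)_{\mathbb{Q}_\ell}=G_{\mathbb{Q}_\ell}$ acts on $V\otimes\mathbb{Q}_\ell$ through $N\otimes\mathbb{Q}_\ell$; via the Betti--étale comparison isomorphism, which is equivariant for the two realizations of $\MT(A)$, this is the same as the action on $T_\ell(A)\otimes\mathbb{Q}_\ell$, so it is harmless to work with $V\otimes\mathbb{Q}_\ell$ throughout. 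Thus everything reduces to producing a $\mathbb{Q}_\ell$-linear isomorphism $f\colon T_\ell(A)\otimes\mathbb{Q}_\ell\cong\mathbb{Q}_\ell^{8}=(\mathbb{Q}_\ell^{2})^{\otimes 3}$ under which the action of $G_{\mathbb{Q}_\ell}$ becomes exactly $\rho(\operatorname{SL}_{2,\mathbb{Q}_\ell}^{3})$: granting this, conjugation by $f$ carries $\MT(A)\times_{\mathbb{Z}}\mathbb{Q}_\ell=\mathbb{G}_m\cdot G_{\mathbb{Q}_\ell}$ onto $\mathbb{G}_m\cdot\rho(\operatorname{SL}_{2,\mathbb{Q}_\ell}^{3})=M_{\mathbb{Q}_\ell}=M\times_{\mathbb{Q}}\mathbb{Q}_\ell$ (a linear map sends homotheties to homotheties), and $f$ is the ``suitable identification'' of the statement.

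The core step is therefore the construction of $f$. Since $G$ is semisimple and simply connected of type $A_1^{3}$ and $G\times_{\mathbb{Q}}\mathbb{Q}_\ell$ is split, we have $G_{\mathbb{Q}_\ell}\cong\operatorname{SL}_{2,\mathbb{Q}_\ell}^{3}$; fix such an isomorphism $\iota$ and let $W$ be the $\mathbb{Q}_\ell$-representation $(\mathbb{Q}_\ell^{2})^{\otimes 3}$ of $G_{\mathbb{Q}_\ell}$ obtained by pulling the tensor product of the three standard representations back along $\iota$. I claim $V\otimes\mathbb{Q}_\ell\cong W$ as $G_{\mathbb{Q}_\ell}$-modules. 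Both are absolutely irreducible --- for $V\otimes\mathbb{Q}_\ell$ this is because over $\mathbb{C}$ it is, by Mumford's construction, the tensor product of the three standard representations of $\operatorname{SL}_2(\mathbb{C})^{3}$, hence irreducible over $\mathbb{C}$ and therefore absolutely irreducible --- so $\operatorname{Hom}_{G_{\mathbb{Q}_\ell}}(V\otimes\mathbb{Q}_\ell,W)$ is at most one-dimensional, and it is one-dimensional, with every nonzero element an isomorphism, precisely when $V\otimes\overline{\mathbb{Q}_\ell}\cong W\otimes\overline{\mathbb{Q}_\ell}$ as $G_{\overline{\mathbb{Q}_\ell}}$-modules. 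That last isomorphism I would obtain by transport of structure from the complex picture: $G$, its module $V$, and an identification $G_{\overline{\mathbb{Q}}}\cong\operatorname{SL}_{2,\overline{\mathbb{Q}}}^{3}$ are all defined over $\overline{\mathbb{Q}}$; the known isomorphism over $\mathbb{C}$ descends to $\overline{\mathbb{Q}}$ by the same $\operatorname{Hom}$-dimension argument; one then base-changes along any embedding $\overline{\mathbb{Q}}\hookrightarrow\overline{\mathbb{Q}_\ell}$, using that the tensor product of the standard representations is unchanged up to isomorphism under the automorphisms of $\operatorname{SL}_2^{3}$, so that the precise choices of identifications are immaterial. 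Conjugation by the resulting $f$ then carries $G_{\mathbb{Q}_\ell}$ onto $\rho(\iota(G_{\mathbb{Q}_\ell}))=\rho(\operatorname{SL}_{2,\mathbb{Q}_\ell}^{3})$, as required.

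The main obstacle is exactly this descent, that is, turning the description of the $G$-module $V$ --- which we only know after base change to $\mathbb{C}$ --- into a $\mathbb{Q}_\ell$-rational one; it is also the only place where the hypothesis that $G\times_{\mathbb{Q}}\mathbb{Q}_\ell$ be split is used in an essential way. If $G_{\mathbb{Q}_\ell}$ were a nonsplit form, the image of $\Hg(A)_{\mathbb{Q}_\ell}$ in $\operatorname{GL}_{8,\mathbb{Q}_\ell}$ would be a nonsplit inner form of $\rho(\operatorname{SL}_{2,\mathbb{Q}_\ell}^{3})$ and could differ from the split group $\rho(\operatorname{SL}_{2,\mathbb{Q}_\ell}^{3})$ occurring in $M_{\mathbb{Q}_\ell}$, so the conclusion would genuinely fail. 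Everything else is either standard structure theory of Mumford--Tate groups or a direct computation with $\rho$. (One may additionally note that $V\otimes\mathbb{Q}_\ell$ being absolutely irreducible and symplectic forces $f$ to intertwine the polarization form with a scalar multiple of $\psi$, so the equality actually takes place inside $\operatorname{GSp}_{8,\mathbb{Q}_\ell}$ --- but this refinement is not needed for the statement.)
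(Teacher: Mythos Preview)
Your argument is correct and follows essentially the same approach as the paper: both identify the norm map $N$ over $\mathbb{Q}_\ell$ with the tensor-product representation $\rho$ restricted to $\operatorname{SL}_{2,\mathbb{Q}_\ell}^{3}$, then combine this with the decomposition $\MT(A)=\mathbb{G}_m\cdot\Hg(A)$ and the fact that $M$ contains the homotheties. The only differences are cosmetic: the paper asserts the identification of the norm map with $\rho$ directly (where you supply a careful descent-from-$\mathbb{C}$ justification via absolute irreducibility), and the paper concludes via the inclusion $\MT(A)_{\mathbb{Q}_\ell}\subseteq M_{\mathbb{Q}_\ell}$ plus a dimension count, whereas you exhibit both sides as equal to $\mathbb{G}_m\cdot\rho(\operatorname{SL}_{2,\mathbb{Q}_\ell}^{3})$.
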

\begin{proof}
The morphism $G \to \operatorname{Sp}_{8,\mathbb{Q}}$ is given by the norm map, and if $G \times_{\mathbb{Q}} \mathbb{Q}_\ell$ is split (hence isomorphic to $\operatorname{SL}_{2,\mathbb{Q}_\ell}^3$) the norm map is exactly
\[
\begin{array}{cccc}
\rho : & \operatorname{SL}_{2,\mathbb{Q}_\ell}^3 & \to & \operatorname{Sp}_{8,\mathbb{Q}_\ell} \\
& (a,b,c) & \mapsto & a \otimes b \otimes c;
\end{array}
\]
it follows that $M \times_{\mathbb{Q}} \mathbb{Q}_\ell$ contains $\Hg(A) \times_{\mathbb{Q}} \mathbb{Q}_\ell$ (as algebraic groups). On the other hand, $\MT(A)$ is the almost-direct product of $\Hg(A)$ by the homotheties torus $\mathbb{G}_m$, and we know that $M$ also contains $\mathbb{G}_m$. This proves that we have $\MT(A) \times \mathbb{Q}_\ell \subseteq M \times \mathbb{Q}_\ell$, and since the two groups have the same dimension the inclusion must be an equality.
\end{proof}

Extend now $M$ and $G$ to group schemes over $\mathbb{Z}$ by taking their $\mathbb{Z}$-Zariski closure in their respective ambient spaces; there is an open subscheme $\operatorname{Spec} \mathbb{Z}\left[ \frac{1}{S} \right]$ of $\operatorname{Spec} \mathbb{Z}$ over which $M, \MT(A)$ and $G$ are all smooth. Consider the family $\mathcal{F}$ of primes $\ell$ unramified in $K$, such that $G$ splits over $\mathbb{Q}_\ell$, and which do not divide $S$. We claim that $\mathcal{F}$ is infinite. Indeed, for $G$ to be split over $\mathbb{Q}_\ell$ it is enough that the root datum of $G$ be unramified at $\ell$ and that the Frobenius at $\ell$ act trivially on it, which -- by Chebotarev's theorem -- is the case for a positive-density set of primes (the action of $\abGal{\mathbb{Q}}$ on the root datum of $G$ factors through a finite quotient): it is then clear that $\mathcal{F}$ is infinite, because only finitely many primes divide $S$ or the discriminant of $K$.
Pick now any $\ell$ in $\mathcal{F}$ and let $\mathcal{M}=M \times_{\mathbb{Z}} \mathbb{Z}_\ell$. The definition of $\mathcal{F}$ implies that $\mathcal{M}$ is a smooth $\mathbb{Z}_\ell$-model of $M \times_{\mathbb{Z}} \mathbb{Q}_\ell=M_{\mathbb{Q}_\ell}$, and by lemma \ref{lemma_Inclusion} we have $\MT(A)\times_{\mathbb{Z}} \mathbb{Z}_\ell=\mathcal{M}$, because both groups can be obtained as the $\mathbb{Z}_\ell$-Zariski closure of the same generic fiber. In particular, we see that $G_\ell$ is contained in $\mathcal{M}(\mathbb{F}_\ell)=\MT(A)(\mathbb{F}_\ell)$.
Take now $\mathcal{H} \subseteq A[\ell]$ to be the Lagrangian subspace of definition \ref{def_SpecialLagrangianSubspace} (for the field $\mathbb{F}_\ell$). The field $K(\mathcal{H})$ is clearly contained in $K(A[\ell])$, so in order to describe $K(\mathcal{H})$ it suffices to describe $\operatorname{Gal}\left(K(A[\ell])/K(\mathcal{H}) \right)$, that is, the stabilizer of $\mathcal{H}$ in $G_\ell$; as $G_\ell$ is contained in $\mathcal{M}(\mathbb{F}_\ell)$, this stabilizer is certainly contained in the stabilizer of $\mathcal{H}$ in $\mathcal{M}(\mathbb{F}_\ell)$, which in turn consists of at most two elements by proposition \ref{prop_FiniteStabilizer}. We have thus proved that the index $[K(A[\ell]):K(\mathcal{H})]$ is at most $2$, and since $K(\mu_\ell)$ is contained in $K(A[\ell])$ by the properties of the Weil pairing (recall that $A$ is principally polarized) we have
\[
\left[K(\mathcal{H}) \cap K(\mu_{\ell^\infty}) : K \right] \geq \frac{1}{2} \left[K(A[\ell]) \cap K(\mu_{\ell^\infty}) : K \right] \geq \frac{1}{2} \left[ K(\mu_\ell):K \right] = \frac{\ell-1}{2},
\]
where the last equality follows from the fact that $\ell$ is unramified in $K$. We then see that property $(\mu)_s$ does not hold for Mumford's example: indeed, $\mathcal{H}$ is Lagrangian, hence we have $m_1(\mathcal{H})=0$; but if property $(\mu)_s$ held for $A/K$, then (for some $C$) the inequality
\[
\frac{\ell-1}{2} \leq [K(\mathcal{H}) \cap K(\mu_{\ell^\infty}) : K ] \leq C \left[ K(\mu_{\ell^{m_1(\mathcal{H})}}) : K \right] = C
\]
would be satisfied by all the primes in our infinite family $\mathcal{F}$, and this is clearly absurd. This establishes theorem \ref{thm_ms}.


\medskip
\noindent\textbf{Acknowledgments.} I am grateful to Nicolas Ratazzi for attracting my interest to the problem considered in this paper. I thank Antonella Perucca for useful discussions and for pointing out Example \ref{ex_SelfProducts}, and the anonymous referee for suggesting that theorem \ref{thm_mw} could be made independent of the truth of the Mumford-Tate conjecture. The author gratefully acknowledges financial support from the Fondation Mathématique Jacques Hadamard.

\bibliography{Biblio}{}
\bibliographystyle{plain}

\break

\end{document}